\newtheorem{theorem}{Theorem}[section]
\newtheorem{definitio}[theorem]{Definition}
\newenvironment{definition}{\begin{definitio} \rm }{\end{definitio}}
\newtheorem{lemma}[theorem]{Lemma}
\newtheorem{proposition}[theorem]{Proposition}
\newtheorem{corollary}[theorem]{Corollary}
\DeclareMathOperator{\Cof}{Cof}
\newcommand{\fl}[1]{\textrm{\raisebox{-0.12cm}{\begin{tikzpicture}\draw (0.5,0) node{$\longrightarrow$}; \draw (0.5,0.17) node{$\scriptstyle{#1}$};
  \end{tikzpicture}}}}
\newcommand{\lbp}{\textrm{{\Large (}}}
\newcommand{\rbp}{\textrm{{\Large )}}}
\newcommand{\proofof}[1]{\noindent{\textit{Proof of #1.}}}
\newcommand{\fin}{\hfill$\square$\bigskip}
\newcommand{\Oplus}[1]{\mathop{\bigoplus}\limits_{#1}}
\newcommand{\N}{\mathbb{N}}
\newcommand{\Z}{\mathbb{Z}}
\newcommand{\Zt}{\mathbb{Z}[t^{\pm1}]}
\newcommand{\Q}{\mathbb{Q}}
\newcommand{\Qt}{\mathbb{Q}[t^{\pm1}]}
\newcommand{\CS}{\mathcal{S}}
\newcommand{\Lag}{\mathcal{L}}
\newcommand{\Al}{\mathcal{A}}
\newcommand{\fs}{\underline{f}}
\newcommand{\bs}{\underline{b}}
\newcommand{\etas}{\underline{\eta}}
\title{Rational Blanchfield forms, S-equivalence, and null LP-surgeries}
\author{Delphine Moussard}
\date{ }
\begin{document}

\maketitle

\begin{abstract}
Null Lagrangian-preserving surgeries are a generalization of the Garoufalidis and Rozansky null-moves, that these authors introduced 
to study the Kricker lift of the Kontsevich integral, in the setting of pairs $(M,K)$ 
composed of a rational homology sphere $M$ and a null-homologous knot $K$ in $M$. They are defined as replacements of null-homologous 
rational homology handlebodies of $M\setminus K$ by other such handlebodies with identical Lagrangian. 
A null Lagrangian-preserving surgery induces a canonical isomorphism between the Alexander $\Qt$-modules of the involved pairs, which 
preserves the Blanchfield form. Conversely, we prove that a fixed isomorphism between Alexander $\Qt$-modules which preserves 
the Blanchfield form can be realized, up to multiplication by a power of $t$, by a finite sequence of null Lagrangian-preserving 
surgeries. We also prove that such classes of isomorphisms can be realized by rational S-equivalences. 
In the case of integral homology spheres, we prove similar realization results for a fixed isomorphism between Alexander 
$\Zt$-modules.

\ \\
MSC: 57M25 57M27 57N10 57N65

\ \\
Keywords: Alexander module; Blanchfield form; equivariant linking pairing; homology sphere; homology handlebody; Lagrangian-preserving surgery; 
Seifert matrix; S-equivalence; lift of the Kontsevich integral; null-move; Euler degree of the Kontsevich integral. 
\end{abstract}

\tableofcontents

    \section{Introduction}

 \subsection{Context}

In \cite{GR}, Garoufalidis and Rozansky studied the rational vector space generated by the pairs $(M,K)$ modulo orientation-preserving 
homeomorphism, where $M$ is an {\em integral homology 3-sphere} ($\Z$HS), that is an oriented compact 3-manifold which has the same homology 
with integral coefficients as $S^3$, and $K$ is a knot in $M$. They defined a filtration on this space by means of null-moves, that are surgeries 
on claspers (see Garoufalidis, Goussarov and Polyak \cite{GGP}, and Habiro \cite{Hab}) whose leaves are trivial in $H_1(M\setminus K;\Z)$. 
They studied this filtration with the Kricker 
lift of the Kontsevich integral defined in \cite{GK}. The first step in the study of this filtration is the determination of the classes 
of pairs $(M,K)$ up to null-moves. As a corollary of results of Matveev \cite{Mat}, Naik and Stanford \cite{NS}, and Trotter \cite{Trotter}, 
Garoufalidis and Rozansky established that two pairs $(M,K)$ as above can be obtained from one another by a finite sequence of null-moves 
if and only if they admit S-equivalent Seifert matrices, and if and only if they have isomorphic integral Alexander modules and 
Blanchfield forms.

In this article, we consider pairs $(M,K)$, where $M$ is a {\em rational homology 3-sphere} 
($\Q$HS), {\em i.e.} an oriented compact 3-manifold which has the same homology with rational coefficients as $S^3$, 
and $K$ is a {\em null-homologous knot} in $M$, {\em i.e.} a knot whose class in $H_1(M;\Z)$ is trivial. 
We define the null Lagrangian-preserving surgeries, which play the role played by the null-moves in the integral case. 
We prove that the classes of pairs $(M,K)$ modulo null Lagrangian-preserving surgeries are characterized by the classes of rational 
S-equivalence of their Seifert matrices, or by the isomorphism classes of their rational Alexander modules equipped with their 
Blanchfield forms. Furthermore, we prove that a fixed isomorphism between rational Alexander modules which preserves 
the Blanchfield form can be realized, up to multiplication by a power of $t$, by a finite sequence of null Lagrangian-preserving 
surgeries. Null Lagrangian-preserving surgeries define a filtration of the rational vector space generated by pairs $(M,K)$ 
modulo orientation-preserving homeomorphism. This article is a first step in the study of this filtration, that is useful in the study 
of equivariant finite type knot invariants.

In \cite{GR}, Garoufalidis and Rozansky characterized the classes of pairs $(M,K)$, made of a $\Z$HS $M$ and a knot $K\subset M$, 
modulo null-moves, but they did not treat the question of the realization of a fixed isomorphism. In this article, we consider 
integral null Lagrangian-preserving surgeries, which generalize the null-moves, and define the same filtration of the vector 
space generated by all pairs $(M,K)$ modulo orientation-preserving homeomorphism. We prove that a fixed isomorphism between integral 
Alexander modules which preserves the Blanchfield form can be realized, up to multiplication by a power of $t$, by a finite sequence 
of integral null Lagrangian-preserving surgeries. Garoufalidis and Rozansky used their work to determine the graded space associated 
with the above filtration in the case of a trivial Alexander module. In order to study the general case of a maybe non trivial Alexander 
module, the realization result is essential.

When it does not seem to cause confusion, we use the same notation for a curve and its homology class. 

   \subsection{Alexander module and Blanchfield form}

We first recall the definition of the Alexander module and of the Blanchfield form. 
Let $(M,K)$ be a {\em $\Q$SK-pair}, that is a pair made of a rational homology sphere $M$ and a null-homologous knot $K$ in $M$. 
Let $T(K)$ be a tubular neighborhood of $K$. The \emph{exterior} of $K$ is 
$X=M\setminus Int(T(K))$. Consider the natural projection $\pi : \pi_1(X) \to \frac{H_1(X;\Z)}{torsion} \cong \Z$,
and the covering map $p : \tilde{X} \to X$ associated with its kernel. The covering $\tilde{X}$ is the \emph{infinite cyclic covering} 
of $X$. The automorphism group of the covering, $Aut(\tilde{X})$, is isomorphic to $\Z$. It acts on 
$H_1(\tilde{X};\Q)$. Denoting the action of a generator $\tau$ of $Aut(\tilde{X})$ as the multiplication by $t$, 
we get a structure of $\Qt$-module on 
$\Al(K)=H_1(\tilde{X};\Q)$. This $\Qt$-module is the \emph{Alexander module} of $K$. It is a torsion $\Qt$-module. 
\begin{definition}
 Let $(M,K)$ and $(M',K')$ be $\Q$SK-pairs. Let $\xi: \Al(K)\to \Al(K')$ be an isomorphism. 
The {\em $\tau$-class} of $\xi$ is the set of the isomorphisms $\xi\circ m_k$ for $k\in\Z$, where $m_k$ is the multiplication by $t^k$.
\end{definition}
Note that the $\tau$-class of $\xi$ is composed of all the isomorphisms that can be obtained from $\xi$ by composition by isomorphisms 
of $\Al(K)$ or $\Al(K')$ induced by automorphisms of the underlying coverings.

If $(M,K)$ is a {\em $\Z$SK-pair}, {\em i.e.} if $M$ is a $Z$HS, define the {\em integral Alexander module} $\Al_\Z(K)$ as the $\Zt$-module 
$H_1(\tilde{X};\Z)$, similarly. It is a torsion $\Zt$-module, but we will see in Section \ref{secZ} that it has no $\Z$-torsion. 
Hence it can be viewed as a $\Zt$-submodule of $\Al(K)$. Define as above the {\em $\tau$-class} of an isomorphism between integral 
Alexander modules. 

On the Alexander module $\Al(K)$, one can define the \emph{Blanchfield form}, or \emph{equivariant linking pairing},  
$\phi_K : \Al(K)\times\Al(K) \to \frac{\Q(t)}{\Qt}$, as follows. First define the equivariant linking number of two knots.
\begin{definition}
 Let $J_1$ and $J_2$ be two knots in $\tilde{X}$ such that $J_1\cap \tau^k(J_2)=\emptyset$ for all $k\in\Z$.
 Let $\delta(t)$ be the annihilator of $\Al(K)$.
 Then $\delta(\tau)J_1$ and $\delta(\tau)J_2$ are rationally null-homologous links. The \emph{equivariant linking number} of $J_1$ and $J_2$ is 
 $$lk_e(J_1,J_2)=\frac{1}{\delta(t)\delta(t^{-1})}\sum_{k\in\Z}lk(\delta(\tau)J_1,\tau^k(\delta(\tau)J_2))t^k.$$
\end{definition}
One can easily see that $lk_e(J_1,J_2)\in\frac{1}{\delta(t)}\Qt$, $lk_e(J_2,J_1)(t)=lk_e(J_1,J_2)(t^{-1})$, and 
$lk_e(P(\tau)J_1,Q(\tau)J_2)(t)=P(t)Q(t^{-1})lk_e(J_1,J_2)(t)$.
Now, if $\gamma$ (resp. $\eta$) is the homology class of $J_1$ (resp. $J_2$) in $\Al(K)$, define $\phi_K(\gamma,\eta)$ by:
$$\phi_K(\gamma,\eta)=lk_e(J_1,J_2)\ mod\ \Qt.$$
Extend $\phi_K$ to $\Al(K)\times\Al(K)$ by $\Q$-bilinearity. 
The Blanchfield form is hermitian: $$\phi_K(\gamma,\eta)(t)=\phi_K(\eta,\gamma)(t^{-1}) \quad\textrm{ and }\quad 
\phi_K(P(t)\gamma,Q(t)\eta)(t)=P(t)Q(t^{-1})\,\phi_K(\gamma,\eta)(t),$$ for all $\gamma,\eta\in\Al(K)$ and all $P,Q\in\Qt$. 
Moreover, it is non degenerate (see Blanchfield \cite{Bla}) : $\phi_K(\gamma,\eta)=0$ for all $\eta\in\Al(K)$ implies $\gamma=0$.

  \subsection{Seifert matrices}

Let $(M,K)$ be a $\Q$SK-pair. 
Let $\Sigma$ be a {\em Seifert surface} of $K$, {\em i.e.} a compact connected oriented surface in $M$ such that $\partial \Sigma=K$. 
Such a surface exists since $K$ is null-homologous. Let $g$ be the genus of $\Sigma$. Let $(f_i)_{1\leq i\leq 2g}$ be a symplectic basis 
of $H_1(\Sigma;\Z)$, {\em i.e.} a basis such that the matrix of the intersection form in $(f_i)_{1\leq i\leq 2g}$ 
is $-J$, where $J$ is made of blocks
$\begin{pmatrix} 0 & -1 \\ 1 & 0 \end{pmatrix}$ on the diagonal, and zeros elsewhere. 
The {\em Seifert matrix} of $K$ associated with $\Sigma$ and $(f_i)_{1\leq i\leq 2g}$ 
is the matrix $V\in\mathcal{M}_{2g}(\Q)$ defined by $V_{ij}=lk(f_i,f_j^+)$, where $f_j^+$ is a push-off of $f_j$ in the direction 
of the positive normal of $\Sigma$. This matrix satisfies $V-V^t=J$, where $V^t$ denotes the transpose of $V$. Any rational (resp. integral) matrix 
with this property will be called a {\em Seifert matrix} (resp. an {\em integral Seifert matrix}). In Section \ref{sectop}, we prove 
that any such matrix is indeed the Seifert matrix of a $\Q$SK-pair $(M,K)$, and the Seifert matrix of a $\Z$SK-pair if the matrix is integral. 

Given the Seifert matrix $V$, one can compute the Alexander module $\Al(K)$ and the Blanchfield form $\phi_K$. 
Construct a surface $\hat{\Sigma}$ by adding a band to $\Sigma$ along $K$, so that $\hat{\Sigma}$ is homeomorphic to $\Sigma$ 
and contains $\Sigma$ and $K$ in its interior. Let $T(\Sigma)=\hat{\Sigma}\times[-1,1]$ be a tubular neighborhood of $\Sigma$. 
For $1\leq i \leq 2g$, let $e_i\subset(Int(T(\Sigma))\setminus\Sigma)$ be a meridian of $f_i$. 
The module $\Al(K)$ can be presented as: 
$$\Al(K)=\frac{\bigoplus_{1\leq i\leq 2g}\Qt b_i}{\bigoplus_{1\leq j\leq 2g}\Qt\partial S_j},$$ 
where the $b_i$ are lifts of the $e_i$ in the infinite cyclic covering $\tilde{X}$, and the $S_j$ are lifts of the $f_j\times [-1,1]$. 
Set $f_j^+=f_j\times\{1\}$ and $f_j^-=f_j\times\{-1\}$. Assume the $b_i$ are all chosen in the same copy of $M\setminus \Sigma$. 
For $1\leq j\leq 2g$, let $\tilde{f}_j^+$ and $\tilde{f}_j^-$ be lifts of $f_j^+$ and $f_j^-$ in the same copy of $M\setminus \Sigma$ as the $b_i$. 
Assume the $S_j$ are chosen so that $\partial S_j=t\tilde{f}_j^+-\tilde{f}_j^-$. Then $\partial S_j=\sum_{1\leq i\leq 2g} (tV-V^t)_{ij}b_i$, 
hence $tV-V^t$ is a presentation matrix of $\Al(K)$ (see \cite[Chapter 6]{Lick}). 
Moreover, we have $lk_e(\partial S_j,b_k)=(1-t)\delta_{kj}$. Using the expression of $\partial S_j$ in terms of the $b_i$, 
it follows that the form $\phi_K$ is given by $\phi_K(b_i,b_k)=(1-t)((tV-V^t)^{-1})_{ki}\ mod\ \Qt$ (see Kearton \cite[\S 8]{kearton}). 
If $(M,K)$ is a $\Z$SK-pair, then $V$ is integral, and the same construction shows that $tV-V^t$ is a presentation matrix of the $\Zt$-module 
$\Al_\Z(K)$. 

A {\em $\Q$SK-system} is a quintuple $(M,K,\Sigma,\fs,V)$ where $(M,K)$ is a $\Q$SK-pair, $\Sigma$ is a Seifert surface of $K$ in $M$, 
$\fs=(f_i)_{1\leq i\leq 2g}$ is a symplectic basis of $H_1(\Sigma;\Z)$, and $V$ is the associated Seifert matrix. 
Given a $\Q$SK-system, the associated family $(b_i)_{1\leq i\leq 2g}$ of generators of $\Al(K)$ is determined up to 
multiplication of the whole family by $t^k$ for some integer $k$. 
A {\em $\Z$SK-system} $(M,K,\Sigma,\fs,V)$ is a $\Q$SK-system such that $M$ is a $\Z$HS. 

  \subsection{S-equivalence}

\begin{definition}
 A \emph{row enlargement} of a matrix $V\hspace{-2pt}\in\hspace{-0.6pt}\mathcal{M}_{2g}(\Q)$ is a matrix 
$W\hspace{-2pt}=\hspace{-2pt}\begin{pmatrix} 0 & 0 & 0 \\ 1 & x & \rho^t \\ 0 & \rho & V \end{pmatrix}\hspace{-2pt}$, 
where $x\in\Q$ and $\rho\in\Q^{2g}$.
Then the matrix $V$ is a \emph{row reduction} of $W$.
 A \emph{column enlargement} of $V$ is a matrix 
$W=\begin{pmatrix} 0 & -1 & 0 \\ 0 & x & \rho^t \\ 0 & \rho & V \end{pmatrix}$, where $x\in\Q$ and $\rho\in\Q^{2g}$.
Then the matrix $V$ is a \emph{column reduction} of $W$. 
If all the coefficients of the matrices $V$ and $W$ are integers, then the enlargement, or the reduction, is said to be {\em integral}.
\end{definition}

Note that an enlargement or a reduction of a Seifert matrix still is a Seifert matrix. An enlargement of a Seifert matrix corresponds 
to the addition of a tube to the Seifert surface.

\begin{definition}
 A matrix $P\in\mathcal{M}_{2g}(\Q)$ is {\em symplectic} if $PJP^t=J$.
A {\em rational (resp. integral) symplectic congruence} from a matrix $V$ to a matrix $V'$ is an equality $V'=PVP^t$ for some rational 
(resp. integral) symplectic matrix $P$.
\end{definition}
It is more usual to define a symplectic matrix by $P^tJP=J$. However, the two definitions are equivalent, and our choice takes sense 
when we interpret the symplectic matrix involved in a congruence relation beetween Seifert matrices as the matrix of the 
corresponding isomorphism beetween Alexander modules, see Proposition \ref{propcasinv}. 

Note that, since a symplectic matrix has determinant 1, a symplectic rational (resp. integral) matrix is invertible over $\Q$ (resp. $\Z$).

\begin{definition}
 An {\em elementary rational S-equivalence} is an enlargement, a reduction, or a rational symplectic congruence.
Two Seifert matrices are \emph{rationally S-equivalent} if they can be obtained from one another by a finite sequence of elementary 
rational S-equivalences.
\end{definition}
In particular, two Seifert matrices of a $\Q$SK-pair $(M,K)$ are rationally S-equivalent (see \cite[Theorem 8.4]{Lick} 
for the integral case, which easily generalizes). 
In Section \ref{secconservation}, we show that, given two $\Q$SK-systems, a rational S-equivalence between 
their Seifert matrices induces a canonical $\tau$-class of isomorphisms between their Alexander modules preserving the Blanchfield form. 
In Section \ref{secSeq}, we prove the converse:
\begin{proposition} \label{propSeq}
 Let $(M,K,\Sigma,\fs,V)$ and $(M',K',\Sigma',\fs',V')$ be two $\Q$SK-systems. Let $\xi : \Al(K)\to\Al(K')$ 
be an isomorphism which preserves the Blanchfield form. Then $V$ and $V'$ are related by a rational S-equivalence 
which canonically induces the $\tau$-class of $\xi$. 
\end{proposition}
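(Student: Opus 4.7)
My plan is to first reduce, by enlargements, to the case where $V$ and $V'$ have the same size; then to lift $\xi$ to a change-of-basis matrix over $\Qt$ between the standard presentations of the Alexander modules; and finally to use further enlargements to convert that matrix into a constant rational symplectic matrix, which provides the sought rational symplectic congruence.

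First, since row and column enlargements correspond geometrically to attaching tubes to the Seifert surface (as noted in the paper) and do not change the Alexander module --- the extra generator introduced in the presentation matrix $tW-W^t$ is killed by the first relation --- I can perform row enlargements on both $V$ and $V'$ until both matrices lie in $\mathcal{M}_{2g}(\Q)$ for a common $g$. These enlargements induce the identity on the Alexander modules modulo the inclusion of generators, so the $\tau$-class of $\xi$ is preserved under this preparation.

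Next, with $V,V'$ of the same size, I would choose a representative of the $\tau$-class of $\xi$ and lift it to a matrix $Q\in\mathcal{M}_{2g}(\Qt)$ between the free modules $\bigoplus_i\Qt b_i$ and $\bigoplus_j\Qt b_j'$ such that $Q(tV-V^t)=(tV'-V'^t)R$ for some $R\in\mathcal{M}_{2g}(\Qt)$. Combining the explicit formula $\phi_K(b_i,b_k)=(1-t)((tV-V^t)^{-1})_{ki}\bmod\Qt$ (and the analogous one for $\phi_{K'}$) with the hypothesis that $\xi$ preserves the Blanchfield form produces the relation $Q^t(tV'-V'^t)^{-1}Q(t^{-1})\equiv (tV-V^t)^{-1}\pmod{\Qt}$. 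Equivalently, $Q$ is symplectic with respect to the two Seifert-like pairings modulo the presentation relations; this is the algebraic expression of "Blanchfield-preserving".

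The main step is then to reduce $Q$ to a matrix constant in $t$ by further enlargements. The idea is an induction on the $t$-span of $Q$: for each highest-order monomial $Q_k\,t^k$ (or lowest-order one, symmetrically) appearing in $Q$, I would perform a row enlargement of $V$ together with a matching row enlargement of $V'$ whose scalars $x,\rho$ are chosen from the entries of $Q_k$, so that the symplectic congruence induced by the enlargement cancels the monomial $Q_k t^k$ from $Q$ without introducing monomials of higher $t$-order. Geometrically, the added tube in $\Sigma$ reroutes a generator of $\Al(K)$ by a deck-transformation shift, which is precisely the $\tau$-shift encoded by a power of $t$. Once $Q$ is constant, the relation of Step 2 forces $Q\in\mathcal{M}_{2g}(\Q)$ to be symplectic and to satisfy $V'=QVQ^t$, as required.

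The hard part is this last step: designing the enlargements so that (a) the Seifert identity $V-V^t=J$ is preserved, (b) the induced $\tau$-class on Alexander modules does not drift, and (c) the $t$-span of $Q$ strictly decreases at each stage so that the induction terminates. Steps 1 and 2 are essentially bookkeeping, but Step 3 requires a careful matching of the free parameters in the definition of row/column enlargements to the monomials of $Q$, and it is there that the interplay between the $\Qt$-module structure and the purely rational symplectic group is delicate. Once Step 3 terminates, the composition of all the enlargements, reductions, and the final rational symplectic congruence yields the rational S-equivalence whose canonically induced isomorphism between $\Al(K)$ and $\Al(K')$ lies in the $\tau$-class of $\xi$.
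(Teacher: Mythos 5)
Your proposal takes a genuinely different route from the paper, but its key step is not actually carried out, and as written it is a restatement of the difficulty rather than a proof. The entire content of the proposition is concentrated in your Step 3: starting from a $\Qt$-matrix $Q$ lifting $\xi$, you must produce elementary S-equivalences that turn $Q$ into a \emph{constant} rational symplectic matrix. You assert that each monomial $Q_k t^k$ can be cancelled by ``a row enlargement \dots whose scalars $x,\rho$ are chosen from the entries of $Q_k$,'' but a single enlargement has only $2g+1$ free parameters while $Q_k$ is a full $2g\times 2g$ matrix; moreover the isomorphism canonically induced by an enlargement merely re-indexes generators (the new generator $b_2$ is trivial and $b_1$ is a $\Qt$-combination of the others), so it is not clear how it acts on $Q$ at all, let alone why the $t$-span would strictly decrease. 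No mechanism is given, and you acknowledge yourself that this is ``the hard part.'' A correct execution of this stabilization strategy exists in the literature (it is close in spirit to Trotter's original treatment of S-equivalence) but it is long and delicate; the sketch here does not supply it. Your Step 1 is also not obviously useful: making $V$ and $V'$ the same size by enlargements does not put you closer to a congruence, whereas what actually matters is invertibility.

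The paper sidesteps all of this. It first uses \emph{reductions} (not enlargements) to replace $V$ and $V'$ by \emph{invertible} Seifert matrices $W$ and $W'$: if $V$ is singular one finds a primitive $g_1\in H_1(\Sigma;\Z)$ with $lk(g_1,\gamma^+)=0$ for all $\gamma$, completes it to a symplectic basis in which the Seifert matrix is a row enlargement of a smaller one, and iterates. The point of invertibility is Lemma \ref{lemmaQbase}: the generators $\bs$ then form a $\Q$-basis of $\Al(K)$ as a finite-dimensional $\Q$-vector space, with $t$ acting by $V^tV^{-1}$, so the matrix $P$ of $\xi$ in these bases is automatically a \emph{constant} rational matrix --- there is no $t$-dependence to kill. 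Proposition \ref{propcasinv} then extracts $V=(I-T)^{-1}J$ from the $t$-action and $P^tJP=J$ from the scalar form $\chi\circ\phi_K$ (Lemma \ref{lemmaS}), yielding $W'=PWP^t$ directly. If you want to salvage your approach, you would need to either prove the $t$-span reduction in detail or, more economically, adopt the reduction-to-invertible step, after which your Steps 2 and 3 become vacuous and your final symplecticity computation (which is essentially correct) coincides with the paper's Proposition \ref{propcasinv}.
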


\begin{definition}
 Two Seifert matrices are \emph{semi-integrally S-equivalent} if they can be obtained from one another by a finite sequence of 
enlargements, reductions, and integral symplectic congruences.
\end{definition}

In Section \ref{sectitleSeq}, as a consequence of Lemmas \ref{lemmadelta} and \ref{lemmasymp}, we obtain:
\begin{theorem} \label{thSeq}
 Two Seifert matrices are rationally S-equivalent if and only if they are semi-integrally S-equivalent. 
Furthermore, let $(M,K,\Sigma,\fs,V)$ and $(M',K',\Sigma',\fs',V')$ be two $\Q$SK-systems. Let $\xi : \Al(K)\to\Al(K')$ 
be an isomorphism which preserves the Blanchfield form. Then $V$ and $V'$ are related by a semi-integral S-equivalence 
which canonically induces the $\tau$-class of $\xi$. 
\end{theorem}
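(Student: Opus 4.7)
The plan is to argue the two directions of the equivalence separately. One direction is immediate: an integral symplectic congruence is a special case of a rational one, so every semi-integral S-equivalence is already a rational S-equivalence. The substance of the theorem lies in the converse. Given a finite sequence of elementary rational S-equivalences between $V$ and $V'$, the only moves that are not already semi-integral are the rational symplectic congruences $V \mapsto PVP^t$ with $P \in \text{Sp}_{2g}(\Q) \setminus \text{Sp}_{2g}(\Z)$, so it suffices to replace each such move by a finite sequence of enlargements, reductions, and integral symplectic congruences.

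For a single rational symplectic $P$, the plan is to invoke the two cited lemmas in succession. Lemma \ref{lemmadelta} would provide a preliminary reduction, presumably controlling the denominators appearing in $P$ through the annihilator $\delta(t)$ of the Alexander module presented by $V$. Lemma \ref{lemmasymp} would then supply the central mechanism: after enlarging $V$ by a suitable number of symplectically dual pairs of trivial rows and columns, the extension of $P$ to the symplectic group of the enlarged space can be written as a product of integral symplectic matrices, each of which is realized by an integral symplectic congruence on the enlarged Seifert matrix; reducing back then recovers $PVP^t$. The main obstacle lies in this last step: showing that a rational symplectic matrix becomes a product of integral symplectic matrices after stabilization is a symplectic analogue of elementary decomposition facts for $GL$, but arranging this while preserving the Seifert identity $V - V^t = J$ and keeping track of the effect on the Alexander module is where the delicate technical work is concentrated.

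For the realization statement, I would combine the equivalence just established with Proposition \ref{propSeq}. Given $\xi : \Al(K) \to \Al(K')$ preserving the Blanchfield form, Proposition \ref{propSeq} produces a rational S-equivalence from $V$ to $V'$ whose canonically induced $\tau$-class of isomorphisms is that of $\xi$. Replacing each rational symplectic congruence along the way by a semi-integral sequence as above yields a semi-integral S-equivalence between $V$ and $V'$. The $\tau$-class of the induced isomorphism is unchanged by the replacement: enlargements, reductions, and integral symplectic congruences each induce canonical isomorphisms on Alexander modules (as developed in Section \ref{secconservation}), and by construction the composite of the canonical isomorphisms attached to the replacement realizes the same module-level map as the original rational symplectic congruence.
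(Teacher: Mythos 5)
Your outer structure is right and matches the paper: one direction is trivial, the converse reduces to replacing each rational symplectic congruence by a semi-integral sequence, and the realization statement follows by combining this with Proposition \ref{propSeq} and checking that the replacement does not change the induced $\tau$-class. But the central mechanism you propose is not correct, and it is exactly the point where the theorem's content lives. You claim that after enlarging by trivial symplectic pairs, ``the extension of $P$ to the symplectic group of the enlarged space can be written as a product of integral symplectic matrices.'' This is false: a product of integral symplectic matrices preserves the standard lattice $\Z^{2g+2k}$, whereas the block extension $P\oplus I_{2k}$ of, say, $P=\mathrm{diag}(2,\tfrac12)$ does not, for any $k$. No amount of stabilization by identity blocks turns a non-integral symplectic matrix into a product of integral ones, so the step you flag as ``where the delicate technical work is concentrated'' cannot be carried out as stated.

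What the paper actually does is different in both halves. Lemma \ref{lemmasymp} is a decomposition of $P$ itself, with no stabilization: every rational symplectic matrix is a product of integral symplectic matrices and of the specific diagonal matrices $\Delta_n=\mathrm{diag}(n,\tfrac1n,1,\dots,1)$ ($n$ or $\tfrac1n$ a positive integer), proved by a column-reduction induction using the $\Delta_n$ precisely to clear denominators and gcd's. (Your reading of Lemma \ref{lemmadelta} as involving the annihilator $\delta(t)$ is a misparse of the notation; $\Delta_n$ is this diagonal matrix, not the Alexander polynomial.) Lemma \ref{lemmadelta} then handles each remaining non-integral factor: for $W=\Delta_nV\Delta_n$ it exhibits explicit enlargements $\tilde V$ of $V$ and $\tilde W$ of $W$ and an explicit integral symplectic $\tilde P$ with $\tilde W=\tilde P\tilde V\tilde P^t$, together with the verification that the composite enlargement--congruence--reduction induces the same $\tau$-class as $\Delta_n$. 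The crucial point, which your proposal misses, is that $\tilde P$ is \emph{not} an extension of $\Delta_n$ by an identity block --- the enlargement genuinely changes the congruence matrix, using the freedom in the new rows and columns (whose entries involve $n$, $\tfrac1n$ and the entries of $V$) to absorb the denominators. So the gap is concrete: you need either the decomposition of Lemma \ref{lemmasymp} plus the explicit construction of Lemma \ref{lemmadelta}, or some other true replacement for the false stabilization claim.
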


In the case of $\Z$SK-systems, for later applications, we need results with only integral coefficients. 
\begin{definition}
 Two Seifert matrices are \emph{integrally S-equivalent} if they can be obtained from one another by a finite sequence of 
integral enlargements, integral reductions, and integral symplectic congruences.
\end{definition}
In Section \ref{secconservation}, we prove that, given two $\Z$SK-systems, an integral S-equivalence between 
their Seifert matrices induces a canonical $\tau$-class of isomorphisms between their integral Alexander modules preserving the Blanchfield form. 
In Section \ref{secZ}, we prove:
\begin{theorem} \label{thSeqZ}
 Let $(M,K,\Sigma,\fs,V)$ and $(M',K',\Sigma',\fs',V')$ be two $\Z$SK-systems. Let $\xi : \Al_\Z(K)\to\Al_\Z(K')$ 
be an isomorphism which preserves the Blanchfield form. Then $V$ and $V'$ are related by an integral S-equivalence 
which canonically induces the $\tau$-class of $\xi$.
\end{theorem}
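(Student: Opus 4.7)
The strategy is to reduce to the rational case already proved and then upgrade the resulting S-equivalence to an integral one, using that both $V$ and $V'$ are integral. Since $\Al_\Z(K)$ and $\Al_\Z(K')$ are $\Z$-torsion free, as stated in the excerpt, they embed in the rational Alexander modules, and $\xi$ extends canonically to a rational isomorphism $\xi_\Q:\Al(K)\to\Al(K')$ preserving the rational Blanchfield form; the $\tau$-class of $\xi$ is determined by that of $\xi_\Q$. Applying Theorem \ref{thSeq}, I obtain a semi-integral S-equivalence $V=W_0\to W_1\to\cdots\to W_n=V'$ canonically inducing the $\tau$-class of $\xi_\Q$, in which every symplectic congruence is already integral but the enlargements and reductions may carry non-integral parameters $(x,\rho)\in\Q\times\Q^{2g_i}$.

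The core of the argument is then a clearing lemma: a semi-integral S-equivalence whose two endpoints are integral matrices can be rewritten as an integral S-equivalence inducing the same $\tau$-class. My plan for this lemma is to process the sequence one step at a time. When a rational enlargement of an integral $W_i$ appears, I replace it by an integral enlargement with parameters $(x_0,\rho_0)\in\Z\times\Z^{2g_i}$ chosen so that the resulting matrix differs from the original enlargement only in the newly added rows and columns. This discrepancy can either be eliminated by an auxiliary integral symplectic congruence acting on the new basis elements, or propagated along the sequence and eventually cancelled against a later rational move---which it must, since the final matrix $V'$ is integral. The $\tau$-class is preserved throughout because integral symplectic congruences and integer enlargements correspond to explicit integer changes of basis of $H_1(\Sigma;\Z)$, and hence induce integer isomorphisms of $\Al_\Z$ that are compatible at each step with the evolving identification of the modules.

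The hard part will be justifying this clearing step rigorously. The parameters $(x,\rho)$ encode linking numbers of new curves on the enlarged Seifert surface with the old basis, and replacing them with integers means exhibiting actual curves realizing the same isomorphism on $\Al_\Z$, not merely on $\Al$. This is where $\xi$ being integral---and not only rational---plays a decisive role: it guarantees that the integer approximations on the two sides can be chosen compatibly, so that the integral sequence can be closed at $V'$ without introducing new denominators. One must also verify that the $\tau$-class ambiguity, i.e.\ the freedom $\xi\mapsto\xi\circ m_k$, absorbs any residual power-of-$t$ discrepancy in the matching of integer generators that arises from the clearing procedure. An appealing alternative I would try in parallel is to rerun the constructive proof of Proposition \ref{propSeq} directly over $\Zt$, choosing from the outset curves with integer homology data---a choice made possible precisely by the $\Z$-torsion freeness of $\Al_\Z$ and by the integrality of $\xi$.
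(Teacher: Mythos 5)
Your reduction to Theorem \ref{thSeq} followed by a ``clearing lemma'' is where the argument breaks down, and the clearing lemma as you state it is actually false. A semi-integral S-equivalence between two integral Seifert matrices induces an isomorphism of the \emph{rational} Alexander modules preserving the Blanchfield form, but that isomorphism need not carry $\Al_\Z(K)$ onto $\Al_\Z(K')$ (a $\Delta_n$-congruence between two integral matrices generally moves the integral lattice), whereas every integral S-equivalence does preserve the integral modules by Lemma \ref{lemmacons2Z}; so such a semi-integral S-equivalence can admit no integral rewriting inducing the same $\tau$-class. The integrality of $\xi$ must therefore enter the \emph{construction} of the sequence, not be invoked afterwards to close a local patching procedure. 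The local step itself is also uncontrolled: after a rational enlargement the intermediate matrices are no longer integral, the discrepancy does not stay confined to the new rows and columns once further congruences act, and ``propagated along the sequence and eventually cancelled, since $V'$ is integral'' asserts exactly what has to be proved.

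The paper's route is different and supplies precisely the missing divisibility input. It encodes $\Al_\Z(K)$ and $\Al_\Z(K')$ as admissible (self-dual and $z$-stable) lattices $\Gamma,\Gamma'$ in the scalar space $\Al(K')$ with $\Lambda\Gamma=\Lambda\Gamma'$, and invokes Trotter's result (Proposition \ref{propTrotter}) to join them by a chain of adjacent admissible lattices satisfying $z\Gamma_{i-1}\subset\Gamma_i$ or $z\Gamma_i\subset\Gamma_{i-1}$. That extra condition is what forces the relevant row of the Seifert matrix to be divisible by $n$ (Lemma \ref{lemmastep}), so that the enlargement occurring in the $\Delta_n$-move of Lemma \ref{lemmadelta} can be taken integral. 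Your alternative of rerunning Proposition \ref{propSeq} over $\Zt$ hits the same wall: even when $\xi$ is integral, the matrix $P$ of $\xi$ in the bases $\bs,\bs'$ is in general only rational symplectic (the two generating families span different lattices generating the same $\Lambda$-module), and decomposing such a $P$ into integral symplectic factors and \emph{realizable} $\Delta_n$'s is exactly Trotter's lattice argument, which your proposal does not supply. The outer reduction to invertible Seifert matrices is unproblematic, since those reductions come from integral changes of basis of $H_1(\Sigma;\Z)$; the gap is concentrated in the invertible case, i.e.\ in Proposition \ref{propSeqZ}.
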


   \subsection{Lagrangian-preserving surgeries}

\begin{definition}
 For $g\in \mathbb{N}$, a \emph{genus $g$ rational (resp. integral) homology handlebody} ($\Q$HH, resp. $\Z$HH) 
is a 3-manifold which is compact, oriented, and which has the same homology with rational (resp. integral) coefficients 
as the standard genus $g$ handlebody.
\end{definition}
Such a $\Q$HH is connected, and its boundary is necessarily homeomorphic to the standard genus $g$ surface. 
Note that a $\Z$HH is a $\Q$HH. 

\begin{definition}
The \emph{Lagrangian} $\mathcal{L}_A$ of a $\Q$HH $A$ is the kernel of the map 
$$i_*: H_1(\partial A;\Q)\to H_1(A;\Q)$$
induced by the inclusion. Two $\Q$HH's $A$ and $B$ have \emph{LP-identified} boundaries if $(A,B)$ is equipped with a homeomorphism 
$h:\partial A\fl{\cong}\partial B$ such that $h_*(\mathcal{L}_A)=\mathcal{L}_B$.
\end{definition}
The Lagrangian of a $\Q$HH $A$ is indeed a Lagrangian subspace of $H_1(\partial A;\Q)$ 
with respect to the intersection form.

Let $M$ be a $\Q$HS, let $A\subset M$ be a $\Q$HH, and let $B$ be a $\Q$HH whose boundary is LP-identified with $\partial A$.
Set $M(\frac{B}{A})=(M\setminus Int(A))\cup_{\partial A=\partial B}B$. We say that the $\Q$HS 
$M(\frac{B}{A})$ is obtained from $M$ by \emph{Lagrangian-preserving surgery}, or \emph{LP-surgery}.
 
Given a $\Q$SK-pair $(M,K)$, a \emph{null-$\Q$HH} in $M\setminus K$ is a $\Q$HH $A\subset M\setminus K$ such that 
the map $i_* : H_1(A;\Q)\to H_1(M\setminus K;\Q)$ induced by the inclusion has a trivial image.
A \emph{null LP-surgery} on $(M,K)$ is an LP-surgery $(\frac{B}{A})$ such that $A$ is null in $M\setminus K$. 
The $\Q$SK-pair obtained by surgery is denoted by $(M,K)(\frac{B}{A})$. 

Similarly, define {\em integral LP-surgeries}, {\em null-$\Z$HH's}, and {\em integral null LP-surgeries}. The null-moves 
introduced by Garoufalidis and Rozansky in \cite{GR} are defined as null Borromean surgeries. Borromean surgeries are specific 
integral LP-surgeries (see Matveev \cite{Mat}). In \cite[Lemma 4.11]{AL}, Auclair and Lescop proved that two $\Z$HH's whose boundaries 
are LP-identified can be obtained from one another by a finite sequence of Borromean surgeries in the interior of the $\Z$HH's. 
Hence the classes of $\Z$SK-pairs modulo null integral LP-surgeries are exactly the classes of $\Z$SK-pairs modulo null-moves. 

In Section \ref{secconservation}, we prove that a null LP-surgery induces a canonical isomorphism from the Alexander module 
of the initial $\Q$SK-pair to the Alexander module of the surgered $\Q$SK-pair, which preserves the Blanchfield form. 
Conversely, in Section \ref{sectop}, we prove:
\begin{theorem} \label{thLP}
 Let $(M,K)$ and $(M',K')$ be $\Q$SK-pairs. Assume there is an isomorphism $\xi: \Al(K)\to\Al(K')$ which preserves the Blanchfield form. 
Then $(M',K')$ can be obtained from $(M,K)$ by a finite sequence of null LP-surgeries which induces an isomorphism in the $\tau$-class of $\xi$.
\end{theorem}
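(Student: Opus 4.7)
The plan is to reduce Theorem \ref{thLP} to Theorem \ref{thSeq} and realize each elementary S-equivalence of Seifert matrices either by an ambient modification that leaves $(M,K)$ unchanged or by a single null LP-surgery. The ingredients are Theorem \ref{thSeq}, the conservation results of Section \ref{secconservation}, and the surface-theoretic interpretation of enlargements indicated in Section \ref{sectop}. First, I would complete $(M,K)$ and $(M',K')$ to $\Q$SK-systems and apply Theorem \ref{thSeq} to $\xi$: this yields a semi-integral S-equivalence from $V$ to $V'$ canonically inducing the $\tau$-class of $\xi$. Each of the three elementary moves of a semi-integral S-equivalence leaves the pair $(M,K)$ unchanged: an enlargement (resp.\ reduction) is performed by attaching (resp.\ removing) a small tube to the Seifert surface inside $M\setminus K$, while an integral symplectic congruence $V\mapsto PVP^t$ is realized by replacing $\fs$ with $P\fs$, which is again a symplectic $\Z$-basis of $H_1(\Sigma;\Z)$ because $P$ is integral symplectic. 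After performing all such moves ambiently on $(M,K,\Sigma,\fs,V)$, we may assume that the modified system $(M,K,\widetilde{\Sigma},\widetilde{\fs},V')$ shares its Seifert matrix $V'$ with $(M',K',\Sigma',\fs',V')$.

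The core of the proof is then the claim that two $\Q$SK-systems with identical Seifert matrices are related by a single null LP-surgery. Let $A=M\setminus Int(T(\widetilde{\Sigma}))$ and $A'=M'\setminus Int(T(\Sigma'))$ be the $\Q$HH's complementary to the tubular neighborhoods of the two Seifert surfaces; both have genus $2g$, as follows from an Alexander-duality computation in a $\Q$HS. A homeomorphism between the Seifert surfaces carrying $\widetilde{\fs}$ onto $\fs'$ extends to a homeomorphism $h:\partial A\to\partial A'$. I would then show that the rational Lagrangian $\Lag_A\subset H_1(\partial A;\Q)$ is expressible, via push-off and linking computations, as an explicit $2g$-dimensional subspace whose coefficients are read off from $V'$, and similarly for $\Lag_{A'}$; since these coefficients agree, $h_*$ sends $\Lag_A$ onto $\Lag_{A'}$, so $h$ is an LP-identification. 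The surgery is null because any loop in $A$ is disjoint from $\widetilde{\Sigma}$, hence has zero linking number with $K$, so the inclusion-induced map $H_1(A;\Q)\to H_1(M\setminus K;\Q)\cong\Q$ vanishes. By construction $(M,K)(\frac{A'}{A})$ is homeomorphic to $(M',K')$ as a $\Q$SK-pair.

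The last step is to verify that the isomorphism $\Al(K)\to\Al(K')$ built from the ambient moves of the first step and the null LP-surgery of the second step coincides, up to multiplication by a power of $t$, with the isomorphism canonically induced by the semi-integral S-equivalence via the conservation results of Section \ref{secconservation}; by Theorem \ref{thSeq} this is the $\tau$-class of $\xi$. The main obstacle I foresee is the Lagrangian identification in the middle step: one must verify that $\Lag_A$ depends on $(M,K,\Sigma,\fs)$ only through the Seifert matrix $V$, and do this over $\Q$ rather than merely over $\Z$ as in the classical integral case. Explicitly, I expect $\Lag_A$ to be spanned by $2g$ combinations of the push-offs $f_i^\pm$ with coefficients dictated by the entries of $V$, and carrying this computation through rationally in a $\Q$HS, rather than in a $\Z$HS, is the new technical input required by the statement.
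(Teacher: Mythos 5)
Your overall architecture is sound and close to the paper's: reduce to Theorem \ref{thSeq}, realize the elementary moves geometrically, and use the fact that the complement of a tubular neighborhood of a Seifert surface is a null $\Q$HH of genus $2g$ whose Lagrangian is determined by the Seifert matrix (this is exactly the paper's Lemma \ref{lemmacongZ}, whose special case $P=I$ is your ``core claim''). But there is a genuine gap in your treatment of enlargements and reductions. You assert that an enlargement can be performed \emph{ambiently}, ``by attaching a small tube to the Seifert surface inside $M\setminus K$,'' leaving $(M,K)$ unchanged. The enlargement
$W=\left(\begin{smallmatrix} 0 & 0 & 0 \\ 1 & x & \rho^t \\ 0 & \rho & V \end{smallmatrix}\right)$
produced by Theorem \ref{thSeq} has \emph{rational}, generally non-integral, entries $x$ and $\rho$ (the matrices $\tilde V$ in Lemma \ref{lemmadelta} contain entries such as $s/n^2$ and $\rho/n$). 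Realizing it by a tube requires a curve $\gamma\subset M\setminus\Sigma$ with $lk(\gamma,f_i)=\rho_i$ and prescribed self-linking $x$, and in a fixed $\Q$HS the achievable linking numbers form a proper subset of $\Q$ (in $S^3$, only integers). So the tube cannot in general be added inside the given $M$; this is precisely why the paper's Lemma \ref{lemmaenl} first modifies $M$ by null LP-surgeries on rational homology tori (to create curves $\gamma_i$ with $lk(\gamma_i,f_j)=\delta_{ij}c_i/d_i$) and on a rational homology ball (to adjust the self-linking modulo $\Z$), and only then attaches the tube. The enlargement step is therefore itself a source of null LP-surgeries, not an ambient modification; your plan as stated only works for integral enlargements in a $\Z$HS (the situation of Lemma \ref{lemmaenlZ}), which misses the rational case that the theorem is about.

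The rest of your argument is essentially the paper's. Integral symplectic congruences can indeed be absorbed into a change of symplectic basis (or, as in the paper, realized by a surgery replacing the complement of the surface neighborhood), and the single-surgery step for two systems sharing a Seifert matrix, together with the Lagrangian computation $\Lag_A=\langle f_i^+-\sum_j V_{ji}e_j\rangle$ and the nullity of $A$ in $M\setminus K$, matches Lemmas \ref{lemmaHH} and \ref{lemmacongZ}. Once you replace the ambient tube-attachment by the surgered construction of Lemma \ref{lemmaenl} (accepting that each enlargement or reduction may cost one or two null LP-surgeries rather than zero), and check as in the paper that each surgery induces the same $\tau$-class as the corresponding elementary S-equivalence, the proof closes.
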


Similarly, in Section \ref{secconservation}, we prove that an integral null LP-surgery induces a canonical isomorphism from the integral 
Alexander module of the initial $\Z$SK-pair to the Alexander module of the surgered $\Z$SK-pair, which preserves the Blanchfield form. 
In Section \ref{sectop}, we prove:
\begin{theorem} \label{thLPZ}
 Let $(M,K)$ and $(M',K')$ be $\Z$SK-pairs. Assume there is an isomorphism $\xi: \Al_\Z(K)\to\Al_\Z(K')$ which preserves the Blanchfield form. 
Then $(M',K')$ can be obtained from $(M,K)$ by a finite sequence of integral null LP-surgeries which induces an isomorphism in the $\tau$-class 
of $\xi$.
\end{theorem}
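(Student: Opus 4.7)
The plan is to reduce Theorem \ref{thLPZ} to the algebraic statement of Theorem \ref{thSeqZ} and then topologically realize each elementary integral S-equivalence by a finite sequence of integral null LP-surgeries. First, I would pick $\Z$SK-systems $(M,K,\Sigma,\fs,V)$ and $(M',K',\Sigma',\fs',V')$ extending the given pairs, and apply Theorem \ref{thSeqZ} to $\xi$ to obtain a finite chain $V=V_0,V_1,\ldots,V_n=V'$ of Seifert matrices in which each step is an integral enlargement, an integral reduction, or an integral symplectic congruence, and whose composition canonically induces the $\tau$-class of $\xi$. It then suffices to show: given a $\Z$SK-system with Seifert matrix $V_i$ and an elementary integral S-equivalence to $V_{i+1}$, there is a $\Z$SK-system realizing $V_{i+1}$ that is obtained from the previous one by integral null LP-surgeries inducing on Alexander modules the same $\tau$-class as the algebraic step.

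For integral enlargements and reductions, the topological interpretation given earlier in the paper — that an enlargement corresponds to attaching a tube to the Seifert surface $\Sigma$ — already works in the integral setting, provided the new generators $b_{2g+1},b_{2g+2}$ can be realized with the prescribed integral linking data $x$ and $\rho$. Since $x,\rho$ are integral, the added $1$-handle can be embedded in $M\setminus K$ without any LP-surgery, yielding a new $\Z$SK-system with no change to $(M,K)$; this step induces the identity of $\Al_\Z(K)$ (up to the obvious inclusion of presentations) compatibly with the algebraic enlargement. A reduction is treated symmetrically after possibly enlarging first.

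The main work, and the expected obstacle, is the integral symplectic congruence step $V_{i+1}=PV_iP^t$ with $P\in Sp_{2g}(\Z)$. I would first factor $P$ as a product of elementary symplectic matrices (transvections and block swaps generating $Sp_{2g}(\Z)$) and show that each elementary factor corresponds to a modification of the $\Z$SK-system by a single integral null LP-surgery (or a short sequence of them) performed in $M\setminus(K\cup\Sigma)$, that replaces the basis $\fs$ by the new basis $P\fs$ while keeping $(M,K)$ unchanged up to integral null LP-surgery. Concretely, the surgery is done in a regular neighborhood of curves carrying the off-diagonal entries of the transvection; because these entries are integers, the required LP-identification exists with a $\Z$HH (not merely a $\Q$HH), so the surgery is integral, and because the curves bound in $M\setminus K$ after being pushed off $\Sigma$, the null condition in $M\setminus K$ is satisfied. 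The key point to verify here is that the canonical isomorphism induced on $\Al_\Z(K)$ by this integral null LP-surgery agrees, in the $(b_i)$-presentation, with the change-of-basis matrix dictated by $P$, matching the one produced by the algebraic congruence.

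Finally, I would glue the realizations of the steps together, checking that the composition of the induced isomorphisms on integral Alexander modules equals, up to a power of $t$, the isomorphism $\xi$; the $\tau$-class indeterminacy exactly absorbs the freedom in lifting each $b_i$ to $\tilde X$ at each stage. Combining all steps, $(M',K')$ is obtained from $(M,K)$ by a finite sequence of integral null LP-surgeries inducing an isomorphism in the $\tau$-class of $\xi$, as required. The hardest technical point will be the explicit topological construction, with full integrality control, of the elementary symplectic transvection step; once this is in place, the rest is bookkeeping already largely set up by the conservation results of Section \ref{secconservation}.
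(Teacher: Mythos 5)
Your overall strategy---reduce to Theorem \ref{thSeqZ} and realize each elementary integral S-equivalence topologically---is exactly the paper's, and your treatment of integral enlargements and reductions matches Lemma \ref{lemmaenlZ} (integrality of $x$ and $\rho$ means the tube can be attached with no surgery at all). The gap is in the congruence step. Replacing the basis $\fs$ of $H_1(\Sigma;\Z)$ by $P\fs$ is a mere change of basis on the \emph{same} surface in the \emph{same} pair: it changes the Seifert matrix by congruence without any surgery and without ever leaving $(M,K)$. So your plan, in which every step either adds a tube to $\Sigma$ or ``keeps $(M,K)$ unchanged up to surgery'' while rewriting the basis, never produces a mechanism for arriving at the \emph{given} pair $(M',K')$, which may be a completely different $\Z$HS containing a completely different knot. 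In particular you do not address the unavoidable case of two $\Z$SK-systems with \emph{identical} Seifert matrices but non-homeomorphic ambient pairs; the paper needs this ``trivial congruence'' case at each stage to pass from the abstract realization of $V_{i+1}$ supplied by Lemma \ref{lemmarealmat} to the next system in the chain, and at the last stage to land on $(M',K')$ itself.

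The missing idea is Lemma \ref{lemmaZcongZ}: given two $\Z$SK-systems with $V'=PVP^t$, $P\in Sp_{2g}(\Z)$, one takes a homeomorphism $h:\Sigma\to\Sigma'$ inducing $(P^t)^{-1}$ on $H_1$ (no factorization into transvections is needed, since the mapping class group surjects onto $Sp_{2g}(\Z)$), extends it over regular neighborhoods $H$, $H'$ of the Seifert surfaces, and performs the single surgery replacing $A=M\setminus Int(H)$ by $B=M'\setminus Int(H')$. Lemma \ref{lemmaHH} shows $A$ and $B$ are genus $2g$ $\Z$HH's, null in the knot complements, and the congruence relation is precisely what makes $h_{|\partial A}$ carry $\Lag_A$ (generated by the $\alpha_i=f_i^+-\sum_j V_{ji}e_j$) onto $\Lag_B$, so the surgery is an integral null LP-surgery inducing the right $\tau$-class. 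Your ``surgery in a regular neighborhood of curves carrying the off-diagonal entries of the transvection'' does not capture this: the surgery that actually changes the ambient pair is a replacement of the entire exterior of the Seifert surface, and its LP-identification is verified through the Seifert matrices, not through integrality of transvection entries.
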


We end the article by proving the following proposition in Section \ref{secfin}.
\begin{proposition} \label{propfin}
 There are $\Q$SK-pairs $(M,K)$ and $(M',K')$ that can be obtained from one another by a finite sequence of null LP-surgeries,
but not by a single null LP-surgery.
\end{proposition}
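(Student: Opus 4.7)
The plan is to exhibit two $\Q$SK-pairs $(M,K)$ and $(M',K')$ fulfilling the hypotheses of Theorem~\ref{thLP}---so that a finite sequence of null LP-surgeries between them exists---yet not related by a single such surgery. The obstruction will come from a finite type invariant sensitive to the distinction between one and several null LP-surgeries.

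Concretely, I would take $(M,K)=(S^3,U)$ with $U$ the unknot: the Alexander module $\Al(U)$ is trivial and the Blanchfield form vanishes. For $(M',K')$ I would choose $(N,U)$ for a carefully selected $\Q$HS $N$; both Alexander modules are trivial, so Theorem~\ref{thLP} applies automatically and supplies a sequence of null LP-surgeries. Alternatively one can work with nontrivial but isomorphic Alexander modules built by first performing a ``background'' null LP-surgery to install a nontrivial module and then two further surgeries producing the obstructed difference.

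For the obstruction, I would invoke a finite type invariant of $\Q$SK-pairs of degree~$2$ in the filtration generated by null LP-surgeries (for instance an equivariant refinement of the Casson--Walker or $\Theta$-invariant in the spirit of Lescop). A single null LP-surgery $(\frac{B}{A})$ contributes only a ``degree-$1$'' change to such invariants, whereas two null LP-surgeries with disjoint supports contribute a sum of two degree-$1$ terms that reaches strictly more values. Building $N$ as the outcome of two disjoint elementary null LP-surgeries on $(S^3,U)$---for instance two genus-$1$ replacements of standard solid tori disjoint from $U$ by distinct homology solid tori, each producing a detectable elementary change---one computes the combined value of the chosen invariant and checks that it lies outside the image of the ``single null LP-surgery'' map starting from $(S^3,U)$.

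The main obstacle is making this obstruction rigorous: one must identify a specific invariant whose single-surgery image is provably a proper subset of its two-surgery image, and then verify by direct computation that the target pair falls outside the single-surgery image. This is likely carried out either by identifying a Jacobi-type diagram associated to the graded piece of the null LP-surgery filtration whose ``product of two elementary surgeries'' value is nonzero, or by an ad hoc computation of a specific degree-$2$ finite type invariant on the explicit pairs $N_1\#N_2$ and comparing to all possible single genus-$g$ null LP-surgery outputs.
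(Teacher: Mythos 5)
Your proposal does not follow the paper's route, and as it stands it cannot be completed: the central idea fails for a structural reason. Two null LP-surgeries $(\frac{B_1}{A_1})$ and $(\frac{B_2}{A_2})$ with disjoint supports can always be merged into a \emph{single} null LP-surgery, by joining $A_1$ to $A_2$ (and $B_1$ to $B_2$) with a tube to form one higher-genus null $\Q$HH; the paper itself uses exactly this trick at the end of the proof of Lemma \ref{lemmaenl} (``Since the different $\Q$HH's replaced by surgery are disjoint, they can be connected by tubes. Thus $(M',K')$ can be obtained from $(M,K)$ by one surgery on a genus $2g$ $\Q$HH.''). Consequently your candidate pair --- $N$ obtained from $(S^3,U)$ by two disjoint elementary null LP-surgeries away from $U$ --- \emph{is} obtainable by a single null LP-surgery, and no invariant whatsoever can witness the contrary. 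The same remark shows that no ``degree-$2$'' finite type invariant can separate one surgery from two disjoint ones: the filtration by null LP-surgeries is defined via alternating sums over subsets of a family of surgeries, not by counting how many surgeries connect two pairs, so ``contributes only a degree-$1$ change'' is not a property of a single surgery. Beyond this, the proposal is a plan rather than a proof: the decisive step (exhibiting an invariant whose single-surgery image is a proper subset of its multi-surgery image, and computing it) is explicitly left open.

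The obstruction the paper uses is of a different nature and exploits what can happen at \emph{intermediate} stages of a sequence. Take two knots $K,K'$ in $S^3$ with Seifert matrices $\begin{pmatrix} -1 & 0 \\ 1 & 2 \end{pmatrix}$ and $\begin{pmatrix} 3 & 1 \\ 2 & 0 \end{pmatrix}$: both have Alexander polynomial $(2t-1)(2-t)$, a product of two dual non-symmetric prime factors, so their rational Blanchfield forms are isomorphic and Theorem \ref{thLP} provides a finite sequence of null LP-surgeries from one to the other. But their \emph{integral} Alexander modules differ (one is cyclic, the other has a nontrivial second elementary ideal). If a single null LP-surgery $(\frac{B}{A})$ related them, then $A$ and $B$ would be $\Q$HH's with boundary in a $\Z$HS, hence $\Z$HH's by Lemma \ref{lemmaHH}, so the surgery would be integral and would preserve the integral Alexander module by Lemma \ref{lemmaintA} --- a contradiction. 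A sequence escapes this because intermediate manifolds need not be $\Z$HS's. If you want to salvage your approach, you would need an obstruction of this kind, i.e.\ one that constrains a single surgery more than it constrains each step of a sequence; merely counting surgeries cannot work.
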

Note that this cannot happen in the case of integral null LP-surgeries. Indeed, as mentioned above, these surgeries can be realized 
by Borromean surgeries, which can be realized in the regular neighborhood of graphs.

\paragraph{Acknowledgements}
I would like to sincerely thank my advisor, Christine Lescop, for her great guidance.

   \section{Conservation of the Blanchfield form} \label{secconservation}

In this section, we prove that null LP-surgeries (Lemma \ref{lemmacons1}) and relations of rational S-equivalence (Lemma \ref{lemmacons2}) 
induce canonical $\tau$-classes of isomorphisms between the Alexander modules which preserve the Blanchfield form. 
We also state similar results in the integral case.

\begin{lemma} \label{lemmacons1}
 Let $(M,K)$ be a $\Q$SK-pair. Let $A$ be a null-$\Q$HH in $M\setminus K$. Let $B$ be a $\Q$HH whose boundary is LP-identified 
with $\partial A$. Set $(M',K')=(M,K)(\frac{B}{A})$. Then the surgery induces a canonical isomorphism $\xi: \Al(K)\to\Al(K')$ 
which preserves the Blanchfield form.
\end{lemma}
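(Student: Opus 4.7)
The plan is to show that the infinite cyclic covers $\tilde X$ of $X = M\setminus K$ and $\tilde{X'}$ of $X'= M'\setminus K'$ share a common piece $\tilde Y$ lying over $Y = X\setminus \mathrm{Int}(A) = X'\setminus \mathrm{Int}(B)$, and that both Alexander modules arise as the same quotient of $H_1(\tilde Y;\Q)$. Since $A$ is null in $X$, the composition $\pi_1(A)\to\pi_1(X)\to\Z$ is trivial, hence so is $\pi_1(\partial A)\to\Z$, and the preimage of $A$ in $\tilde X$ is a disjoint union $\bigsqcup_{k\in\Z}\tilde A_k$ of copies of $A$, giving a decomposition $\tilde X = \tilde Y\cup_{\partial}\bigsqcup_k \tilde A_k$. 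First I would verify that $B$ is null in $X'$: the Mayer-Vietoris reformulation of nullity is the equality $i_{Y*}(H_1(\partial A;\Q))=i_{Y*}(\Lag_A)$ inside $H_1(Y;\Q)$, and since $h_*(\Lag_A)=\Lag_B$ this condition transfers to $B$. The same reasoning then gives a parallel decomposition $\tilde{X'}=\tilde Y\cup\bigsqcup_k\tilde B_k$, once one checks that $\tilde Y$ is intrinsic to $Y$: it is the connected cover of $Y$ associated with the homomorphism $\pi_1(Y)\to\Z$ given by linking with $K$, and this homomorphism is determined inside $Y$ by pairing with a meridian of $K$ lying in $Y$.

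Next, a Mayer-Vietoris computation, using that each $\tilde A_k$ is a $\Q$HH null in $\tilde X$ (nullity lifts from the base), gives a surjection $H_1(\tilde Y;\Q)\twoheadrightarrow \Al(K)$ with kernel $i_{\tilde Y*}(\Lag_{\tilde A})$, where $\Lag_{\tilde A} = \bigoplus_k \Lag_{\tilde A_k}\subset H_1(\bigsqcup_k\partial\tilde A_k;\Q)$. The analogous description holds for $\Al(K')$. The LP-identification $h$ lifts to an equivariant identification of $\bigsqcup_k\partial\tilde A_k$ with $\bigsqcup_k\partial\tilde B_k$ taking $\Lag_{\tilde A}$ to $\Lag_{\tilde B}$, so the two quotients of $H_1(\tilde Y;\Q)$ coincide. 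This defines the canonical $\Qt$-module isomorphism $\xi\colon \Al(K)\to\Al(K')$, whose construction depends only on $h$.

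For preservation of the Blanchfield form, represent classes $\gamma,\eta\in\Al(K)$ by cycles $J_1,J_2\subset\tilde Y$ (possible by the surjectivity above), and let $\delta$ annihilate $\Al(K)\cong\Al(K')$. Since $\delta(\tau)[J_1]$ lies in $i_{\tilde Y*}(\Lag_{\tilde A})$, write $\delta(\tau)J_1-\ell=\partial F_Y$ for a rational $2$-chain $F_Y\subset\tilde Y$ and $\ell\in\Lag_{\tilde A}$, and pick $F_A\subset\bigsqcup_k\tilde A_k$ with $\partial F_A=\ell$; then $F=F_Y+F_A$ bounds $\delta(\tau)J_1$ in $\tilde X$. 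Using $h_*(\Lag_{\tilde A})=\Lag_{\tilde B}$, the same cycle $\ell$ bounds a chain $F_B\subset\bigsqcup_k\tilde B_k$, so $F'=F_Y+F_B$ bounds $\delta(\tau)J_1$ in $\tilde{X'}$. The cycles $\tau^k\delta(\tau)J_2\subset\tilde Y$ are disjoint from both $\bigsqcup_k\tilde A_k$ and $\bigsqcup_k\tilde B_k$, so $F_A\cdot\tau^k\delta(\tau)J_2=0=F_B\cdot\tau^k\delta(\tau)J_2$, whence $F\cdot\tau^k\delta(\tau)J_2=F_Y\cdot\tau^k\delta(\tau)J_2=F'\cdot\tau^k\delta(\tau)J_2$. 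Summing with the $\delta$ normalization, the equivariant linking numbers in $\tilde X$ and $\tilde{X'}$ agree, proving $\phi_{K'}(\xi(\gamma),\xi(\eta))=\phi_K(\gamma,\eta)$.

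The main obstacle is the canonical identification of $\tilde Y$ as a covering of $Y$ that embeds the same way in both $\tilde X$ and $\tilde{X'}$; once the linking-with-$K$ homomorphism $\pi_1(Y)\to\Z$ is shown to be intrinsic to $Y$, the Mayer-Vietoris computation of the isomorphism $\xi$ and the bounding-chain decomposition $F=F_Y+F_A$ versus $F'=F_Y+F_B$ for the Blanchfield form follow by routine bookkeeping.
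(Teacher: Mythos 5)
Your proposal is correct and follows essentially the same route as the paper: decompose the infinite cyclic cover along the (trivially covered) preimage of $A$, use Mayer--Vietoris to exhibit both Alexander modules as the quotient of $H_1$ of the common piece by the image of $\Qt\otimes\Lag_A=\Qt\otimes\Lag_B$, and then compare equivariant linking numbers by swapping the part of a bounding $2$-chain lying in $\tilde A$ for one in $\tilde B$, using that the curves being linked stay in the common piece. The paper phrases the second step with Seifert surfaces and integer multiples rather than rational $2$-chains, and leaves implicit the two points you spell out (that $B$ is null in $M'\setminus K'$ and that the cover of $Y$ is intrinsic), but these are only cosmetic differences.
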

\begin{proof}
In this proof, the homology modules are considered with rational coefficients. 
Let $\tilde{X}$ (resp. $\tilde{X}'$) be the infinite cyclic covering associated with $(M,K)$ 
(resp. $(M',K')$). The preimage $\tilde{A}$ of $A$ in $\tilde{X}$ (resp. $\tilde{B}$ of $B$ in $\tilde{X}'$)
is the disjoint union of $\Z$ copies $A_i$ of $A$ (resp. $B_i$ of $B$).

Set $Y=\tilde{X}\setminus Int(\tilde{A})$. The Mayer-Vietoris sequence associated with $\tilde{X}=\tilde{A}\cup Y$ yields 
the exact sequence:
$$H_1(\partial \tilde{A}) \to H_1(\tilde{A})\oplus H_1(Y) \to H_1(\tilde{X}) \to 0.$$
Since $H_1(\partial \tilde{A})\cong H_1(\tilde{A})\oplus(\Qt\otimes\mathcal{L}_A)$, we get 
$\displaystyle H_1(\tilde{X})\cong \frac{H_1(Y)}{\Qt\otimes\mathcal{L}_A}$.
Similarly, $\displaystyle H_1(\tilde{X}')\cong \frac{H_1(Y)}{\Qt\otimes\mathcal{L}_B}$.
Since $\mathcal{L}_A=\mathcal{L}_B$, the Alexander modules $H_1(\tilde{X})$ and $H_1(\tilde{X}')$ are canonically
identified.

Now consider two null-homologous knots $J$ and $J'$ in $\tilde{X}$ that do not meet $\tilde{A}$, and such that $J\cap\tau^k(J')=\emptyset$ 
for all $k\in\Z$. Consider a Seifert surface $\Sigma$ of $J$.
Assume that $\Sigma$ is transverse to $\partial \tilde{A}$ and $J'$. Write $\Sigma=\Sigma_1\cup\Sigma_2$, where 
$\Sigma_1=\Sigma \cap Y$ and $\Sigma_2=\Sigma \cap \tilde{A}$.
Since $J'$ does not meet $\Sigma_2$, the linking number $lk_{\tilde{X}}(J,J')$ is equal to the algebraic intersection number 
$<J',\Sigma_1>$. Now $\partial \Sigma_2$ is an integral linear combination of curves $\alpha_i\in\mathcal{L}_{A_i}$. 
In $\tilde{X}'$, each $\alpha_i$ lies in $\mathcal{L}_{B_i}$, so each $\alpha_i$ has a multiple 
that bounds a surface in $B_i$. Thus, there is a surface $\Sigma_3\subset\tilde{B}$ such that 
$\partial\Sigma_3=n\partial\Sigma_2$ for some integer $n$. We have $nJ=\partial(n\Sigma_1\cup\Sigma_3)$, thus:
$$lk_{\tilde{X}'}(J,J')=\frac{1}{n}<J',n\Sigma_1\cup\Sigma_3>=<J',\Sigma_1>= lk_{\tilde{X}}(J,J').$$
Since any class $\gamma$ in $H_1(\tilde{X})$ has a multiple that can be represented by a knot $J$ in $Y$
such that $P(t).J$ is null-homologous for some $P\in\Qt$, 
the Blanchfield form is preserved.
\end{proof}

The previous proof still works, when $\Q$ is replaced by $\Z$. Therefore:
\begin{lemma} \label{lemmaintA}
 Let $(M,K)$ and $(M',K')$ be $\Z$SK-pairs. Assume $(M',K')$ can be obtained from $(M,K)$ by an integral 
null LP-surgery. Then this surgery induces a canonical isomorphism between their integral Alexander modules which 
preserves the Blanchfield form.
\end{lemma}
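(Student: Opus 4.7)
The plan is to repeat the proof of Lemma \ref{lemmacons1} verbatim, with $\Z$ replacing $\Q$ throughout; all homology modules below are then understood with integer coefficients and all tensor products are over $\Z$.

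I would first reproduce the same setup: let $\tilde X$ (resp. $\tilde X'$) denote the infinite cyclic covering of the exterior of $K$ (resp. $K'$), and let $\tilde A = \bigsqcup_{i \in \Z} A_i$ (resp. $\tilde B = \bigsqcup_{i \in \Z} B_i$) be the preimage of $A$ (resp. $B$), the splitting of the cover being a consequence of the nullity of $A$ (resp. $B$) in the knot exterior. Write $Y = \tilde X \setminus Int(\tilde A) = \tilde X' \setminus Int(\tilde B)$ for the common complement.

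The Mayer-Vietoris step will go through identically, provided one can split $H_1(\partial A; \Z) \cong H_1(A; \Z) \oplus \mathcal{L}_A^\Z$ over $\Z$, where $\mathcal{L}_A^\Z$ denotes the kernel of $H_1(\partial A; \Z) \to H_1(A; \Z)$. This is the only step requiring care. It works because a $\Z$HH $A$ has $H_1(A; \Z) \cong \Z^g$ free abelian, so the short exact sequence
$$0 \to \mathcal{L}_A^\Z \to H_1(\partial A; \Z) \to H_1(A; \Z) \to 0$$
splits, and similarly for $B$. Applied in the cover, this splitting yields $H_1(\tilde X; \Z) \cong H_1(Y; \Z)/(\Zt \otimes \mathcal{L}_A^\Z)$ and $H_1(\tilde X'; \Z) \cong H_1(Y; \Z)/(\Zt \otimes \mathcal{L}_B^\Z)$. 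An integral null LP-surgery identifies $\mathcal{L}_A^\Z$ with $\mathcal{L}_B^\Z$ by definition, so the two quotients coincide and produce the canonical isomorphism $\xi : \Al_\Z(K) \to \Al_\Z(K')$.

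Finally, to check that $\xi$ preserves the Blanchfield form, I would rerun the linking-number computation of Lemma \ref{lemmacons1} without change: for null-homologous knots $J, J' \subset Y$ satisfying the required disjointness, decompose a Seifert surface $\Sigma$ of $J$ as $\Sigma_1 \cup \Sigma_2$ with $\Sigma_1 \subset Y$ and $\Sigma_2 \subset \tilde A$, use the integral identification $\mathcal{L}_A^\Z = \mathcal{L}_B^\Z$ to replace $\Sigma_2$ by an integer $2$-chain $\Sigma_3 \subset \tilde B$ with $\partial \Sigma_3 = n \partial \Sigma_2$ for some positive integer $n$, and deduce $lk_{\tilde X'}(J, J') = lk_{\tilde X}(J, J')$ from $nJ = \partial(n \Sigma_1 \cup \Sigma_3)$. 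The extension to the whole module by scalar multiplication is unchanged. The only subtle point in the passage from $\Q$ to $\Z$ is thus the integral Mayer-Vietoris splitting, which as noted is immediate from the freeness of $H_1(A; \Z)$ for a $\Z$HH.
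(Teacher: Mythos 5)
Your proposal is correct and coincides with the paper's own treatment: the paper literally proves Lemma \ref{lemmaintA} by the remark that ``the previous proof still works, when $\Q$ is replaced by $\Z$.'' You go slightly further by pinpointing and justifying the one step where integrality could matter --- the splitting $H_1(\partial A;\Z)\cong H_1(A;\Z)\oplus\mathcal{L}_A^{\Z}$, valid because $H_1(A;\Z)$ is free for a $\Z$HH (and $i_*$ is onto) --- which is a worthwhile clarification but not a different argument.
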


\begin{lemma} \label{lemmacons2}
 Let $(M,K,\Sigma,\fs,V)$ and $(M',K',\Sigma',\fs',V')$ be $\Q$SK-systems. If $V$ and $V'$ are rationally S-equivalent, 
then any S-equivalence from $V$ to $V'$ induces a canonical $\tau$-class of isomorphisms from $\Al(K)$ to $\Al(K')$ which 
preserve the Blanchfield form.
\end{lemma}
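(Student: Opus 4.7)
The plan is to reduce to a single elementary rational S-equivalence (symplectic congruence, row or column enlargement, or row or column reduction), produce for each type a canonical isomorphism $\Al(K)\to\Al(K')$ preserving the Blanchfield form, and then observe that composing these elementary isomorphisms for an arbitrary S-equivalence yields a well-defined $\tau$-class. I work throughout with the matrix presentation recalled in the introduction: $\Al(K)$ is generated by the $b_i$ modulo the columns of $tV-V^t$, with Blanchfield form $\phi_K(b_i,b_k)=(1-t)\bigl((tV-V^t)^{-1}\bigr)_{ki}\bmod\Qt$.

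For a rational symplectic congruence $V'=PVP^t$ the identity $tV'-V'^t=P(tV-V^t)P^t$ shows that the columns of $tV'-V'^t$ generate the same submodule of $\Qt^{2g}$ as $P$ applied to the columns of $tV-V^t$. Hence multiplication by $P$ descends to an isomorphism $\xi\colon\Al(K)\to\Al(K')$ given on generators by $b_i\mapsto\sum_j P_{ji}b'_j$. Using $(tV'-V'^t)^{-1}=(P^t)^{-1}(tV-V^t)^{-1}P^{-1}$, together with the fact that the entries of $P$ are rational numbers (so that no involution $t\mapsto t^{-1}$ appears when passing scalars through the second slot of $\phi_{K'}$), a direct computation reduces $\phi_{K'}(\xi(b_i),\xi(b_k))$ to $\phi_K(b_i,b_k)$.

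For a row enlargement
\[W=\begin{pmatrix}0&0&0\\1&x&\rho^t\\0&\rho&V\end{pmatrix},\]
I would write $tW-W^t$ in block form and apply Schur complementation to the top-left $2\times 2$ block $A=\bigl(\begin{smallmatrix}0&-1\\t&(t-1)x\end{smallmatrix}\bigr)$, which has determinant $t$ and is therefore invertible over $\Qt$. The crucial observation is that the off-diagonal blocks $B$ and $C$ are supported in such a way that $CA^{-1}B=0$; the Schur complement is then exactly $tV-V^t$, and the lower-right block of $(tW-W^t)^{-1}$ is exactly $(tV-V^t)^{-1}$. Reading the first two columns of $tW-W^t$ as relations forces $c_1=0$ and $c_0=(t-1)\sum_k\rho_k b'_k$, while the remaining columns reproduce the old relations among the $b'_i$; hence the correspondence $b_i\leftrightarrow b'_i$ defines an isomorphism $\Al(K)\to\Al(K')$. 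The lower-right block of the inverse then yields $\phi_{K'}(b'_i,b'_k)=\phi_K(b_i,b_k)$, and the remaining Blanchfield values follow by sesquilinearity from the expressions for $c_0$ and $c_1$. The column enlargement is handled in the same way (the top-left block becomes $\bigl(\begin{smallmatrix}0&-t\\1&(t-1)x\end{smallmatrix}\bigr)$, still of determinant $t$, and $CA^{-1}B$ again vanishes), and the row and column reductions are obtained by inverting the corresponding enlargement isomorphisms.

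Finally, canonicity on $\tau$-classes is checked at each elementary step: a rescaling $b_i\mapsto t^k b_i$, $b'_j\mapsto t^\ell b'_j$ changes every elementary isomorphism above by multiplication by $t^{k-\ell}$, which is an operation internal to the $\tau$-class. Composing the elementary isomorphisms for any sequence of elementary S-equivalences therefore yields a well-defined $\tau$-class of isomorphisms $\Al(K)\to\Al(K')$. The main obstacle I anticipate is the Schur-complement block-inversion underlying Blanchfield preservation for enlargements; the vanishing of $CA^{-1}B$ is the key point that makes the lower-right block of $(tW-W^t)^{-1}$ match $(tV-V^t)^{-1}$, and once that is verified the rest is formal.
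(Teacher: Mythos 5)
Your proposal is correct and follows essentially the same route as the paper: decompose into elementary S-equivalences, realize a symplectic congruence $V'=PVP^t$ by $b_i\mapsto (b'P)_i$ using $(tV'-V'^t)^{-1}=(P^t)^{-1}(tV-V^t)^{-1}P^{-1}$, and read off from the presentation matrix $tW-W^t$ of an enlargement that the two new generators are redundant ($c_1=0$ and $c_0$ determined by the others). Your Schur-complement check that $CA^{-1}B=0$ and hence that the lower-right block of $(tW-W^t)^{-1}$ equals $(tV-V^t)^{-1}$ is a correct and slightly more explicit justification of Blanchfield preservation in the enlargement case, which the paper leaves implicit.
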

\begin{proof}
 Let $(b_i)_{1\leq i\leq 2g}$ be a family of generators of $\Al(K)$ associated with $V$. 
Set $W=tV-V^t$. Recall the Blanchfield form $\phi_K$ is given by $\phi_K(b_i,b_j)=(1-t)(W^{-1})_{ji}$. 
Set $b=\begin{pmatrix} b_1 & b_2 & \dots & b_{2g} \end{pmatrix}$, 
and $r=\begin{pmatrix} r_1 & r_2 & \dots & r_{2g} \end{pmatrix}=bW$. We have:
$$\Al(K)=\frac{\bigoplus_{1\leq i\leq 2g} \Qt b_i}{\bigoplus_{1\leq j\leq 2g} \Qt r_j}.$$ 
Define the same notation with primes for the $\Q$SK-pair $(M',K')$. 

First assume that $V'=PVP^t$ for a rational symplectic matrix $P$. 
Note that $W'=PWP^t$. Define a $\Qt$-isomorphism:
$$\begin{array}{cccc} \tilde{\xi} : & \bigoplus_{1\leq i\leq 2g} \Qt b_i & \to & \bigoplus_{1\leq i\leq 2g} \Qt b'_i \\
  & b_i & \mapsto & (b'P)_i \end{array}.$$
We have $\tilde{\xi}(r_j)=(b'PW)_j=(r'(P^t)^{-1})_j$, thus 
$\tilde{\xi}(\bigoplus_{1\leq j\leq 2g} \Qt r_j)=\bigoplus_{1\leq j\leq 2g} \Qt r'_j$. 
Hence $\tilde{\xi}$ induces an isomorphism $\xi : \Al(K) \to \Al(K')$.
Now, we have: 
\begin{eqnarray*}
 \phi_{K'}(\xi(b_i),\xi(b_j)) &=& \phi_{K'}((b'P)_i,(b'P)_j) \\
 &=& \sum_{k,l}p_{ki}p_{lj}(1-t)((W')^{-1})_{lk} \\
 &=& (1-t)\lbp P^t((P^t)^{-1}W^{-1}P^{-1})P\rbp_{ji} \\
 &=& \phi_K(b_i,b_j).
\end{eqnarray*}

It remains to treat the case of an enlargement. Assume $V=\begin{pmatrix} 0 & 0 & 0 \\ 1 & x & \rho^t \\ 0 & \rho & V' \end{pmatrix}$. 
Then: $$W=\begin{pmatrix} 0 & -1 & 0 \\ t & x(t-1) & (t-1)\rho^t \\ 0 & (t-1)\rho & W' \end{pmatrix}.$$ Thus $b_2$ is trivial, 
and $b_1$ is a linear combination over $\Qt$ of the $b_i$ for $3\leq i\leq 2g$. Hence there is an isomorphism 
$(\Al(K),\phi_K)\cong(\Al(K'),\phi_{K'})$ which identifies $b_i$ with $b'_{i-2}$ for $3\leq i\leq 2g$. 
Proceed similarly for a column enlargement. 

Since the families $(b_i)_{1\leq i\leq 2g}$ and $(b'_i)_{1\leq i\leq 2g}$ are determined up to multiplication by a power of $t$, 
we have associated a $\tau$-class of isomorphisms to each elementary S-equivalence. For a general rational S-equivalence, 
just compose the $\tau$-classes associated with the elementary S-equivalences. 
\end{proof}

The previous proof still works, when $\Q$ is replaced by $\Z$. Therefore:
\begin{lemma} \label{lemmacons2Z}
 Let $(M,K,\Sigma,\fs,V)$ and $(M',K',\Sigma',\fs',V')$ be $\Z$SK-systems. If $V$ and $V'$ are integrally S-equivalent, 
then any integral S-equivalence from $V$ to $V'$ induces a canonical $\tau$-class of isomorphisms from $\Al_\Z(K)$ to $\Al_\Z(K')$ which 
preserve the Blanchfield form.
\end{lemma}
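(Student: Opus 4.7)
The plan is to rerun the proof of Lemma~\ref{lemmacons2} verbatim, replacing $\Q$ by $\Z$ and $\Qt$ by $\Zt$ throughout, and check at each step that everything remains valid over the smaller coefficient ring. The $\Zt$-module presentation $\Al_\Z(K)=\bigoplus_i \Zt b_i/\bigoplus_j\Zt r_j$ with $r=bW$ and $W=tV-V^t$ was already established in the Seifert matrix discussion, and one still has $\phi_K(b_i,b_j)=(1-t)(W^{-1})_{ji}$.

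For an integral symplectic congruence $V'=PVP^t$, define $\tilde\xi:\bigoplus\Zt b_i\to\bigoplus\Zt b'_i$ by $b_i\mapsto(b'P)_i$; this is well defined over $\Zt$ because $P$ is integral. The essential integral point is that $\tilde\xi$ is an isomorphism of free $\Zt$-modules, for which one needs $P^{-1}\in\mathcal{M}_{2g}(\Z)$: this follows from $PJP^t=J$ and $\det(J)=1$, which force $\det(P)=\pm 1$ (the paper already notes this). The identity $\tilde\xi(r_j)=(r'(P^t)^{-1})_j$ then shows that the relation submodule is sent to the relation submodule, so $\tilde\xi$ descends to a $\Zt$-isomorphism $\xi:\Al_\Z(K)\to\Al_\Z(K')$. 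The Blanchfield computation $\phi_{K'}(\xi(b_i),\xi(b_j))=\phi_K(b_i,b_j)$ is a formal matrix identity and does not depend on the coefficient ring.

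For an integral row enlargement $V=\begin{pmatrix} 0 & 0 & 0 \\ 1 & x & \rho^t \\ 0 & \rho & V' \end{pmatrix}$ with $x\in\Z$, $\rho\in\Z^{2g}$, the same computation as before gives $W$ with first column $(0,t,0,\dots,0)^t$ and second column $(-1,(t-1)x,(t-1)\rho^t)^t$. Since $t$ is a unit in $\Zt$, the relation $r_1=tb_2$ forces $b_2=0$; the relation $r_2$ then expresses $b_1$ as a $\Zt$-linear combination of $b_3,\dots,b_{2g+2}$. Thus $b_3,\dots,b_{2g+2}$ generate $\Al_\Z(K)$ under exactly the relations recorded by $W'$, and the map $b_i\mapsto b'_{i-2}$ is a $\Zt$-isomorphism $\Al_\Z(K)\cong\Al_\Z(K')$. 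Preservation of the Blanchfield form follows from the block structure of $W^{-1}$ matching $(W')^{-1}$ on the surviving rows and columns. Column enlargements are handled analogously, and for a general integral S-equivalence one composes the $\tau$-classes obtained for the elementary steps; the $\tau$-ambiguity is unchanged because the power $t^k$ is a unit in $\Zt$.

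The only point where the rational argument could conceivably fail over $\Z$ is the invertibility of the symplectic change-of-basis matrix, and this is precisely why we restrict to integral symplectic congruences in the definition of integral S-equivalence; once that is noted, there is no real obstacle and the proof goes through mechanically.
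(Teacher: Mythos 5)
Your proposal is correct and follows exactly the paper's route: the paper's entire proof of this lemma is the remark that the proof of Lemma \ref{lemmacons2} still works when $\Q$ is replaced by $\Z$, and the points you verify (integral invertibility of $P$ via $\det P=\pm1$, the relation $r_1=tb_2$ killing $b_2$ because $t$ is a unit in $\Zt$, and composition of $\tau$-classes) are precisely the checks that remark implicitly relies on. Nothing further is needed.
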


    \section{Relating Seifert matrices} \label{secSeq}

In this section, we prove the next proposition, which implies Proposition \ref{propSeq} for invertible Seifert matrices $V$ and $V'$. 
We end the section by deducing the general case. 
\begin{proposition} \label{propcasinv}
 Let $(M,K,\Sigma,\fs,V)$ and $(M',K',\Sigma',\fs',V')$ be two $\Q$SK-systems. Let $\xi : \Al(K)\to\Al(K')$ be an isomorphism 
which preserves the Blanchfield form. Let $\bs=(b_i)_{1\leq i\leq 2g}$ (resp. $\bs'=(b'_i)_{1\leq i\leq 2g}$) be a family of generators 
of $\Al(K)$ (resp. $\Al(K')$) associated with $V$ (resp. $V'$). If $V$ and $V'$ are invertible, then $(\bs)$ and $(\bs')$ are $\Q$-bases 
of $\Al(K)$ and $\Al(K')$ respectively. Let $P$ be the matrix of $\xi$ with respect to the bases $(\bs)$ and $(\bs')$. 
Then $P$ is symplectic and $V'=PVP^t$.
\end{proposition}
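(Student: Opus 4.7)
The plan is to reformulate everything on $\Q^{2g}$ equipped with the $t$-action coming from $V$, and then extract the symplectic condition from a coprimeness argument.

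\emph{Step 1 (bases and intertwining).} When $V$ is invertible, $\det(tV-V^t)$ has degree $2g$, giving $\dim_\Q\Al(K)=2g$. In row-vector form $b=(b_1,\dots,b_{2g})$, the relations $bW=0$ with $W=tV-V^t$ rearrange to $tb=bT$ with $T=V^tV^{-1}$, which expresses each $tb_i$ as a $\Q$-linear combination of the $b_j$. Hence $\bs$ spans $\Al(K)$ over $\Q$, so by dimension count it is a $\Q$-basis; similarly $\bs'$ is a $\Q$-basis with associated matrix $T'=(V')^t(V')^{-1}$. Writing $\xi(b)=b'P$ as in the proof of Lemma \ref{lemmacons2}, the $\Qt$-linearity of $\xi$ translates into the intertwining relation $PT=T'P$.

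\emph{Step 2 (key identity).} The crucial computation uses $V-V^t=J$ to rewrite $W=(tI-T)V$ and $V^{-1}=J^{-1}(I-T)$ (since $T-I=-JV^{-1}$). Combining these gives
\[
W^{-1}=J^{-1}(I-T)(tI-T)^{-1}=J^{-1}\bigl(I+(1-t)(tI-T)^{-1}\bigr),
\]
and the analogous formula for $W'$. This splits $W^{-1}$ into a constant piece $J^{-1}$, killed modulo $\Qt$ after multiplication by $(1-t)$, plus a principal part $(1-t)J^{-1}(tI-T)^{-1}$ carrying all the Blanchfield data.

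\emph{Step 3 (extracting symplecticity and conjugation).} Preservation of the Blanchfield form translates into $(1-t)(W^{-1}-P^t(W')^{-1}P)\in\mathcal{M}_{2g}(\Qt)$. Substituting the Step 2 identity and using $(tI-T')^{-1}P=P(tI-T)^{-1}$ (consequence of $PT=T'P$), the expression simplifies to $(1-t)A+(1-t)^2 A(tI-T)^{-1}$ with $A=J^{-1}-P^tJ^{-1}P$. Since $(1-t)A$ is already polynomial, $(1-t)^2 A(tI-T)^{-1}$ must be too. Now $I-T=-JV^{-1}$ is invertible, so $1$ is not an eigenvalue of $T$ and $(1-t)$ is coprime to $\det(tI-T)$; polynomiality therefore promotes to that of $A(tI-T)^{-1}$ itself. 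A degree argument in $t$ (constant $A$ versus linear $tI-T$) then forces $A=0$, i.e.\ $P^tJ^{-1}P=J^{-1}$, which inverts to the symplectic relation $PJP^t=J$. Finally $V=(I-T)^{-1}J$, $V'=(I-T')^{-1}J$, combined with $P(I-T)^{-1}=(I-T')^{-1}P$ and $PJP^t=J$, yield $PVP^t=(I-T')^{-1}J=V'$.

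The main obstacle is finding the algebraic pivot in Step 2. Without the splitting identity coming from $V-V^t=J$, the Blanchfield data modulo $\Qt$ looks noisy and is hard to compare to the constant symplectic form $J$; with it, the condition collapses to polynomiality of $A(tI-T)^{-1}$ for a constant matrix $A$, and both the symplectic condition and the conjugation $V'=PVP^t$ fall out of the resulting relations $A=0$ and $PT=T'P$.
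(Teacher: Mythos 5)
Your proof is correct, and its overall architecture coincides with the paper's: identify $\Al(K)$ with $\Q^{2g}$ via $\bs$ (the paper's Lemma \ref{lemmaQbase}), observe that the $t$-action is $T=V^tV^{-1}$ and that $V=(I_{2g}-T)^{-1}J$ (Corollary \ref{corV}), obtain $PT=T'P$ from $\Qt$-linearity of $\xi$, and reduce the whole statement to $P^tJP=J$. Where you genuinely diverge is in how that symplectic relation is extracted from preservation of the Blanchfield form. The paper introduces a $\Q$-linear functional $\chi$ on $\Q(t)/\Qt$ (zero on $\Q[t,t^{-1},(1-t)^{-1}]$, $E\mapsto E'(1)$ on proper fractions with denominator prime to $t(1-t)$), proves $\chi((t-1)E)=E(1)$ (Lemma \ref{lemmaprev}), and computes that $\chi\circ\phi_K$ has matrix $J$ in the basis $\bs$ (Lemma \ref{lemmaS}); since $\xi$ preserves $\phi$, it preserves $\chi\circ\phi$, and $P^tJP=J$ falls out with no further computation. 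You instead split $W^{-1}=J^{-1}+(1-t)J^{-1}(tI-T)^{-1}$ explicitly and force $A=J^{-1}-P^tJ^{-1}P$ to vanish by coprimality of $(1-t)$ with $\det(tI-T)$ plus a degree count. Both routes exploit the same underlying fact --- that the Blanchfield form ``evaluated at $t=1$'' recovers the intersection form --- but the paper's functional makes the invariance under $\xi$ tautological, while your version is more computational and avoids the auxiliary direct-sum decomposition of $\Q(t)$. One point you should make explicit in the final degree argument: to exclude Laurent-polynomial entries with negative powers of $t$, note that $T=V^tV^{-1}$ is invertible, so $\det(tI-T)$ has nonzero constant term; the entries of $A(tI-T)^{-1}$ are then quotients of polynomials of degree at most $2g-1$ by a degree-$2g$ polynomial not vanishing at $0$, so membership in $\Qt$ forces them to be genuine polynomials tending to $0$ at infinity, hence zero, and $A=0$ follows.
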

This proposition specifies a result of Trotter \cite[Proposition 2.12]{Trotter}. 

\begin{lemma} \label{lemmaQbase}
Let $(M,K,\Sigma,\fs,V)$ be a $\Q$SK-system. Let $\bs=(b_i)_{1\leq i\leq 2g}$ be a family of generators of $\Al(K)$ 
associated with $V$. If $V$ is invertible, then $\bs$ is a $\Q$-basis of $\Al(K)$, 
and the action of $t$ is given by the matrix $V^t\,V^{-1}$ with respect to the basis $\bs$. 
\end{lemma}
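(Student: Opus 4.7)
My plan is to read off the action of $t$ on the $b_i$ directly from the presentation of $\Al(K)$ recalled in the introduction, then use the fact that $V^tV^{-1}$ has rational entries to show the $\Q$-span of $\bs$ is $\Qt$-stable, and finally count $\Q$-dimensions to promote this spanning statement to a basis statement.

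To extract the $t$-action, I would write $b = (b_1\ b_2\ \cdots\ b_{2g})$ as a row vector and rephrase the relations $\partial S_j = \sum_i (tV - V^t)_{ij}\,b_i = 0$ as $b(tV - V^t) = 0$, i.e.\ $tbV = bV^t$. Invertibility of $V$ rearranges this to $tb = b\,V^tV^{-1}$, or componentwise $tb_i = \sum_j (V^tV^{-1})_{ji}\, b_j$. With the usual convention that the matrix of multiplication by $t$ in the basis $\bs$ has $(j,i)$-entry equal to the coefficient of $b_j$ in $tb_i$, the matrix of the $t$-action is therefore $V^tV^{-1}$, which gives the second assertion of the lemma as soon as $\bs$ is known to be a basis.

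Since $V^tV^{-1}$ and its inverse $V V^{-t}$ both have rational entries, the $\Q$-subspace of $\Al(K)$ spanned by $\bs$ is stable under multiplication by $t$ and by $t^{-1}$, hence is a $\Qt$-submodule. It contains the $\Qt$-generators $b_i$, so it equals all of $\Al(K)$, and thus $\bs$ spans $\Al(K)$ over $\Q$.

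To finish, I would check that $\dim_\Q \Al(K) = 2g$, which forces the linear independence of $\bs$. Using $V \in GL_{2g}(\Q) \subset GL_{2g}(\Qt)$, I factor the presentation matrix as $tV - V^t = V(tI - V^{-1}V^t)$; since left-multiplication by the unit $V^{-1}$ preserves the cokernel, $\Al(K) \cong \Qt^{2g}/(tI - V^{-1}V^t)\Qt^{2g}$. Because $V^{-1}V^t$ is invertible over $\Q$, multiplication by $t$ already acts invertibly on $\Q^{2g}$ via $V^{-1}V^t$, and the standard identification $\Q[t]^{2g}/(tI - A) \cong \Q^{2g}$ extends to $\Qt$-coefficients thanks to this invertibility, identifying the cokernel with $\Q^{2g}$. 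Hence $\dim_\Q\Al(K) = 2g$. The only real care needed throughout is bookkeeping for row versus column conventions; the substantive input is simply the invertibility of $V$.
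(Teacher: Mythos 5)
Your proof is correct and follows essentially the same route as the paper: both amount to identifying $\Al(K)$ with $\Q^{2g}$ by evaluating $t$ at $V^tV^{-1}$ (equivalently, the cokernel identification $\Qt^{2g}/(tI-A)\Qt^{2g}\cong\Q^{2g}$ for invertible $A$), using the invertibility of $V$ in exactly the same way. The only difference is presentational: the paper verifies the kernel of this evaluation map by an explicit telescoping computation, which is precisely the content of the ``standard identification'' you cite, and your row-vector derivation of $tb=bV^tV^{-1}$ matches the paper's conventions.
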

\begin{proof}
We have: $$\Al(K)=\frac{\Oplus{1\leq i\leq 2g} \Qt b_i}{\Oplus{1\leq j\leq 2g} \Qt r_j},$$ 
where $r_j=\sum_{1\leq i\leq 2g} (tV-V^t)_{ij}b_i$. Represent the elements of $(\Qt)^{2g}=\Oplus{1\leq i\leq 2g} \Qt b_i$ 
(resp. $\Q^{2g}$) by column vectors giving their coordinates in the basis $\bs$ (resp. in the canonical basis). 
Define a $\Q$-linear map $\varphi : (\Qt)^{2g} \to \Q^{2g}$ by $t^kX \mapsto (V^t\,V^{-1})^kX$ for all vector $X$ with rational coefficients. 
Let us prove that $\varphi$ induces a $\Q$-isomorphism from $\Al(K)$ to $\Q^{2g}$. 

It is easy to see that $\Qt r_j\hspace{-2pt}\subset\hspace{-2pt} ker(\varphi)$ for all $j$. 
Let $u\in ker(\varphi)$. Write $u\hspace{-3pt}=\hspace{-3pt}\sum_{p\leq k\leq q} t^k X_k$, where the $X_k$ are column vectors with rational coefficients. 
Since $\sum_{p\leq k\leq q} (V^t\,V^{-1})^k X_k=0$, we have $X_p=-\sum_{p<k\leq q}(V^t\,V^{-1})^{k-p}X_k$. 
Thus:
\begin{eqnarray*}
	u &=& \sum_{p<k\leq q} (t^k X_k - t^p(V^t\,V^{-1})^{k-p}X_k) \\
	 &=& (tV-V^t).\sum_{p<k\leq q} \sum_{p<i\leq k} t^{i-1}(V^{-1}V^t)^{k-i}V^{-1}X_k
\end{eqnarray*}
Hence $u\in\Oplus{1\leq j\leq 2g} \Qt r_j$, and it follows that $ker(\varphi)=\Oplus{1\leq j\leq 2g} \Qt r_j$. 
\end{proof}

\begin{corollary} \label{corV}
 Let $(M,K,\Sigma,\fs,V)$ be a $\Q$SK-system. Let $\bs=(b_i)_{1\leq i\leq 2g}$ be a family of generators of $\Al(K)$ 
associated with $V$. Assume $V$ is invertible. Let $T$ be the matrix of the multiplication by $t$ in the basis $\bs$. 
Then $V=(I_{2g}-T)^{-1}J$.
\end{corollary}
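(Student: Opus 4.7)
The plan is to obtain the formula by a direct matrix manipulation built on Lemma \ref{lemmaQbase}, using only the defining identity $V-V^t=J$ of a Seifert matrix. Nothing module-theoretic is needed beyond what is already contained in that lemma.

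First, I would apply Lemma \ref{lemmaQbase} to identify the matrix of multiplication by $t$: since $V$ is invertible, we have $T = V^t V^{-1}$. Substituting this into $I_{2g}-T$ and factoring on the right by $V^{-1}$ gives
\[
I_{2g} - T \;=\; I_{2g} - V^t V^{-1} \;=\; (V - V^t)\,V^{-1} \;=\; J V^{-1},
\]
where the last equality is exactly the Seifert matrix relation $V-V^t=J$ recalled in the introduction.

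Before inverting, I need to check that $I_{2g}-T$ is genuinely invertible, so that the stated formula makes sense. This is automatic from the computation above: $J$ is invertible (it is block-diagonal with $2\times 2$ blocks of determinant $1$), $V$ is invertible by hypothesis, and so $JV^{-1}$ is invertible with inverse $V J^{-1}$. Hence $(I_{2g}-T)^{-1} = V J^{-1}$, and multiplying on the right by $J$ yields the desired identity $V = (I_{2g}-T)^{-1} J$.

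There is really no obstacle here: once Lemma \ref{lemmaQbase} supplies the expression $T = V^t V^{-1}$, the corollary is a one-line algebraic consequence of $V-V^t = J$. The only point to be careful about is the order of factors — writing $(V-V^t)V^{-1}$ rather than $V^{-1}(V-V^t)$ — so that the relation $V-V^t=J$ can be applied cleanly and the result comes out as $JV^{-1}$ on the correct side.
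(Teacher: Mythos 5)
Your proof is correct and follows essentially the same route as the paper: both apply Lemma \ref{lemmaQbase} to get $T=V^tV^{-1}$ and then use $V-V^t=J$ (the paper writes it as $(I-T)V=J$, you as $I-T=JV^{-1}$, which is the same computation). Your explicit remark that $I_{2g}-T$ is invertible is a harmless extra check that the paper leaves implicit.
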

\begin{proof}
 By Lemma \ref{lemmaQbase}, $T=V^tV^{-1}$. Hence $(I-T)V=V-V^t=J$.
\end{proof}

Consider $\Q(t)$ as the direct sum over $\Q$ of $\Lambda=\Q[t,t^{-1},(1-t)^{-1}]$ and the subspace $\mathcal{E}$ consisting of $0$ and 
all proper fractions with denominator prime to $t$ and $(1-t)$, where proper means that the degree of the fraction numerator 
is strictly lower than the degree of the denominator. Define a $\Q$-linear function $\chi$ on 
$\Q(t)$ by $\chi(E)=E'(1)$ if $E\in \mathcal{E}$ and $\chi(E)=0$ if $E\in\Lambda$. Since $\chi$ vanishes on $\Qt$,
we may also consider it as a function on $\frac{\Q(t)}{\Qt}$.
\begin{lemma} \label{lemmaprev} 
 For $E\in \mathcal{E}$, $\chi((t-1)E)=E(1)$.
\end{lemma}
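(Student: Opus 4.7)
The plan is to exploit the characterization of $\chi$ on the $\mathcal{E}$-component as "evaluate the derivative at $1$" and then use the product rule. Let $E = P/Q \in \mathcal{E}$ with $P, Q \in \Q[t]$, $\deg P < \deg Q$, and $Q$ coprime to $t(1-t)$. Multiplying by $(t-1)$ gives $(t-1)E = (t-1)P/Q$, whose numerator has degree at most $\deg Q$.

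The key observation is that the $\Lambda$-component of $(t-1)E$ in the direct sum $\Q(t) = \Lambda \oplus \mathcal{E}$ is a constant. Since $Q$ remains coprime to $t$ and $(1-t)$, no partial-fraction contribution with a pole at $0$ or at $1$ appears; the only piece of $(t-1)E$ that can lie in $\Lambda$ comes from the polynomial quotient produced by dividing $(t-1)P$ by $Q$, and because $\deg((t-1)P) \leq \deg Q$ this quotient is a constant $c \in \Q$ (possibly zero). So I would write $(t-1)E = c + F$ with $F \in \mathcal{E}$, noting that evaluating at $t=1$ gives $c = -F(1)$ (though this identity is not needed for the final computation).

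Then $\chi((t-1)E) = \chi(c) + \chi(F) = 0 + F'(1)$ by the definition of $\chi$. Differentiating $F(t) = (t-1)E(t) - c$ yields $F'(t) = E(t) + (t-1)E'(t)$, and setting $t=1$ makes the second term vanish, leaving $F'(1) = E(1)$. Combining, $\chi((t-1)E) = E(1)$, as required.

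The only real subtlety is verifying that the $\Lambda$-part of $(t-1)E$ is indeed just a constant rather than a more general element of $\Q[t, t^{-1}, (1-t)^{-1}]$; this uses the coprimality of $Q$ with $t(1-t)$ together with the degree bound. Once that is in hand, the product rule makes the whole statement immediate.
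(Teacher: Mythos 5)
Your proof is correct and follows essentially the same route as the paper: both decompose $(t-1)E$ as a rational constant plus an element of $\mathcal{E}$ (the paper phrases this as a small general observation about fractions with numerator degree at most the denominator degree and denominator prime to $t$ and $1-t$), conclude that $\chi((t-1)E)$ equals the derivative of $(t-1)E$ at $1$, and finish with the product rule. No issues.
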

\begin{proof}
If $F\in\Q(t)$ has denominator prime to $t$ and $1-t$ and numerator of degree less than or equal
to the degree of its denominator, then $F$ can be written as the sum of a rational constant and of an element of $\mathcal{E}$. 
Hence $\chi(F)=F'(1)$. Apply this to $F=(t-1)E$.
\end{proof}

\begin{lemma} \label{lemmaS}
Let $(M,K,\Sigma,\fs,V)$ be a $\Q$SK-system. Let $(b_i)_{1\leq i\leq 2g}$ be a family of generators of $\Al(K)$ 
associated with $V$. Define a matrix $S$ by $S_{ij}=\chi(\phi_K(b_j,b_i))$. Then $S=-(V-V^t)^{-1}=J$. 
\end{lemma}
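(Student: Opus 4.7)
The plan is to combine the explicit formula for the Blanchfield form recalled in the introduction with the structure of the decomposition $\Q(t) = \Lambda \oplus \mathcal{E}$ underlying the definition of $\chi$. Writing $F_{ij}(t) = ((tV - V^t)^{-1})_{ij} \in \Q(t)$, the formula $\phi_K(b_j,b_i) = (1-t) F_{ij}(t) \pmod{\Qt}$ gives $S_{ij} = \chi((1-t) F_{ij})$. Invertibility of $V$, which is in force throughout this section per the hypotheses of Proposition \ref{propcasinv}, will be crucial.

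The main technical step is to verify that each $F_{ij}$ lies in $\mathcal{E}$. By Cramer's rule, $F_{ij}$ is the ratio of a cofactor of $tV - V^t$ over $\det(tV - V^t)$; the cofactor is a polynomial of degree at most $2g-1$, while the denominator has degree exactly $2g$ with leading coefficient $\det(V) \neq 0$, so $F_{ij}$ is a proper fraction. Moreover, $\det(tV - V^t)$ does not vanish at $t=0$, where it equals $(-1)^{2g}\det(V) = \det(V) \neq 0$, nor at $t=1$, where it equals $\det(V - V^t) = \det(J) = 1$. Hence the denominator of $F_{ij}$ is coprime to $t$ and $1-t$, and $F_{ij} \in \mathcal{E}$.

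Then Lemma \ref{lemmaprev} gives $\chi((t-1) F_{ij}) = F_{ij}(1)$, whence
$$S_{ij} = \chi((1-t) F_{ij}) = -F_{ij}(1) = -((V - V^t)^{-1})_{ij} = -(J^{-1})_{ij}.$$
A direct block-by-block computation yields $J^{-1} = -J$, since each $2\times 2$ diagonal block $\begin{pmatrix} 0 & -1 \\ 1 & 0 \end{pmatrix}$ of $J$ has inverse $\begin{pmatrix} 0 & 1 \\ -1 & 0 \end{pmatrix}$. Therefore $S_{ij} = J_{ij}$ for all $i,j$, and $S = -(V - V^t)^{-1} = J$.

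The only delicate point is the verification that $F_{ij} \in \mathcal{E}$; the invertibility assumption on $V$ enters precisely here, to guarantee both that $\det(tV-V^t)$ attains its maximal degree $2g$ and that it is nonzero at $t=0$. Once $F_{ij} \in \mathcal{E}$ is established, the conclusion is immediate from Lemma \ref{lemmaprev} together with the evaluation of $(tV-V^t)^{-1}$ at $t=1$.
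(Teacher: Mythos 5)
Your proof is correct and follows essentially the same route as the paper's: both reduce to checking that $\Delta(t)=\det(tV-V^t)$ has full degree $2g$ and is nonzero at $t=0$ and $t=1$ (using invertibility of $V$), so that Lemma \ref{lemmaprev} applies and gives $S_{ij}=-((tV-V^t)^{-1})_{ij}(1)=-(J^{-1})_{ij}=J_{ij}$. You merely spell out via Cramer's rule what the paper states more tersely.
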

\begin{proof}
We have $S_{ij}=-\chi((t-1)((tV-V^t)^{-1})_{ij})$. Let $\Delta(t)=\det(tV-V^t)$ be the Alexander polynomial of $(M,K)$. 
Then $\Delta(1)\neq 0$, and since $V$ is invertible, $\Delta(0)\neq 0$ and the degree of $\Delta$ is equal to the size of the matrix $V$. 
Hence, by Lemma \ref{lemmaprev}, $S_{ij}=-((tV-V^t)^{-1})_{ij}(1)$. Conclude with $V-V^t=J$ and $J^{-1}=-J$.
\end{proof}

\proofof{Proposition \ref{propcasinv}}
Let $T$ (resp. $T'$) be the matrix of the action of $t$ in the basis $\bs$ (resp. $\bs'$). 
By Corollary \ref{corV}, $V=(I_{2g}-T)^{-1}J$ and $V'=(I_{2g}-T')^{-1}J$. 
Since $\xi$ preserves the action of $t$, we have $PT=T'P$. 
Since $\xi$ preserves the Blanchfield form, Lemma \ref{lemmaS} implies $J=P^tJP$. 
It follows that $V'=PVP^t$. 
\fin

\begin{lemma} \label{lemmarealmat}
For any matrix $V\in\mathcal{M}_{2g}(\Q)$, with $g>0$, satisfying $V-V^t=J$, there exists a $\Q$SK-pair $(M,K)$ that admits $V$ as a Seifert matrix. 
If $V\in\mathcal{M}_{2g}(\Z)$, then $M$ can be chosen to be $S^3$. 
\end{lemma}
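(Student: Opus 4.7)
The plan is a direct geometric construction: build an embedded genus $g$ Seifert surface in $S^3$ when $V$ is integral, and otherwise introduce rational Dehn surgery corrections to pass from $S^3$ to a suitable $\Q$HS.

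For the integer case, I would first construct a standard model $\Sigma_0 \subset S^3$ by taking a disk and attaching $2g$ unknotted bands arranged in $g$ Hopf-linked pairs so that the boundary is a single circle, with the symplectic basis $(f_i^0)$ coming from the pairs. Its Seifert matrix $V_0$ is a fixed integer matrix satisfying $V_0 - V_0^t = J$. For any $V \in \mathcal{M}_{2g}(\Z)$ with $V - V^t = J$, the difference $V - V_0$ is a symmetric integer matrix, which I realize by modifying the embedding of $\Sigma_0$: add $(V - V_0)_{ii}$ full twists to the $i$-th band to adjust the diagonal entries, and for each $i < j$ add $(V - V_0)_{ij}$ extra linkings between the $i$-th and $j$-th bands (each such linking modifies $V_{ij}$ and $V_{ji}$ by the same integer, so $V - V^t = J$ is preserved). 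The resulting surface $\Sigma \subset S^3$ has Seifert matrix exactly $V$, and $K = \partial \Sigma$ gives the required $\Z$SK-pair $(S^3, K)$.

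For general rational $V$, I would first realize some integer $V_\Z$ with $V_\Z - V_\Z^t = J$ in $S^3$ by the above construction, yielding $K \subset S^3$ with Seifert surface $\Sigma$, then introduce the (symmetric, rational) correction $V - V_\Z$ by Dehn surgery on a framed link $L \subset S^3 \setminus (\Sigma \cup K)$. The elementary building block is a small unknot $U \subset S^3 \setminus \Sigma$ linking some of the bands; a $1/n$-surgery on $U$ contributes a controllable rational shift to $lk(f_i, f_j^+)$ by the standard formula for linking change under rational surgery on an unknot. Assembling finitely many such local surgeries with appropriately chosen coefficients realizes any prescribed symmetric rational correction, and since each surgery coefficient is nonzero the resulting closed $3$-manifold $M$ is a $\Q$HS.

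The main obstacle will be controlling two global constraints simultaneously: the knot $K$ must remain null-homologous in the surgered $M$ (so that the Alexander module and hence the Seifert matrix are defined), and the rational corrections to the equivariant linkings must be exact and decoupled across matrix entries rather than interfering. I would handle the first by choosing every surgery component to have linking number zero with $K$, and the second by keeping the surgery unknots pairwise disjoint and each localized near the pair of bands it is meant to link, so that their contributions are additive. A final direct check using the presentation matrix $tV - V^t$ of the Alexander module, recalled in the Introduction, then confirms that the constructed Seifert matrix is indeed $V$.
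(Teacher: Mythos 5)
Your overall strategy is sound and is genuinely different in organization from the paper's. The paper does not build the surface in $S^3$ first: it invokes a prior realization result (\cite[Corollary 2.13]{M1}) producing a $\Q$HS $M$ containing disjoint framed curves $f_1,\dots,f_{2g}$ with all the prescribed rational linking numbers $lk(f_i,f_j)=v_{ij}$ and framings $v_{ii}$ at once, and only then joins bands around these curves (as in Figure \ref{figbands}) to form a single Seifert surface. You instead take the classical route of realizing an integral approximation $V_\Z$ by a disk-with-bands in $S^3$ and then correcting the linking form by surgery in the complement of the surface. Your route is more self-contained (it does not outsource the realization of a rational linking matrix), and your observation that $K$ stays null-homologous is even cleaner than you state: since the surgery link is disjoint from $\Sigma$, the surface $\Sigma$ survives into $M$ and $K=\partial\Sigma$ bounds there, so the Seifert matrix $lk_M(f_i,f_j^+)$ is defined and the corrections are automatically symmetric because $f_j^+$ has the same linking vector with the surgery link as $f_j$. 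The integral case and the additivity of contributions from unlinked surgery components (via $lk_M(x,y)=lk_{S^3}(x,y)-\ell_x^t\Lambda^{-1}\ell_y$ with $\Lambda$ diagonal) are fine.

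There is, however, one concrete error that makes the rational step fail as literally written: $1/n$-surgery on an unknot $U$ returns $S^3$ (a Rolfsen twist) and shifts $lk(f_i,f_j)$ by the \emph{integer} $-n\,\ell_i\ell_j$, where $\ell_i=lk(U,f_i)$. No sequence of $1/n$-surgeries on unknots can produce a non-integral entry, consistently with the fact that linking numbers in $S^3$ are integers. To realize a correction with denominator $b>1$ you must use coefficient $p/q$ with $|p|>1$ (e.g.\ $n$-surgery, i.e.\ $n/1$, contributes $-\tfrac{1}{n}\ell_i\ell_j$), so that the surgered manifold has $|H_1|=|p|>1$ and is a genuine $\Q$HS rather than $S^3$. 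With that replacement --- choosing, for each target entry, an unknot with linking vector $e_i$ or $e_i+e_j$ and an appropriate coefficient $p/q$ so that $-q/p$ equals the desired rational coefficient in the decomposition of the symmetric matrix $V-V_\Z$ into rank-one pieces $\ell\ell^t$ --- your argument goes through and proves the lemma.
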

\begin{proof}
Set $V=(v_{ij})_{1\leq i,j\leq 2g}$. 
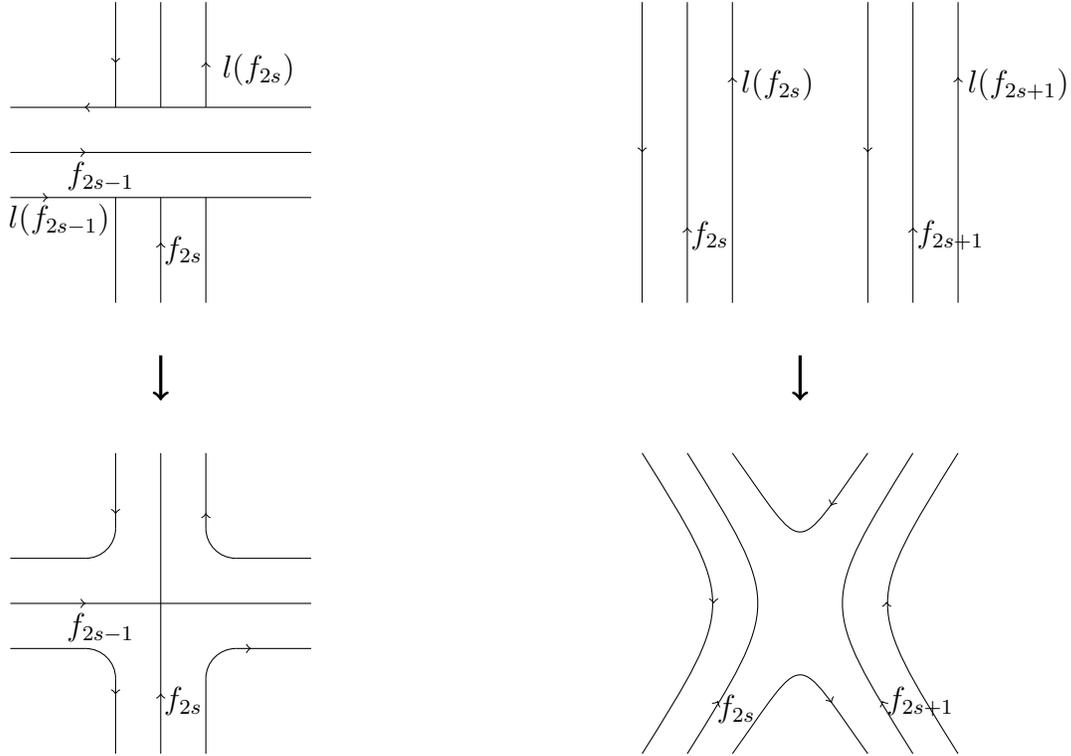
\begin{figure}[htb] 
\begin{center}
\begin{tikzpicture}
\draw (0,8.6) -- (4,8.6);
\draw (0,8) -- (4,8);
\draw (0,7.4) -- (4,7.4);
\draw [->](1,8.6) -- (0.99,8.6);
\draw [->](0.99,8) -- (1,8);
\draw (1.2,7.68) node {$f_{2s-1}$};
\draw [->](0.49,7.4) -- (0.5,7.4);
\draw (0.65,7.1) node {$l(f_{2s-1})$};
      \draw (1.4,6) -- (1.4,7.4);
      \draw (2,6) -- (2,7.4);
      \draw (2.6,6) -- (2.6,7.4);
      \draw (1.4,8.6) -- (1.4,10);
      \draw (2,8.6) -- (2,10);
      \draw (2.6,8.6) -- (2.6,10);
      \draw [->](2,6.79) -- (2,6.8);
      \draw (2.3,6.7) node {$f_{2s}$};
      \draw [->](2.6,9.19) -- (2.6,9.2);
      \draw (3.3,9.1) node {$l(f_{2s})$};
      \draw [->](1.4,9.2) -- (1.4,9.19);
\draw [very thick,->](2,5.3) -- (2,4.7);
\draw (0,2.6) -- (1,2.6);
\draw (1.4,3) -- (1.4,4);
\draw (1,2.6) arc (-90:0:0.4);
\draw [->](1.4,3.2) -- (1.4,3.19);
\draw (2.6,4) -- (2.6,3);
\draw (3,2.6) -- (4,2.6);
\draw (2.6,3) arc (-180:-90:0.4);
\draw [->](2.6,3.19) -- (2.6,3.2);
\draw (0,1.4) -- (1,1.4);
\draw (1.4,1) -- (1.4,0);
\draw (1.4,1) arc (0:90:0.4);
\draw [->](1.4,0.8) -- (1.4,0.79);
\draw (2.6,0) -- (2.6,1);
\draw (3,1.4) -- (4,1.4);
\draw (3,1.4) arc (90:180:0.4);
\draw [->](3.19,1.4) -- (3.2,1.4);
\draw (0,2) -- (4,2);
\draw [->](0.99,2) -- (1,2);
\draw (1.2,1.68) node {$f_{2s-1}$};
\draw (2,0) -- (2,4);
\draw [->](2,0.79) -- (2,0.8);
\draw (2.3,0.7) node {$f_{2s}$};
      \draw (8.4,6) -- (8.4,10);
      \draw (9,6) -- (9,10);
      \draw (9.6,6) -- (9.6,10);
      \draw [->](8.4,8.01) -- (8.4,8);
      \draw [->](9,6.99) -- (9,7);
      \draw (9.3,6.9) node {$f_{2s}$};
      \draw [->](9.6,8.99) -- (9.6,9);
      \draw (10.2,8.9) node {$l(f_{2s})$};
\draw (11.4,6) -- (11.4,10);
\draw (12,6) -- (12,10);
\draw (12.6,6) -- (12.6,10);
\draw [->](11.4,8.01) -- (11.4,8);
\draw [->](12,6.99) -- (12,7);
\draw (12.5,6.9) node {$f_{2s+1}$};
\draw [->](12.6,8.99) -- (12.6,9);
\draw (13.4,8.9) node {$l(f_{2s+1})$};
\draw [very thick,->](10.5,5.3) -- (10.5,4.7);
      \draw (9.6,4) .. controls (10.6,2.6) and (10.4,2.6) .. (11.4,4);
      \draw (9.6,0) .. controls (10.6,1.4) and (10.4,1.4) .. (11.4,0);
      \draw (9,4) .. controls (10.25,2) .. (9,0);
      \draw (8.4,4) .. controls (9.65,2) .. (8.4,0);
      \draw (12,4) .. controls (10.75,2) .. (12,0);
      \draw (12.6,4) .. controls (11.35,2) .. (12.6,0);
      \draw [->](9.4,0.68) -- (9.41,0.7);
      \draw (9.65,0.6) node {$f_{2s}$};
      \draw [->](11.6,0.68) -- (11.59,0.7);
      \draw (12.1,0.7) node {$f_{2s+1}$};
      \draw [->](10.91,0.7) -- (10.92,0.68);
      \draw [->](10.92,3.32) -- (10.91,3.3);
      \draw [->](9.35,2.01) -- (9.35,2);
      \draw [->](11.65,2.01) -- (11.65,2.02);
\end{tikzpicture}
\end{center} \caption{Gluing bands} \label{figbands}
\end{figure}
By \cite[Corollary 2.13]{M1}, there is a $\Q$HS $M$ and pairwise disjoint simple closed framed curves 
$f_i$, $1\leq i\leq 2g$, in $M$, such that $lk(f_i,f_j)=v_{ij}$ for $j\leq i$. 
Consider bands around the $f_i$, that are images of embeddings 
$h_i : [-1,1]\times S^1 \hookrightarrow M$ such that $h_i(\{0\}\times S^1)=f_i$,
and $\ell(f_i)=h_i(\{1\}\times S^1)$ is the parallel of $f_i$ such that $lk(f_i,\ell(f_i))=v_{ii}$.
Connecting these bands as indicated in Figure \ref{figbands}, we get a surface bounded by a knot $K$ which satisfies 
the required conditions. 
\end{proof}

\proofof{Proposition \ref{propSeq}}
If $V$ is non invertible, there exists $g_1\in H_1(\Sigma;\Z)$ such that $lk(g_1,\gamma^+)=0$ 
for all $\gamma\in H_1(\Sigma;\Z)$. Choose for $g_1$ a {\em primitive} element of $H_1(\Sigma;\Z)$, {\em i.e.} 
such that $g_1=kg$ with $k\in\Z$ and $g\in H_1(\Sigma;\Z)$ implies $k=\pm 1$. In any symplectic basis of $H_1(\Sigma;\Z)$, 
$g_1$ has coprime coefficients, hence there is $g_2\in H_1(\Sigma;\Z)$ such that $\langle g_1,g_2\rangle_{\Sigma}=1$, 
where $\langle .,.\rangle_{\Sigma}$ denotes the intersection form on $\Sigma$. Consider a symplectic basis 
$(g_i)_{3\leq i\leq 2g}$ of the orthogonal of $\Z g_1\oplus\Z g_2$ in $H_1(\Sigma;\Z)$ 
with respect to the intersection form. Then $(g_i)_{1\leq i\leq 2g}$ is a symplectic basis of $H_1(\Sigma;\Z)$, 
and the associated Seifert matrix $V_1$ is a row enlargement of a Seifert matrix $V_2$. Since $V$ and $V_1$ are associated with 
the same Seifert surface, they are related by a change of basis of $H_1(\Sigma;\Z)$, {\em i.e.} they are congruent. 
Hence $V$ is rationally S-equivalent to the smaller matrix $V_2$. Iterating this process, we see that $V$ is rationally S-equivalent to an 
invertible Seifert matrix $W$, where we consider that there exists an empty matrix, which is invertible. 
Similarly, $V'$ is rationally S-equivalent to an invertible Seifert matrix $W'$. The matrices $W$ and $W'$ are invertible 
Seifert matrices that define isomorphic Blanchfield forms. 

By Lemma \ref{lemmarealmat}, there are $\Q$SK-systems $(N,J,S,\etas,W)$ and $(N',J',S',\etas',W')$. 
The rational S-equivalence relation between $V$ and $W$ (resp. $V'$ and $W'$) induces the $\tau$-class of an isomorphism $\zeta: \Al(J)\to\Al(K)$ 
(resp. $\zeta': \Al(J')\to\Al(K')$). By Proposition \ref{propcasinv}, there is an invertible rational symplectic matrix $P$ such that 
$W'=PWP^t$ and $P$ induces the $\tau$-class of the isomorphism $(\zeta')^{-1}\circ\xi\circ\zeta:\Al(J)\to\Al(J')$.
\fin

    \section{Rational S-equivalence} \label{sectitleSeq}

In this section, we prove Theorem \ref{thSeq} by proving that we can realize a symplectic rational congruence by a finite 
sequence of enlargements, reductions, and integral symplectic congruences which, for given $\Q$SK-systems, induces the same $\tau$-class of 
isomorphisms between the Alexander modules as the initial congruence. We first treat a particular type of congruence matrices.

\begin{lemma} \label{lemmadelta}
 Let $V$ and $W$ be two Seifert matrices such that $\Delta_nV\Delta_n=W$, where $n$ or $\frac{1}{n}$ is a positive integer,
and $$\Delta_n=\begin{pmatrix} n &&&& \\ & \frac{1}{n} && 0 & \\ && 1 && \\ & 0 && \ddots & \\ &&&& 1 \end{pmatrix}.$$
Then there are enlargements $\tilde{V}$ of $V$ and $\tilde{W}$ of $W$ that are related by an integral symplectic congruence. 
Furthermore, if $(M,K,\Sigma,\fs,V)$ and $(M',K',\Sigma',\fs',W)$ are two $\Q$SK-systems, 
$W$ can be obtained from $V$ by a sequence of an enlargement, an integral symplectic congruence, and a reduction, 
which induces the same $\tau$-class of isomorphisms from $\Al(K)$ to $\Al(K')$ as the congruence matrix $\Delta_n$.
\end{lemma}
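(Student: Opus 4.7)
The plan is to realize the rational symplectic congruence by $\Delta_n$ through a three-step sequence (enlarge, integer symplectic congruence, reduce), using the auxiliary symplectic pair added by the enlargement as a ``buffer'' that absorbs the non-integral factor $1/n$. First, I would reduce to the case $n\in\Z_{>0}$: if instead $1/n\in\Z_{>0}$, then swapping the roles of $V$ and $W$ (which replaces $\Delta_n$ by $\Delta_n^{-1}=\Delta_{1/n}$) puts us in the case I treat. The case $n=1$ is trivial. I then assume $n\geq 2$.

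Next, I would choose a row enlargement $\tilde V = \begin{pmatrix} 0 & 0 & 0 \\ 1 & x_V & \rho_V^t \\ 0 & \rho_V & V \end{pmatrix}$ of $V$, together with a row enlargement $\tilde W$ of $W$ with parameters $x_W, \rho_W$, and explicitly construct an integer symplectic matrix $P\in Sp_{2g+2}(\Z)$ realizing $P\tilde V P^t=\tilde W$. The guiding observation is that the rational symplectic matrix $D=I_2\oplus\Delta_n\oplus I_{2g-2}$ trivially satisfies $D\tilde V D = \tilde W$ (for matching enlargement parameters) but is not integer. The freedom gained by enlargement allows one to write $P=DQ$ where $Q$ is a rational symplectic matrix that stabilizes $\tilde V$ under congruence ($Q\tilde V Q^t=\tilde V$) and for which $DQ$ has integer entries. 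Concretely, I expect to take $\rho_V=0$ and choose $\rho_W$ supported on the coordinates of the first symplectic pair of $W$, then build $P$ as a product of symplectic transvections $I+vv^tJ$ with primitive integer vectors $v$ that mix the new auxiliary pair with the first pair of $V$. The transvections can be arranged so that their combined action on the block $V$ is scaling by $n$ on the first coordinate and $1/n$ on the second, with compensating integer shears in the auxiliary pair; this is possible precisely because the ``$1/n$'' effect is absorbed by an integer move in the extra pair that subsequently vanishes upon reduction.

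Having produced $P$ and verified $P\tilde V P^t=\tilde W$ directly, the last step is to check the induced $\tau$-class on Alexander modules. Using Lemma \ref{lemmacons2}, the enlargement $V\to\tilde V$ induces an identification $\Al(K)\cong\Al(\tilde K)$ through the explicit formulas in that lemma's proof ($\tilde b_2=0$ and $\tilde b_1$ is expressed in terms of the remaining generators via $\rho_V$); the integer symplectic congruence by $P$ sends $\tilde b_i$ to $(\tilde b' P)_i$; and the reduction $\tilde W\to W$ identifies $\tilde b'_{j+2}$ with $b'_j$ modulo the extra relation determined by $\rho_W$. Tracing these identifications shows that the composition coincides, up to multiplication by a power of $t$, with the $\tau$-class that $\Delta_n$ induces via Lemma \ref{lemmacons2}. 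The main obstacle is the explicit construction of $P$: one must simultaneously satisfy that $P$ is integer symplectic, that $P\tilde V P^t$ be a genuine row enlargement of $W$ (the zero entries in the enlargement form constrain the off-diagonal blocks of $P$), and that the Alexander-module identifications match. The enlargement parameters $x_V, x_W, \rho_V, \rho_W$ must be tuned in concert with $P$ so that the identification of the new generator $\tilde b'_1$ inside $\Al(\tilde W)$ exactly cancels the non-integer coefficient $1/n$ that would otherwise appear.
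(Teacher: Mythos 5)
Your high-level plan does match the paper's: reduce to $n\in\N\setminus\{0\}$ via $\Delta_{1/n}=\Delta_n^{-1}$, enlarge both matrices, relate the enlargements by an integral symplectic congruence, reduce, and then trace the $\tau$-class through the presentation relations (in particular using the relation that kills the new generator $\tilde{b}'_2$ and the one expressing $\tilde{b}'_1$ in the remaining generators). The problem is that the entire content of the lemma is the explicit construction of $\tilde{V}$, $\tilde{W}$ and the integral symplectic $P$, and your proposal does not carry it out: you write ``I expect to take $\rho_V=0$\dots'' and then identify ``the explicit construction of $P$'' as the main obstacle. A proof that names its central step as an unresolved obstacle is not a proof, so there is a genuine gap here.

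Moreover, the specific ansatz you sketch cannot work as stated. If $\rho_V=0$, then $\tilde{V}=E\oplus V$ with $E=\begin{pmatrix}0&0\\1&x_V\end{pmatrix}$, and every entry of $P\tilde{V}P^t$ for integral $P$ lies in $\Z+\Z x_V$; but any enlargement of $W=\Delta_nV\Delta_n$ must contain the entries $s/n^2$, $\frac{1}{n}\rho$, $n\omega$ of $W$ (in the notation $V=\begin{pmatrix}p&q&\omega^t\\r&s&\rho^t\\\omega&\rho&U\end{pmatrix}$). Already for integral $V$ with $s\not\equiv 0\pmod{n^2}$ this forces $x_V\notin\Z$, and in general it forces the new row/column of the enlargements to carry non-integral, $V$-dependent data --- which is exactly what the paper's proof does: it takes a \emph{column} enlargement of $V$ with new entries $\frac{s}{n^2},\frac{r}{n},\frac{s}{n},\frac{1}{n}\rho^t$ and a \emph{row} enlargement of $W$ with new entries $p,np,\frac{r}{n},\omega^t$, related by one explicit integral symplectic $P$ supported on the first four coordinates. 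Your factorization $P=DQ$ with $Q\tilde{V}Q^t=\tilde{V}$ also forces $\tilde{W}=D\tilde{V}D$, which with $\rho_V=0$ gives $\rho_W=0$ (contradicting your intended choice of $\rho_W$), and reduces the claim to the existence of a rational point $Q$ in the stabilizer of $\tilde{V}$ with $DQ$ integral --- a nontrivial existence statement you neither prove nor make plausible. To repair the argument you would essentially have to rediscover the paper's explicit matrices (or an equivalent set), including the non-integral enlargement parameters; note also that this is why the statement only yields \emph{semi-integral} S-equivalence: the congruence is integral, but the enlargements are not.
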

\begin{proof}
Assume $n$ is a positive integer.

Set $\displaystyle V=\begin{pmatrix} p & q & \omega^t \\ r & s & \rho^t \\ \omega & \rho & U \end{pmatrix}$. Then 
$\displaystyle W=\begin{pmatrix} n^2p &q& n\omega^t \\ r&\frac{s}{n^2} &\frac{1}{n}\rho^t \\ n\omega & \frac{1}{n}\rho & U \end{pmatrix}$. 
Note that $r=q+1$. 
Set: $$P=\left(\begin{array}{c|c} \begin{array}{cccc} 0 & n & 0 & -1 \\ 0 & 0 & 1 & 0 \\ 1 & 0 & n & 0 \\ 0 & 1 & 0 & 0 \end{array} &
\quad 0 \quad \\ \hline & \\ 0 & I \\ &  \end{array}\right) , \quad \tilde{V}=\left(\begin{array}{cccc|c} 0 & -1 & 0 &0& \quad 0 \quad \\ 
0 & \frac{s}{n^2} & \frac{r}{n} & \frac{s}{n} & \frac{1}{n}\rho^t \\ 0 & \frac{r}{n} & p & q & \omega^t \\ 
0 & \frac{s}{n} & r & s & \rho^t \\ \hline &&&& \\ 0 & \frac{1}{n}\rho & \omega & \rho & U \\ &&&& \end{array} \right) ,$$
$$\textrm{and } \tilde{W}=\left(\begin{array}{cccc|c} 0 & 0 & 0 & 0 & \quad 0 \quad \\ 1 & p & np & \frac{r}{n} & \omega^t \\
0 & np & n^2p & q & n\omega^t \\ 0 & \frac{r}{n} & r & \frac{s}{n^2} & \frac{1}{n}\rho^t \\ \hline &&&& \\ 
0 & \omega & n\omega & \frac{1}{n}\rho & U \\ &&&& \end{array} \right) . $$
The matrix $P$ is integral and symplectic, and we have $\tilde{W}=P\tilde{V}P^t$.

Let $(b_i)_{1\leq i\leq 2g}$ (resp. $(b'_i)_{1\leq i\leq 2g}$) be a family of generators of $\Al(K)$ (resp. $\Al(K')$) 
associated with $V$ (resp. $W$). 
The congruence matrix $\Delta_n$ induces the $\tau$-class of the isomorphism $\xi: \Al(K)\to\Al(K')$ such that 
$\xi(b_i)=(\begin{pmatrix} b'_1 & \dots & b'_{2g}\end{pmatrix}\Delta_n)_i$. 
Let us check that the obtained sequence of an enlargement, an integral symplectic congruence, and a reduction, also induces $\xi$. 
The matrix $\tilde{V}$ defines a $\Qt$-module $\Al\cong\Al(K)$ with a generating family $(\tilde{b}_i)_{1\leq i\leq 2g+2}$ 
and relations $(\begin{pmatrix} \tilde{b}_1 & \dots & \tilde{b}_{2g+2}\end{pmatrix}(t\tilde{V}-\tilde{V}^t))_j$. 
The enlargement of $V$ into $\tilde{V}$ induces the $\tau$-class of the isomorphism $\zeta: \Al(K)\to\Al$ such that $\zeta(b_i)=\tilde{b}_{i+2}$. 
Similarly, define $\Al'$, $(\tilde{b}'_i)_{1\leq i\leq 2g+2}$ and $\zeta'$. The congruence matrix $P$ induces the $\tau$-class of the 
isomorphism $\vartheta: \Al\to\Al'$ such that $\vartheta(\tilde{b}_i)=(\begin{pmatrix} \tilde{b}'_1 & \dots & \tilde{b}'_{2g+2}\end{pmatrix}P)_i$. 
Set $\xi'=(\zeta')^{-1}\circ\vartheta\circ\zeta$. 
Let us check that $\xi'(b_i)=\xi(b_i)$ for all $i\in\{1,..,2g\}$. For $i\geq 3$, it is obvious. For $i=1$, it follows from 
the relation $\tilde{b}'_2=0$ given by the first column of the matrix $t\tilde{W}-\tilde{W}^t$. For $i=2$, we have 
$\xi'(b_2)=(\zeta')^{-1}(-\tilde{b}'_1)$. Since the second column of $t\tilde{W}-\tilde{W}^t$ gives:
$$\tilde{b}'_1=(t-1)\lbp np\tilde{b}'_3+\frac{r}{n}\tilde{b}'_4+\begin{pmatrix} \tilde{b}'_5 & \dots & \tilde{b}'_{2g+2}\end{pmatrix}\omega\rbp,$$
and since the first column of $tW-W^t$ gives:
$$b'_2=-(t-1)\lbp n^2 p b'_1+rb'_2+n\begin{pmatrix} b'_3 & \dots & b'_{2g}\end{pmatrix}\omega\rbp,$$
we have $\xi'(b_2)=\frac{1}{n}b_2'$.

Since $\Delta_{\frac{1}{n}}=\Delta_n^{-1}$, the case $\frac{1}{n}\in\mathbb{N}\setminus\{0\}$ follows.
\end{proof}

\begin{lemma} \label{lemmasymp}
 Any symplectic rational matrix $P$ can be written as a product of integral symplectic matrices and matrices $\Delta_n$
or $\Delta_\frac{1}{n}$ for positive integers $n$.
\end{lemma}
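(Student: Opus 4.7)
The plan is to establish a Smith-type decomposition for $Sp_{2g}(\Q)$: every rational symplectic matrix $P$ factors as $P=ADB$ with $A,B\in Sp_{2g}(\Z)$ and $D$ a diagonal symplectic matrix of the form $\mathrm{diag}(d_1,d_1^{-1},\dots,d_g,d_g^{-1})$ with positive rationals $d_i$. Granting this, the lemma follows quickly: writing each $d_i=p_i/q_i$ with $p_i,q_i\in\N^*$ gives, on the first symplectic block, the factorization $\mathrm{diag}(d_1,d_1^{-1})=\Delta_{p_1}\,\Delta_{1/q_1}$, and for the $i$-th symplectic block one conjugates by the integral symplectic permutation $\sigma_i\in Sp_{2g}(\Z)$ interchanging the first and $i$-th pairs $(e_{2k-1},e_{2k})$. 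Thus $D$, and hence $P$, lies in the subgroup of $Sp_{2g}(\Q)$ generated by $Sp_{2g}(\Z)$ and the matrices $\Delta_n$, $\Delta_{1/n}$ for $n\in\N^*$.

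The Smith decomposition itself I would prove by induction on $g$; the case $g=0$ is trivial. For the inductive step, given $P\in Sp_{2g}(\Q)$, write the first column as $Pe_1=\frac{1}{N}w$ with $N\in\N^*$ and $w\in\Z^{2g}$ primitive. By the classical transitivity of the $Sp_{2g}(\Z)$-action on primitive integer vectors, one finds $A_1\in Sp_{2g}(\Z)$ with $A_1w=e_1$, so the first column of $A_1P$ is $\frac{1}{N}e_1$. The symplectic identity $(A_1P)J(A_1P)^t=J$, applied to the row indexed by $e_2$, pins down the second row of $A_1P$ as $Ne_2^t$. Further right-multiplication by integral symplectic matrices and, if necessary, one auxiliary $\Delta_n$-factor then reduces $A_1P$ to the block-diagonal form $\mathrm{diag}(1/N,N)\oplus Q$ with $Q\in Sp_{2g-2}(\Q)$, after which the inductive hypothesis applied to $Q$ completes the decomposition.

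The main obstacle is the final clearing step. Unlike the case of $GL_{2g}$, integral symplectic column operations are significantly restricted: the elementary matrix $I+c\,e_1e_j^t$ for $j\geq 3$ is not itself symplectic, so individual off-diagonal entries must be cleared via symplectic transvections $T_v(c)=I+c\,v(Jv)^t$ or by paired elementary operations that together preserve $J$. Rational denominators also demand careful tracking; any cancellation coefficient that fails to be integral is absorbed into a single $\Delta_n$-factor that becomes part of the diagonal $D$. I expect the careful verification that integral symplectic operations suffice to produce the desired block-diagonal reduction, coupled with the management of the auxiliary $\Delta_n$-factors, to constitute the technical heart of the proof.
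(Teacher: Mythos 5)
Your target decomposition --- a Cartan/Smith factorization $P=ADB$ with $A,B\in Sp_{2g}(\Z)$ and $D=\mathrm{diag}(d_1,d_1^{-1},\dots,d_g,d_g^{-1})$, followed by the reduction of $D$ to products of $\Delta_n$, $\Delta_{1/n}$ and integral symplectic block permutations --- is a true statement, and your second stage (splitting each $d_i=p_i/q_i$ and conjugating by block swaps) is fine. The gap is that the decomposition itself is not proved: the step you yourself call ``the technical heart'', namely reducing $A_1P$ to $\mathrm{diag}(1/N,N)\oplus Q$ by integral symplectic column operations plus ``if necessary, one auxiliary $\Delta_n$-factor'', is only asserted, and it fails as stated. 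Already for $g=1$ the matrix $P=\left(\begin{smallmatrix}1&1/2\\0&1\end{smallmatrix}\right)$ is symplectic with primitive first column $e_1$ (so $N=1$, $A_1=I$) and second row $Ne_2^t$, yet no right multiplication by integral symplectic matrices can bring it to $\mathrm{diag}(1/N,N)=I_2$, since that would force $P$ itself to be integral. Thus the correct diagonal block is not determined by the denominator of the first column, the auxiliary $\Delta_n$ is doing essential and unexplained work, and the verification that a suitable $\Delta_n$ exists and that the subsequent clearing (by integral symplectic transvections, which as you note cannot touch a single off-diagonal entry in isolation) succeeds is precisely what is missing. As it stands the proposal is a plausible plan, not a proof.

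The paper circumvents this difficulty rather than solving it: it normalizes the first \emph{two} columns instead of one. After using $\Delta_d$, $\Delta_{1/\delta}$ and an integral symplectic matrix with prescribed primitive first column to make the first column of $P$ equal to $e_1$ exactly (not $\frac{1}{N}e_1$), it writes down an explicit symplectic matrix $Q$ sharing its first two columns with $P^{-1}$, so that $PQ$ has first two columns $e_1,e_2$; the non-integrality of $Q$ is absorbed by the identity $Q=\Delta_{1/n}(\Delta_n Q\Delta_{1/n})\Delta_n$ with the middle factor integral symplectic. The key point is that once the first two columns are $e_1,e_2$, the symplectic condition forces the rest of the first two rows to vanish automatically: if $P=\left(\begin{smallmatrix}I_2&R\\0&Q'\end{smallmatrix}\right)$ is symplectic then $R=0$. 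So no clearing of the first row is ever performed, and the induction on $g$ proceeds. To repair your argument you should either prove the symplectic elementary divisor theorem in full (e.g. prime by prime over $\Z_{(p)}$), or adopt this two-column normalization, which makes your problematic clearing step disappear.
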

\begin{proof} \ 
\paragraph{Step 1:} There is no loss in assuming that the first column of $P$ is $\begin{pmatrix} 1 \\ 0 \\ \vdots \\ 0 \end{pmatrix}$.

Denote by $d$ a common denominator for the terms of the first column of $P$. The matrix $P\Delta_d$ has integral coefficients
in its first column. Denote by $\delta$ their gcd. The terms of the first column of $P\Delta_d\Delta_{\frac{1}{\delta}}$
are coprime integers. There is an integral symplectic matrix $Q$ with the same first column. The matrix 
$Q^{-1}P\Delta_d\Delta_{\frac{1}{\delta}}$ has the required first column.

\paragraph{Step 2:} We can assume that the first two columns of $P$ are 
$\begin{pmatrix} 1 & 0 \\ 0 & 1 \\  \vdots & 0 \\ \vdots & \vdots \\ 0 & 0 \end{pmatrix}$.

The matrix $P^{-1}$ has the same first column as $P$. Since it is symplectic, its second column is 
$\begin{pmatrix} x_1 \\ \vdots \\ x_{2g} \end{pmatrix}$, with $x_2=1$. Set:
$$Q=\begin{pmatrix} 1 & x_1 & -x_4 & x_3 & \dots & -x_{2g} & x_{2g-1} \\ 0 & 1 & 0 & \dots & \dots & \dots & 0 \\
   0 & x_3 & 1 & & & & \\ \vdots & \vdots & & \ddots & & 0 & \\ \vdots & \vdots & & & \ddots & & \\ 
   \vdots & \vdots & & 0 & & \ddots & \\ 0 & x_{2g} & & & & & 1 \end{pmatrix}.$$
Since $Q$ has the same first two columns as $P^{-1}$, the matrix $PQ$ has the required first two columns.
Now, if $n$ is a common denominator for all the $x_i$, the matrix $\Delta_nQ\Delta_\frac{1}{n}$ has integral 
coefficients, and is symplectic.

\paragraph{Step 3:} Induction.

We have $P=\begin{pmatrix} I_2 & R \\ 0 & Q \end{pmatrix}$. Since $P$ is symplectic, $R=0$ and $Q$ is symplectic.
Thus we can conclude by induction on $g$.\end{proof}

    \section{Relating integral Seifert matrices} \label{secZ}

In order to prove Theorem \ref{thSeqZ}, we want to proceed as for proving Theorem \ref{thSeq}. Here, we have to avoid enlargements 
with non integral coefficients. Thus we shall be more careful in the way we decompose rational symplectic congruences. 
Following Trotter \cite{Trotter}, we introduce some formalism. Set $z=(1-t)^{-1}$. 

\begin{definition}
 A {\em scalar space} $\Al$ is a finitely generated torsion $\Q[t,t^{-1},z]$-module, endowed with a {\em scalar form} $[.,.]:\Al\times\Al\to\Q$, 
that is a $\Q$-bilinear non-degenerate anti-symmetric form which satisfies:
\begin{itemize}
 \item $[ta_1,ta_2]=[a_1,a_2]$,
 \item $[za_1,a_2]=-[a_1,tza_2]=[a_1,(1-z)a_2]$,
\end{itemize}
for all $a_1,a_2\in\Al$.
\end{definition}

Given a $\Q$SK-pair $(M,K)$, define a scalar space structure on the Alexander module $\Al(K)$ as follows. 
Multiplication by $(1-t)$ is an isomorphism of $\Al(K)$; define the action of $z$ as its inverse. 
Define a scalar form on $\Al(K)$ by $[a_1,a_2]=\chi(\phi_K(a_1,a_2))$ for all $a_1,a_2\in\Al$, where $\chi$ is the map 
defined before Lemma \ref{lemmaprev}. 

\begin{definition}
 Let $(\Al,[.,.])$ be a scalar space of $\Q$-dimension $2g$. A {\em lattice} in $\Al$ is a free $\Z$-submodule of rank $2g$. 
Such a lattice $\Gamma$ is {\em self-dual} if, for $a\in\Al$, $a$ is in $\Gamma$ if and only if $[a,x]\in\Z$ for all $x\in\Gamma$. 
A lattice $\Gamma$ in $\Al$ is {\em admissible} if it is self-dual and if it satisfies $z\Gamma\subset\Gamma$. 
\end{definition}

\begin{lemma} \label{lemmalattice}
 Let $(M,K,\Sigma,\fs,V)$ be a $\Z$SK-system such that $V$ is invertible over $\Q$. Let $\bs=(b_i)_{1\leq i\leq 2g}$ be a basis 
of $\Al(K)$ associated with $V$. Let $\Gamma$ be the lattice generated by the basis $\bs$ in $\Al(K)$. 
Then $\Gamma$ is admissible.
\end{lemma}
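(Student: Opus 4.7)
The plan is to check the two defining properties of admissibility separately, using the matrix description of the action of $t$ and the scalar form afforded by the invertibility of $V$.

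First I would set the stage. Because $V$ is invertible over $\Q$, Lemma \ref{lemmaQbase} applies: $\bs$ is a $\Q$-basis of $\Al(K)$ and the action of $t$ on the coordinate column vectors is given by the matrix $T=V^tV^{-1}$. Thus $\Gamma=\Oplus{i}\Z b_i$ really is a full-rank lattice in $\Al(K)$. Moreover, the Alexander polynomial $\Delta(t)=\det(tV-V^t)$ satisfies $\Delta(1)=\det(V-V^t)=\det(J)=1\neq 0$, so $(1-t)$ acts invertibly on $\Al(K)$ and $z=(1-t)^{-1}$ is a well-defined endomorphism.

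For the inclusion $z\Gamma\subset\Gamma$, I would simply compute the matrix of $z$ in the basis $\bs$. From $I-T=(V-V^t)V^{-1}=JV^{-1}$ one gets $(I-T)^{-1}=VJ^{-1}=-VJ$. Since the matrices $V$ and $J$ are both integral (this is a $\Z$SK-system), so is $-VJ$; hence each $zb_i$ is a $\Z$-linear combination of the $b_j$ and $\Gamma$ is $z$-stable.

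For self-duality, I would use Lemma \ref{lemmaS}, which gives $\chi(\phi_K(b_j,b_i))=J_{ij}$, i.e.\ $[b_i,b_j]=J_{ji}$. In particular, $[b_i,b_j]\in\Z$, so $\Gamma$ is contained in its dual. For the reverse inclusion, take $a=\sum c_ib_i\in\Al(K)$ with $c_i\in\Q$ and assume $[a,x]\in\Z$ for every $x\in\Gamma$. Evaluating on the basis vectors yields $(Jc)_j=\sum_i c_iJ_{ji}=[a,b_j]\in\Z$ for all $j$, so $Jc\in\Z^{2g}$; since $J^{-1}=-J$ has integer entries, $c=-J(Jc)\in\Z^{2g}$ and therefore $a\in\Gamma$. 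Combined with the previous paragraph, this shows that $\Gamma$ is admissible. The arguments are essentially routine once Lemmas \ref{lemmaQbase} and \ref{lemmaS} are in hand; the only mild subtlety is making sure the matrix conventions for the action of $t$ and for the scalar form match so that the two integrality checks both reduce to integrality of $V$ and of $J^{\pm 1}$.
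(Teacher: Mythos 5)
Your proof is correct and follows essentially the same route as the paper: the paper also deduces $z$-stability from the formula $Z=-VJ$ (via Corollary \ref{corV}) and self-duality from Lemma \ref{lemmaS}, merely stating the latter implication without the explicit dual-lattice computation you spell out. The conventions all check out ($[b_i,b_j]=J_{ji}$ gives the matrix $-J$ for the scalar form, and $(I-T)^{-1}=VJ^{-1}=-VJ$ is integral), so there is nothing to add.
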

\begin{proof}
By Lemma \ref{lemmaS}, the matrix of the scalar form on $\Al(K)$, with respect to the basis $\bs$, is $-J$. It follows that $\Gamma$ is self-dual. 
Let $Z$ be the matrix of the action of $z$ in the basis $\bs$. By Corollary \ref{corV}, $Z=-VJ$. Thus $Z$ has integral coefficients. 
\end{proof}

The next lemma implies that $\Al_\Z(K)=\Zt\Gamma\subset\Al(K)$, with the 
notation of Lemma \ref{lemmalattice}. Note that multiplication by $(1-t)$ is an isomorphism of $\Al_\Z(K)$, hence $\Al_\Z(K)=\Lambda\Gamma$, 
where $\Lambda=\Z[t,t^{-1},z]$. 

\begin{lemma}
 The integral Alexander module associated with a $\Z$SK-pair has no $\Z$-torsion.
\end{lemma}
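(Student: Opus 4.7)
The plan is to use the $\Zt$-presentation of $\Al_\Z(K)$ recalled in Section 1.3, namely
$$\Al_\Z(K) \;\cong\; \operatorname{coker}\bigl(A \colon \Zt^{2g} \to \Zt^{2g}\bigr), \qquad A = tV - V^t,$$
for an integral Seifert matrix $V$, and to deduce $\Z$-torsion-freeness by a one-line Tor computation.

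The crucial observation is that $A(1) = V - V^t = J$ has $\det J = 1$, since $J$ is block-diagonal with $\begin{pmatrix} 0 & -1 \\ 1 & 0 \end{pmatrix}$ blocks. Hence $\det A \in \Zt$ is a Laurent polynomial whose value at $t=1$ is $\pm 1$. This has two consequences I would record separately. First, $\det A \neq 0$ in the integral domain $\Zt$, so $A \colon \Zt^{2g} \to \Zt^{2g}$ is injective. Second, for every prime $p$, the reduction $\det A \bmod p$ is still nonzero in $\mathbb{F}_p[t,t^{-1}]$ (it takes the value $\pm 1$ at $t = 1$), so $A \bmod p \colon \mathbb{F}_p[t,t^{-1}]^{2g} \to \mathbb{F}_p[t,t^{-1}]^{2g}$ remains injective.

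From the injectivity of $A$ we obtain a short exact sequence
$$0 \to \Zt^{2g} \xrightarrow{A} \Zt^{2g} \to \Al_\Z(K) \to 0,$$
in which the first two terms are free abelian groups, so this is a flat resolution of $\Al_\Z(K)$ as a $\Z$-module. Tensoring over $\Z$ with $\mathbb{F}_p$ yields
$$0 \to \operatorname{Tor}^\Z_1(\Al_\Z(K), \mathbb{F}_p) \to \mathbb{F}_p[t,t^{-1}]^{2g} \xrightarrow{A \bmod p} \mathbb{F}_p[t,t^{-1}]^{2g} \to \Al_\Z(K) \otimes \mathbb{F}_p \to 0,$$
and injectivity of $A \bmod p$ forces $\operatorname{Tor}^\Z_1(\Al_\Z(K), \mathbb{F}_p) = 0$. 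Since $\operatorname{Tor}^\Z_1(M, \mathbb{F}_p)$ is the $p$-torsion subgroup of $M$, this rules out $p$-torsion for every prime $p$, giving the result.

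There is no real obstacle here: the whole argument rests on the unimodularity $\det J = \pm 1$, after which the conclusion is immediate from the long exact sequence for Tor. The only thing to double-check is that the presentation $\Al_\Z(K) = \operatorname{coker}(tV - V^t)$ over $\Zt$ — not just over $\Qt$ — is indeed available, which is exactly the integral statement recorded at the end of Section 1.3.
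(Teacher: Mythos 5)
Your proof is correct, and it rests on exactly the same key input as the paper's: the presentation of $\Al_\Z(K)$ over $\Zt$ by $W=tV-V^t$ together with the unimodularity $\det W|_{t=1}=\det J=1$. The execution differs, though. The paper argues directly at the level of the presentation: if $kA=WX$ with $k\in\Z\setminus\{0\}$, then $k\Cof(W)A=\Delta(t)X$, so $k$ divides every coefficient of $\Delta(t)X$; since $\Delta(1)=1$ the content of $\Delta$ is $1$, so by Gauss's lemma $k$ divides the content of $X$, whence $X=kY$ and $A=WY$ already lies in the relation submodule. You instead package the same unimodularity homologically: $\det W\neq 0$ in $\Zt$ and $\det W\not\equiv 0 \bmod p$ for every prime $p$ give injectivity of $W$ and of $W\bmod p$, and the resulting free $\Z$-resolution $0\to\Zt^{2g}\to\Zt^{2g}\to\Al_\Z(K)\to 0$ kills $\operatorname{Tor}^{\Z}_1(\Al_\Z(K),\mathbb{F}_p)$ for all $p$. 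Your route is slightly slicker and makes transparent exactly what is used (non-vanishing of $\det W$ modulo every prime, which here comes from $\Delta(1)=1$); the paper's route is more elementary, using only divisibility of Laurent polynomial coefficients and no homological algebra. Both are complete; just make sure to state, as you do, that the integral presentation is the one recorded at the end of Section 1.3, since that is the only input not re-proved in either argument.
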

\begin{proof}
 Let $(M,K,\Sigma,\fs,V)$ be a $\Z$SK-system. Let $\bs=(b_i)_{1\leq i\leq 2g}$ be a family of generators of $\Al_\Z(K)$ 
associated with $V$. 
We have: $$\Al_\Z(K)=\frac{\Oplus{1\leq i\leq 2g} \Zt b_i}{\Oplus{1\leq j\leq 2g} \Zt r_j},$$ 
where $r_j=\sum_{1\leq i\leq 2g} W_{ij}b_i$ and $W=tV-V^t$. Let $p: \Oplus{1\leq i\leq 2g} \Zt b_i \twoheadrightarrow \Al_\Z(K)$ be 
the natural projection. Represent the elements of $\Oplus{1\leq i\leq 2g} \Zt b_i$ by column vectors giving their coordinates in the basis $\bs$. 

Let $A\in \Oplus{1\leq i\leq 2g} \Zt b_i$. Assume $kA\in\ker(p)$ for a non trivial integer $k$. Then there is $X\in\Oplus{1\leq i\leq 2g} \Zt b_i$ 
such that $kA=WX$. Thus: $$k\Cof(W)A=\det(W)X=\Delta(t)X,$$ where $\Cof(W)$ is the cofactor matrix of $W$ and $\Delta(t)$ is the Alexander 
polynomial of $(M,K)$. Hence $k$ divides each coefficient of $\Delta(t)X$. Since $\Delta(1)=1$, it implies that $X=kY$ 
for some $Y\in\Oplus{1\leq i\leq 2g} \Zt b_i$. Thus $A=WY\in\ker(p)$.
\end{proof}

Let $\Gamma$ be a lattice in a scalar space $\Al$. A basis $\bs=(b_i)_{1\leq i\leq 2g}$ of $\Gamma$ is {\em symplectic} 
if the matrix of the scalar form with respect to $\bs$ is $-J$. 
\begin{lemma} \label{lemmaselfdual}
 Let $(\Al,[.,.])$ be a scalar space. Let $\Gamma$ be a lattice in $\Al$. Then $\Gamma$ is self-dual if and only if it has a symplectic basis.
\end{lemma}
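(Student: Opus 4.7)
The plan is to handle the two implications separately. The reverse direction reduces to the unimodularity of $-J$; the forward direction proceeds by induction on $g$, the genuine content being a symplectic splitting extracting a hyperbolic pair inside $\Gamma$.

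For ($\Leftarrow$), suppose $(b_i)_{1\leq i\leq 2g}$ is a symplectic basis of $\Gamma$. Since $\Gamma$ has $\Z$-rank $2g$ and $\Al$ has $\Q$-dimension $2g$, the $b_i$ also form a $\Q$-basis of $\Al$. Writing $a=\sum a_ib_i$ with $a_i\in\Q$, the relation $[a,b_j]=\sum_i (-J)_{ij}\,a_i$ together with the unimodularity of $-J$ shows that $[a,x]\in\Z$ for all $x\in\Gamma$ if and only if every $a_i$ is an integer, i.e.\ if and only if $a\in\Gamma$.

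For ($\Rightarrow$), the case $g=0$ is trivial. The inductive step hinges on producing a pair $(b_1,b_2)\in\Gamma^2$ with $[b_1,b_2]=1$; this extraction is the main obstacle, since it is the one place where self-duality is genuinely used. I would reformulate self-duality as the statement that $\gamma\mapsto[\gamma,\cdot]|_\Gamma$ is a $\Z$-isomorphism $\Gamma\to\mathrm{Hom}_\Z(\Gamma,\Z)$ (equivalently, the Gram matrix of any $\Z$-basis of $\Gamma$ is unimodular). Picking any primitive $b_1\in\Gamma$, extending it to a $\Z$-basis, and using the functional dual to $b_1$ then yields some $b_2\in\Gamma$ with $[b_1,b_2]=1$ (after a possible sign change).

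Given such a pair, set $\Al'=\{a\in\Al:[a,b_1]=[a,b_2]=0\}$ and $\Gamma'=\Gamma\cap\Al'$. The decomposition $x=[x,b_2]\,b_1-[x,b_1]\,b_2+y$ with $y\in\Gamma'$, whose coefficients are integers by self-duality of $\Gamma$, together with an easy independence check obtained by pairing a relation with $b_1$ and $b_2$, yields the splitting $\Gamma=\Z b_1\oplus\Z b_2\oplus\Gamma'$. The plane spanned by $b_1,b_2$ being non-degenerate, $\Al'$ is a symplectic $\Q$-subspace of dimension $2g-2$ and $\Gamma'$ is a lattice in it. Self-duality of $\Gamma'$ in $\Al'$ is then automatic: any $a\in\Al'$ pairing integrally with every element of $\Gamma'$ also pairs integrally with $b_1$ and $b_2$ (both pairings being zero), hence with all of $\Gamma$, so $a\in\Gamma\cap\Al'=\Gamma'$. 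The induction hypothesis then produces a symplectic basis of $\Gamma'$ which, concatenated with $(b_1,b_2)$, is a symplectic basis of $\Gamma$.
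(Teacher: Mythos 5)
Your proof is correct and follows essentially the same route as the paper: extract a hyperbolic pair $(b_1,b_2)$ with $[b_1,b_2]=1$ using self-duality, split $\Gamma$ orthogonally as $\Z b_1\oplus\Z b_2\oplus\Gamma'$, and induct on $g$. The only cosmetic difference is that you obtain $b_2$ from the dual-basis (unimodular Gram matrix) formulation of self-duality, whereas the paper gets it by a Bezout argument on the coprime values $[b_1,b_i]$; these are equivalent.
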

\begin{proof}
 Assume $\Gamma$ is self-dual. Let $\bs=(b_i)_{1\leq i\leq 2g}$ be any basis of $\Gamma$. 
Set $s_i=[b_1,b_i]$. The self-duality condition implies that the non trivial $s_i$ are coprime. Hence there are integers $u_i$ such that 
$\sum_{i=1}^{2g} u_i s_i=1$. Set $b_2'=\sum_{i=1}^{2g} u_i b_i$, so that $[b_1,b_2']=1$. 
Let $\mathcal{B}$ be the orthogonal in $\Al$ of $\Q b_1\oplus\Q b_2'$ with respect to the scalar form. For $x\in\Gamma$, 
$y=x-[x,b_2']b_1-[b_1,x]b_2'\in\Gamma$ is orthogonal to $b_1$ and $b_2'$. It follows that $\Gamma$ is the direct sum, 
orthogonal with respect to the scalar form, of $\Z b_1\oplus\Z b_2'$ and $\mathcal{B}\cap\Gamma$. 
Conclude by induction on $g$ that $\Gamma$ has a symplectic basis. The reverse implication is easy. 
\end{proof}

\begin{lemma}
 Let $(\Al,[.,.])$ be a scalar space of $\Q$-dimension $2g$. Let $\Gamma$ be an admissible lattice in $\Al$. Let $\bs$ be a symplectic basis 
of $\Gamma$. Let $Z$ be the matrix of the action of $z$ in the basis $\bs$. Set $V=ZJ$. 
Then $V$ is an integral Seifert matrix. 
\end{lemma}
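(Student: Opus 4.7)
The statement requires two things: that $V$ has integer entries, and that $V - V^t = J$. Integrality is essentially free from the hypotheses. Indeed, since $\Gamma$ is admissible, $z\Gamma \subset \Gamma$, and $\bs$ is a $\Z$-basis of $\Gamma$, so the matrix $Z$ of multiplication by $z$ in the basis $\bs$ has integer entries. Hence $V = ZJ$ is integral. All of the work is in the identity $V - V^t = J$.

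Since $\bs$ is symplectic, the matrix of the scalar form $[.,.]$ in $\bs$ is $-J$. The plan is to translate the second axiom $[za_1, a_2] = [a_1, (1-z)a_2]$ into a matrix identity relating $Z$ and $Z^t$. Writing vectors as columns with respect to $\bs$, one side reads $(Za_1)^t(-J)a_2 = -a_1^t Z^t J a_2$ and the other side reads $-a_1^t J(I - Z)a_2$. Since this holds for all $a_1, a_2$, one obtains
\[
Z^t J = J - JZ.
\]
Multiplying on the right by $J^{-1} = -J$ (using $J^2 = -I$) rewrites this cleanly as $Z^t = I + JZJ$.

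With this identity in hand the computation of $V - V^t$ is short: using $J^t = -J$,
\[
V - V^t = ZJ - (ZJ)^t = ZJ + JZ^t = ZJ + J(I + JZJ) = ZJ + J + J^2 ZJ = ZJ + J - ZJ = J,
\]
as desired. So the proof reduces to extracting the single matrix relation $Z^t = I + JZJ$ from the second axiom of a scalar form, and the verification is then purely formal.

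The only mild obstacle is bookkeeping: one must be careful about which side the scalar form acts on and about the sign conventions $J^t = -J$ and $J^2 = -I$. The first axiom $[ta_1, ta_2] = [a_1, a_2]$ and the anti-symmetry are not needed for this statement; they would only be required if one wanted to recover additional structural information about $V$ (e.g., that $t$ acts as $V^t V^{-1}$ as in Lemma \ref{lemmaQbase}). For the present claim, the admissibility condition together with the single compatibility between $z$ and $[.,.]$ is exactly what delivers both integrality and the Seifert relation.
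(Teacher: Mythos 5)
Your proof is correct and takes essentially the same route as the paper: the paper's proof is the one\hyphenation{line} observation that the scalar-form axiom gives $Z^tJ=J(I-Z)$ in a symplectic basis, whence $V-V^t=J$, with integrality of $Z$ (hence of $V=ZJ$) coming from $z\Gamma\subset\Gamma$. You have simply written out the matrix translation and the algebra (via $Z^t=I+JZJ$) in full detail; no discrepancy.
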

\begin{proof}
 By definition of a scalar form, we have $Z^tJ=J(I-Z)$. It follows that $V-V^t=J$. 
Integrality follows from the admissibility condition.
\end{proof}
The matrix $V$ defined in the above lemma is the {\em Seifert matrix associated with $\Gamma$ and $\bs$}. 

\begin{definition}
 Let $(\Al,[.,.])$ be a scalar space of $\Q$-dimension $2g$. For $n\in\N\setminus\{0\}$, two lattices $\Gamma$ and $\Gamma'$ 
in $\Al$ are {\em $n$-adjacent} if $\displaystyle\frac{\Gamma}{\Gamma\cap\Gamma'}\cong\frac{\Z}{n\Z}$ 
and $\displaystyle\frac{\Gamma'}{\Gamma\cap\Gamma'}\cong\frac{\Z}{n\Z}$. 
Two lattices $\Gamma$ and $\Gamma'$ are {\em adjacent} if they are $n$-adjacent for some $n\in\N\setminus\{0\}$. 
\end{definition}

The following proposition is the object of Section 3 in \cite{Trotter}, althought it is not stated like this. 
\begin{proposition}[Trotter] \label{propTrotter}
 Let $(\Al,[.,.])$ be a scalar space. Let $\Gamma$ and $\Gamma'$ be admissible lattices in $\Al$ such that 
$\Lambda\Gamma=\Lambda\Gamma'$. Then there is a sequence of admissible lattices $\Gamma_0=\Gamma, \Gamma_1, \dots, \Gamma_k=\Gamma'$, 
such that, for $1\leq i\leq k$, $\Gamma_{i-1}$ and $\Gamma_i$ are adjacent, and $z\Gamma_{i-1}\subset\Gamma_i$ 
or $z\Gamma_i\subset\Gamma_{i-1}$.
\end{proposition}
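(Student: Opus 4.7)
The plan is to follow Trotter's argument from Section 3 of \cite{Trotter}, translated into our scalar space formalism, and to build the chain by an inductive interpolation between $\Gamma$ and $\Gamma'$.

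The starting observation is that because $\Gamma$ and $\Gamma'$ are $z$-invariant with $\Lambda\Gamma=\Lambda\Gamma'$, they span the same $\Zt$-module, so $\Gamma+\Gamma'$ is again a lattice and the quotient $(\Gamma+\Gamma')/(\Gamma\cap\Gamma')$ is a finite abelian group. I would induct on the order of this quotient: when it is trivial, $\Gamma=\Gamma'$ and there is nothing to prove.

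The main step is, when $\Gamma\neq\Gamma'$, to produce an admissible lattice $\Gamma_1\neq\Gamma$ with three properties: $\Gamma$ and $\Gamma_1$ are adjacent, one of $z\Gamma\subset\Gamma_1$ or $z\Gamma_1\subset\Gamma$ holds, and the order of $(\Gamma_1+\Gamma')/(\Gamma_1\cap\Gamma')$ is strictly smaller than that of $(\Gamma+\Gamma')/(\Gamma\cap\Gamma')$; the inductive hypothesis, applied to the pair $(\Gamma_1,\Gamma')$, then completes the chain. To find $\Gamma_1$, I would pick a prime $p$ dividing the order of the quotient, pass to its $p$-primary component, and use the $z$-action on this finite abelian group (well-defined because both lattices are $z$-invariant) to isolate a one-step modification: roughly, either enlarge $\Gamma$ by absorbing a cyclic factor coming from $\Gamma'$, or shrink $\Gamma$ along the dual cyclic factor, the choice being dictated by which of $z\Gamma \subset \Gamma_1$ or $z\Gamma_1\subset\Gamma$ one wishes to achieve.

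The main obstacle is preserving self-duality. A generic sublattice of index $p$ in $\Gamma$ will not be self-dual, so $\Gamma_1$ must be selected with care. The key tool is the scalar form compatibility $[za_1,a_2]=[a_1,(1-z)a_2]$, which forces the $\Z$-dual of a $z$-invariant sublattice of $\Gamma$ to be itself $z$-invariant in a controlled way, and consequently allows one to check that a ``dually symmetric'' modification of $\Gamma$ remains self-dual. Combining this observation with the $p$-primary reduction produces a candidate $\Gamma_1$ satisfying the three required properties. Verifying self-duality, $z$-invariance, the adjacency relation, and the $z$-containment condition in detail is the technical heart of Trotter's argument, and this is where I expect to spend the bulk of the work.
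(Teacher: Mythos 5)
The paper does not actually prove this proposition: it is attributed outright to Section~3 of \cite{Trotter} (``the following proposition is the object of Section 3 in \cite{Trotter}, although it is not stated like this''), so there is no in-paper argument to compare yours against. Judged as a self-contained proof, your proposal has a genuine gap: everything is reduced to the existence of the intermediate lattice $\Gamma_1$, and that existence is exactly the content of the proposition. You state the three properties $\Gamma_1$ must have (adjacency to $\Gamma$, one of the containments $z\Gamma\subset\Gamma_1$ or $z\Gamma_1\subset\Gamma$, and strict decrease of $\bigl|(\Gamma_1+\Gamma')/(\Gamma_1\cap\Gamma')\bigr|$), but the construction is only gestured at (``absorb a cyclic factor'', ``dually symmetric modification''), and it is precisely here that all the difficulty lives. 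Note in particular that adjacency is a strong constraint: it requires \emph{both} quotients $\Gamma/(\Gamma\cap\Gamma_1)$ and $\Gamma_1/(\Gamma\cap\Gamma_1)$ to be cyclic of the same order $n$, which in the self-dual setting forces $\Gamma_1$ to be of the form $\Z nb_1\oplus\Z\frac{1}{n}b_2\oplus\Z b_3\oplus\dots$ for a suitable symplectic basis $(b_i)$ of $\Gamma$ (compare Lemma~\ref{lemmaadjsymp}); a generic index-$p$ enlargement or reduction is neither adjacent nor self-dual, and you never exhibit a choice of $b_1,b_2$ for which the $z$-containment condition can simultaneously be arranged.

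Two further points your induction silently needs and does not address. First, the inductive hypothesis applies to the pair $(\Gamma_1,\Gamma')$ only if $\Lambda\Gamma_1=\Lambda\Gamma'$; this is not automatic for a lattice merely commensurable with $\Gamma$ and $z$-stable (e.g.\ $2\Gamma$ fails it), so it must be verified for your $\Gamma_1$. Second, your opening justification is slightly off: the finiteness of $(\Gamma+\Gamma')/(\Gamma\cap\Gamma')$ has nothing to do with $\Lambda\Gamma=\Lambda\Gamma'$ or $z$-invariance --- it holds for any two full-rank $\Z$-lattices in the finite-dimensional $\Q$-vector space $\Al$. These are fixable, but as written the proposal is a plan for reading Trotter's Section~3 rather than a proof; the ``technical heart'' you defer is the whole theorem.
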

This result implies that two integral Seifert matrices, invertible over $\Q$, which define the same integral Alexander module, 
can be related by integral congruences, and congruences with congruence matrices $\Delta_n$ (see Lemma \ref{lemmadelta} for the definition 
of $\Delta_n$). The proposition says more, with the last condition on the lattices, and we will use this to prove that 
these $\Delta_n$-congruences can be realized by integral S-equivalences. We first check that we can always use symplectic bases 
of the lattices.

An element $a$ of a lattice $\Gamma$ is {\em primitive} if the equality $a=kb$ with $k\in\Z$ and $b\in\Gamma$ implies $k=\pm 1$. It is easy to 
see that it is necessary and sufficient for $a$ to be primitive that the non trivial coefficients of $a$ in any basis of $\Gamma$ are coprime. 
It is also easy to see that, if $\Gamma$ is a self-dual lattice in a scalar space $(\Al,[.,.])$, then $a\in\Gamma$ is primitive if and only if 
there is $x\in\Gamma$ such that $[a,x]=1$. 
\begin{lemma} \label{lemmaadjsymp}
 Let $(\Al,[.,.])$ be a scalar space. Let $\Gamma$ and $\Gamma'$ be $n$-adjacent lattices in $\Al$. Assume $\Gamma$ and $\Gamma'$ 
are self-dual. Then there is a symplectic basis $(b_i)_{1\leq i\leq 2g}$ of $\Gamma$ such that $(nb_1,\frac{1}{n}b_2,b_3,\dots,b_{2g})$ 
is a symplectic basis of $\Gamma'$.
\end{lemma}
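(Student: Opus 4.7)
The plan is to peel off a symplectic pair $(b_1,b_2)$ of $\Gamma$ that accounts for the entire difference between $\Gamma$ and $\Gamma'$, and show that the orthogonal complement in the two lattices coincides; the result then follows by applying Lemma \ref{lemmaselfdual} to this common sublattice.

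Set $\Gamma_0=\Gamma\cap\Gamma'$, so $\Gamma/\Gamma_0\cong\Gamma'/\Gamma_0\cong\Z/n\Z$. Self-duality of $\Gamma$ and $\Gamma'$ gives $\Gamma,\Gamma'\subset\Gamma_0^\vee$; a discriminant computation (each self-dual lattice has unimodular scalar form, hence $\Gamma_0$ has discriminant $n^2$) forces $[\Gamma_0^\vee:\Gamma_0]=n^2$, and the isomorphism theorem gives $[\Gamma+\Gamma':\Gamma_0]=n^2$ as well, so $\Gamma+\Gamma'=\Gamma_0^\vee$. The scalar form thus induces a pairing $(\Gamma/\Gamma_0)\times(\Gamma'/\Gamma_0)\to\tfrac{1}{n}\Z/\Z$, which is non-degenerate: if $a\in\Gamma$ pairs integrally with all of $\Gamma'$, then $a\in(\Gamma')^\vee=\Gamma'$, so $a\in\Gamma_0$.

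Next, choose $b_1\in\Gamma$ primitive whose class generates $\Gamma/\Gamma_0$; primitivity and self-duality give $y_0\in\Gamma$ with $[b_1,y_0]=1$, and after subtracting an integer multiple of $b_1$ one may take $y_0\in\Gamma_0$, so $[b_1,\Gamma_0]=\Z$. Non-degeneracy of the induced pairing furnishes $c_2'\in\Gamma'$ generating $\Gamma'/\Gamma_0$ with $[b_1,c_2']\equiv\tfrac{1}{n}\pmod\Z$; using $[b_1,\Gamma_0]=\Z$ we correct $c_2'$ by an element of $\Gamma_0$ to obtain $c_2\in\Gamma'$ with $[b_1,c_2]=\tfrac{1}{n}$ on the nose. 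Set $b_2:=nc_2\in\Gamma_0\subset\Gamma$, so $[b_1,b_2]=1$; note $b_2$ is automatically primitive in $\Gamma$ (any divisor of $b_2$ in $\Gamma$ would pair with $b_1$ to a reciprocal integer).

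Let $\Al_1=\{x\in\Al:[x,b_1]=[x,b_2]=0\}$ and $\Gamma_1=\Gamma\cap\Al_1$, $\Gamma_1'=\Gamma'\cap\Al_1$. The identity $[b_1,b_2]=1$ gives the orthogonal decompositions $\Gamma=\Z b_1\oplus\Z b_2\oplus\Gamma_1$ and $\Gamma'=\Z(nb_1)\oplus\Z c_2\oplus\Gamma_1'$, and $\Gamma_1$, $\Gamma_1'$ are self-dual in $\Al_1$. The crucial point is $\Gamma_1=\Gamma_1'$: for $y\in\Gamma_1$, writing $\Gamma'=\Gamma_0+\Z c_2$ gives
\[
[y,\Gamma']=[y,\Gamma_0]+\Z[y,c_2],
\]
and $[y,c_2]=\tfrac{1}{n}[y,b_2]=0$ together with $[y,\Gamma_0]\subset\Z$ shows $y\in(\Gamma')^\vee=\Gamma'$; the reverse inclusion is symmetric. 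A symplectic basis $(b_3,\dots,b_{2g})$ of $\Gamma_1=\Gamma_1'$ is then produced by Lemma \ref{lemmaselfdual} (its proof goes through for a self-dual lattice in any non-degenerate symplectic $\Q$-space, the module structure being irrelevant). Then $(b_1,\dots,b_{2g})$ is symplectic in $\Gamma$ and $(nb_1,\tfrac{1}{n}b_2,b_3,\dots,b_{2g})=(nb_1,c_2,b_3,\dots,b_{2g})$ is symplectic in $\Gamma'$.

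The main obstacle is the equality $\Gamma_1=\Gamma_1'$; it is what forces the careful exact choice $[b_1,c_2]=\tfrac{1}{n}$ (rather than merely modulo $\Z$) and the primitivity of $b_1$, both of which are needed to exploit self-duality in the computation $[y,\Gamma']\subset\Z$ above.
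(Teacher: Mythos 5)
Your proof is correct and follows essentially the same route as the paper: both pick a primitive $b_1$ generating $\Gamma/(\Gamma\cap\Gamma')$, produce an element $c_2=b_2'\in\Gamma'$ with $[nb_1,c_2]=1$ (the paper via primitivity of $nb_1$ in $\Gamma'$ and self-duality, you via the induced $\frac{1}{n}\Z/\Z$-valued pairing plus a correction by $\Gamma_0$), split off the hyperbolic pair orthogonally, and identify the two orthogonal complements before invoking Lemma \ref{lemmaselfdual}. Your explicit verification that $\Gamma_1=\Gamma_1'$ is a welcome expansion of a step the paper leaves as a ``check''.
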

\begin{proof}
 Let $g$ be an element of $\Gamma$ which generates $\displaystyle\frac{\Gamma}{\Gamma\cap\Gamma'}\cong\frac{\Z}{n\Z}$. 
Let $b_1$ be a generator of $(\Q g)\cap\Gamma\cong\Z$. Note that $b_1$ also generates $\displaystyle\frac{\Gamma}{\Gamma\cap\Gamma'}$. 
Let us prove that $nb_1$ is a primitive element of $\Gamma'$. Assume $nb_1=k\gamma$ for some $k\in\Z$ and $\gamma\in\Gamma'$. 
For any proper divisor $n'$ of $n$, $n'b_1$ is not in $\Gamma'$. Hence $n$ and $k$ are coprime. Since $n\gamma\in\Gamma$ and $k\gamma\in\Gamma$, 
it implies that $\gamma\in\Gamma$. Thus $\gamma\in\Z(nb_1)$, and $k=\pm1$. Hence $nb_1$ is primitive in $\Gamma'$, and there is $b_2'\in\Gamma'$ 
such that $[nb_1,b_2']=1$. Set $b_2=nb_2'$. 
Let $\mathcal{B}$ be the orthogonal of $\Q b_1\oplus\Q b_2$ in $\Al$ with respect to the scalar form. 
Check that $\Gamma=(\Z b_1\oplus\Z b_2)\oplus^\perp(\mathcal{B}\cap\Gamma)$ and 
$\Gamma'=(\Z nb_1\oplus\Z \frac{1}{n}b_2)\oplus^\perp(\mathcal{B}\cap\Gamma')$. Thus $\mathcal{B}\cap\Gamma=\mathcal{B}\cap\Gamma'$ 
is a self-dual lattice in $\mathcal{B}$. Let $(b_3,\dots,b_{2g})$ be a symplectic basis of this lattice. Then the basis 
$(b_i)_{1\leq i\leq 2g}$ of $\Gamma$ satisfies the required conditions.
\end{proof}

\begin{lemma} \label{lemmastep}
 Let $(\Al,[.,.])$ be a scalar space. Let $\Gamma$ and $\Gamma'$ be $n$-adjacent admissible lattices in $\Al$. Let $\bs=(b_i)_{1\leq i\leq 2g}$ 
be a symplectic basis of $\Gamma$ such that $\bs'=(\frac{1}{n}b_1,nb_2,b_3,\dots,b_{2g})$ is a symplectic basis of $\Gamma'$. 
Let $V$ and $V'$ be the Seifert matrices associated with $\bs$ and $\bs'$ respectively. Then $V'=\Delta_n V\Delta_n$. 
If $z\Gamma\subset\Gamma'$, then $V'$ can be obtained from $V$ by an integral S-equivalence which induces the same $\tau$-class 
of isomorphisms as the congruence $V'=\Delta_n V\Delta_n$.
\end{lemma}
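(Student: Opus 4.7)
The plan is in two parts: first I verify the matrix identity $V'=\Delta_nV\Delta_n$ by a change-of-basis computation, then under the hypothesis $z\Gamma\subset\Gamma'$ I unpack enough divisibility from the entries of $V$ to make the construction in the proof of Lemma \ref{lemmadelta} integral. For the identity, I use the formula $V=ZJ$, where $Z$ is the matrix of multiplication by $z$ in the basis $\bs$; this formula is purely formal in the scalar-space setting and arises exactly as in Corollary \ref{corV}. The change-of-coordinates matrix from $\bs$- to $\bs'$-coordinates is $\Delta_n$, so the action of $z$ has matrix $Z'=\Delta_nZ\Delta_n^{-1}$ in the basis $\bs'$. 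A direct check on the upper-left $2\times 2$ block shows that $\Delta_nJ\Delta_n=J$, whence $V'=Z'J=\Delta_nZ\Delta_n^{-1}J=\Delta_nZJ\Delta_n=\Delta_nV\Delta_n$.

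Next I translate the hypotheses into divisibility conditions on the entries of $V$. Writing $V=\bigl(\begin{smallmatrix} p & q & \omega^t \\ r & s & \rho^t \\ \omega & \rho & U \end{smallmatrix}\bigr)$ as in Lemma \ref{lemmadelta}, the admissibility of $\Gamma'$ forces $V'=\Delta_nV\Delta_n$ to be integer-valued, which already yields $n^2\mid s$ and $n\mid\rho$ componentwise. On the other hand, an element of $\Gamma$ with $\bs$-coordinates $(x_i)$ lies in $\Gamma'$ if and only if $n\mid x_2$, so applying this to $zb_j$ for each $j$ turns the hypothesis $z\Gamma\subset\Gamma'$ into the condition that the second row of $Z$ is divisible by $n$; since $V=ZJ$ permutes the columns of $Z$ up to sign, the second row of $V$ is then divisible by $n$, and in particular $n\mid r$.

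With these conditions in hand, the fractional entries $\frac{s}{n^2}$, $\frac{r}{n}$, $\frac{s}{n}$, $\frac{1}{n}\rho$ of the enlargement $\tilde V$ of $V$, and the entries $\frac{r}{n}$, $\frac{1}{n}\rho$, $\frac{s}{n^2}$ of the enlargement $\tilde W$ of $W=V'$, appearing in the proof of Lemma \ref{lemmadelta} are all integers. Hence $\tilde V$ is an integral column enlargement of $V$, $\tilde W$ is an integral row enlargement of $V'$, and the congruence $\tilde W=P\tilde V P^t$ furnished by Lemma \ref{lemmadelta} is already an integral symplectic congruence. Composing enlargement, integral symplectic congruence, and reduction yields the desired integral S-equivalence from $V$ to $V'$, and the $\tau$-class assertion is inherited directly from the corresponding clause of Lemma \ref{lemmadelta}. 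The main obstacle is precisely the correct translation of the lattice condition $z\Gamma\subset\Gamma'$ into divisibility of the second row of $V$; once this is in place the rest is direct bookkeeping against Lemma \ref{lemmadelta}.
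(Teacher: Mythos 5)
Your proof is correct and follows essentially the same route as the paper: the congruence $V'=\Delta_nV\Delta_n$ via $V=ZJ$, $Z'\Delta_n=\Delta_nZ$ and the commutation of $\Delta_n$ with $J$, and then the translation of $z\Gamma\subset\Gamma'$ into divisibility of the second row of $Z$ (hence of $V=ZJ$) by $n$, which is exactly the condition needed to make the construction of Lemma \ref{lemmadelta} integral. Your version is in fact slightly more explicit than the paper's in listing which entries of $\tilde V$ and $\tilde W$ must be checked for integrality, but the argument is the same.
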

\begin{proof}
Let $Z$ (resp. $Z'$) be the matrix of the action of $z$ in the basis $\bs$ (resp. $\bs'$). Note that $\Delta_nZ=Z'\Delta_n$. 
By Corollary \ref{corV}, $V=ZJ$ and $V'=Z'J$. It follows that $V'=\Delta_n V\Delta_n$. 

 To prove the second statement, it suffices to prove that we can proceed as in Lemma \ref{lemmadelta}, {\em i.e.} that the coefficient 
$V_{21}$ of $V$ is divisible by $n$. Since $z\Gamma\subset(\Gamma\cap\Gamma')$, each $zb_i$ is a linear combination of 
$b_1,nb_2,b_3,\dots,b_{2g}$. It follows that $Z$ has its second row divisible by $n$. 
Hence $V=ZJ$ also has its second row divisible by $n$.
\end{proof}

\begin{proposition} \label{propSeqZ}
 Let $(M,K,\Sigma,\fs,V)$ and $(M',K',\Sigma',\fs',V')$ be two $\Z$SK-systems. Let $\xi : \Al_\Z(K)\to\Al_\Z(K')$ 
be an isomorphism which preserves the Blanchfield form. Assume $V$ and $V'$ are invertible over $\Q$. 
Then $V$ and $V'$ are related by an integral S-equivalence which canonically induces the $\tau$-class of $\xi$.
\end{proposition}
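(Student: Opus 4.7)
My plan is to mimic the proof of Proposition \ref{propSeq} while staying integral, replacing the decomposition of rational symplectic matrices (Lemma \ref{lemmasymp}) by Trotter's lattice machinery developed earlier in this section. The first step is to transport the data to a single scalar space. Since $V$ and $V'$ are invertible over $\Q$, the families $\bs$ and $\bs'$ are $\Q$-bases of $\Al(K)$ and $\Al(K')$; by Lemma \ref{lemmalattice} they span admissible lattices $\Gamma\subset\Al(K)$ and $\Gamma'\subset\Al(K')$, and we have $\Al_\Z(K)=\Lambda\Gamma$ and $\Al_\Z(K')=\Lambda\Gamma'$. I extend $\xi$ by $\Q$-linearity to a $\Q(t)$-isomorphism $\Al(K)\to\Al(K')$; because the scalar form is defined from the Blanchfield form via $\chi$, this extension preserves the scalar form, and $\Gamma_0:=\xi(\Gamma)$ is an admissible lattice in $\Al(K')$ with $\Lambda\Gamma_0=\Lambda\Gamma'$. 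This reduces the problem to an integral S-equivalence, inside $\Al(K')$, between the Seifert matrices associated with $(\Gamma_0,\xi(\bs))$ and $(\Gamma',\bs')$.

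Next, I apply Proposition \ref{propTrotter} to the pair $(\Gamma_0,\Gamma')$, obtaining a chain $\Gamma_0,\Gamma_1,\dots,\Gamma_k=\Gamma'$ of admissible lattices where, at each step, $\Gamma_{i-1}$ and $\Gamma_i$ are $n_i$-adjacent and either $z\Gamma_{i-1}\subset\Gamma_i$ or $z\Gamma_i\subset\Gamma_{i-1}$. Starting from $\xi(\bs)$, I build inductively, via Lemma \ref{lemmaadjsymp}, symplectic bases of the $\Gamma_i$ adapted to the next adjacency: at stage $i$ one first performs an integral symplectic congruence on the current Seifert matrix (this is the change of symplectic basis inside $\Gamma_i$), then invokes Lemma \ref{lemmastep} in the appropriate direction to realize the $\Delta_{n_i}$-congruence by an integral S-equivalence. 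When $z\Gamma_{i-1}\subset\Gamma_i$ the lemma applies directly; when $z\Gamma_i\subset\Gamma_{i-1}$ I apply it with the roles reversed, which is legitimate since integral S-equivalence is an equivalence relation (it includes both enlargements and reductions). A final integral symplectic congruence then converts the last symplectic basis of $\Gamma'$ into $\bs'$.

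Concatenating these elementary moves produces an integral S-equivalence from $V$ to $V'$. The last step is to check, using Lemma \ref{lemmacons2Z}, that the induced $\tau$-class of isomorphisms $\Al_\Z(K)\to\Al_\Z(K')$ is that of $\xi$. By construction, every intermediate move is either a re-choice of symplectic basis within a fixed lattice (inducing the identity on the corresponding module) or an integral realization of the canonical identification $\Gamma_{i-1}\leftrightarrow\Gamma_i$ inside $\Al(K')$, so the composed $\tau$-class is precisely the one carried by the inclusions $\Al_\Z(K)=\Lambda\Gamma\xrightarrow{\xi}\Lambda\Gamma_0=\Lambda\Gamma'=\Al_\Z(K')$. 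The step I expect to be the main obstacle is this last compatibility verification: one must track the induced $\tau$-class through Lemma \ref{lemmastep} carefully in the reversed case $z\Gamma_i\subset\Gamma_{i-1}$, confirming that reading the lemma backwards still yields the identity on the underlying integral Alexander module rather than some nontrivial $\Qt$-automorphism, and that the chain of identifications glues coherently across all $k$ stages.
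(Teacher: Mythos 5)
Your proposal is correct and follows essentially the same route as the paper: transport $\bs$ via $\xi$ to an admissible lattice in $\Al(K')$, invoke Proposition \ref{propTrotter} to get a chain of adjacent admissible lattices, and realize each step by an integral symplectic congruence (change of symplectic basis within a lattice, via Lemma \ref{lemmaadjsymp}) followed by a $\Delta_{n_j}$-congruence realized integrally by Lemma \ref{lemmastep}, composing the induced $\tau$-classes at the end. The reversed case $z\Gamma_i\subset\Gamma_{i-1}$ that you flag is indeed handled exactly as you suggest, by applying Lemma \ref{lemmastep} with the roles of the two lattices exchanged, which the paper leaves implicit.
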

\begin{proof}
 Let $\bs=(b_i)_{1\leq i\leq 2g}$ and $\bs'=(b'_i)_{1\leq i\leq 2g}$ be bases of $\Al(K)$ and $\Al(K')$ respectively, associated with 
$V$ and $V'$. Let $P$ be the matrix of $\xi\otimes_\Z Id_\Q$ with respect to the bases $\bs$ and $\bs'$. By Proposition \ref{propcasinv}, 
$V'=PVP^t$. Let $\Gamma$ (resp. $\Gamma'$) be the lattice in $\Al(K')$ generated by the $\xi(b_i)$ (resp. by the $b_i'$). 
By Lemma \ref{lemmalattice}, $\Gamma$ and $\Gamma'$ are admissible. It is clear that $\Lambda\Gamma=\Lambda\Gamma'$. Hence, by Proposition 
\ref{propTrotter}, there is a sequence of admissible lattices $\Gamma_0=\Gamma, \Gamma_1, \dots, \Gamma_k=\Gamma'$ in $\Al(K')$, 
such that, for $1\leq j\leq k$, $\Gamma_{j-1}$ and $\Gamma_j$ are adjacent, and $z\Gamma_j\subset\Gamma_{j-1}$ 
or $z\Gamma_{j-1}\subset\Gamma_j$. 

By Lemma \ref{lemmaadjsymp}, for $1\leq j\leq k$, there are Seifert matrices $V_j$ and $\hat{V}_{j-1}$ associated with $\Gamma_j$ 
and $\Gamma_{j-1}$ respectively, and with symplectic bases of these lattices, such that $V_j=\Delta_{n_j}\hat{V}_{j-1}\Delta_{n_j}$, 
where $n_j$ is an integer or the inverse of an integer. Set $V_0=V$ and $\hat{V}_k=V'$. For $0\leq j\leq k$, the matrices $V_j$ and 
$\hat{V}_j$ are Seifert matrices associated with symplectic bases of the same lattice, and the change of basis provides an integral symplectic 
matrix $P_j$ such that $\hat{V_j}=P_jV_jP_j^t$. We have $P=P_k\Delta_{n_k}P_{k-1}\dots\Delta_{n_1}P_0$, and the $\tau$-class of the composition 
of the successive isomorphisms induced by the successive congruences is the $\tau$-class of the isomorphism induced by the congruence 
$V'=PVP^t$. Conclude with Lemma \ref{lemmastep}.
\end{proof}

\proofof{Theorem \ref{thSeqZ}} Proceed as in the proof of Proposition \ref{propSeq}. \fin

    \section{Topological realization of matrix relations} \label{sectop}

In this section, we prove Theorem \ref{thLP} and Theorem \ref{thLPZ}. 
\begin{lemma} \label{lemmaHH}
Let $M$ be a $\Q$HS (resp. $\Z$HS). Let $\Sigma$ be a genus $g$ closed connected surface embedded in $M$. 
Then $M\setminus \Sigma$ has exactly two connected components, whose closures are $\Q$HH's (resp. $\Z$HH's) 
of genus $g$.  
\end{lemma}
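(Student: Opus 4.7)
The plan is to extract the homology of each closure from two successive Mayer--Vietoris computations and to finish the distribution of ranks with the half-lives-half-dies principle; the argument runs in parallel over $\Q$ (for the $\Q$HS case) and over $\Z$ (for the $\Z$HS case). First I would show that $M\setminus\Sigma$ has exactly two connected components. Picking an open bicollar $U\cong\Sigma\times(-1,1)$ of $\Sigma$ and setting $V=M\setminus\Sigma$, one has $U\cap V\simeq\Sigma\sqcup\Sigma$. The tail of the Mayer--Vietoris sequence
\begin{equation*}
H_1(M)\longrightarrow H_0(\Sigma\sqcup\Sigma)\longrightarrow H_0(U)\oplus H_0(V)\longrightarrow H_0(M)\longrightarrow 0,
\end{equation*}
after using $H_1(M)=0$ and $H_0(M)=H_0(U)=\Z$ (or $\Q$), forces $H_0(V)$ to have rank $2$, so $V$ has exactly two components; I denote their closures by $A$ and $B$, each a compact oriented connected $3$-manifold with boundary $\Sigma$.

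Next, I would feed the decomposition $M=A\cup_\Sigma B$ into Mayer--Vietoris. Since $\Sigma=\partial A$ bounds in $A$, the class $[\Sigma]\in H_2(\Sigma)$ maps to $0$ in $H_2(A)$, and likewise in $H_2(B)$; together with $H_2(M)=0$ and $H_3(A)=H_3(B)=0$ (nonempty boundary), exactness immediately yields $H_2(A)\oplus H_2(B)=0$. The next portion
\begin{equation*}
0=H_2(M)\longrightarrow H_1(\Sigma)\longrightarrow H_1(A)\oplus H_1(B)\longrightarrow H_1(M)=0
\end{equation*}
gives an isomorphism $H_1(A)\oplus H_1(B)\cong H_1(\Sigma)$, of $\Q$-dimension $2g$ and, in the $\Z$ case, free of rank $2g$.

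Finally, I would invoke the half-lives-half-dies principle to pin down each summand individually: for the compact oriented $3$-manifold $A$ with boundary $\Sigma$, the kernel of $H_1(\Sigma;\Q)\to H_1(A;\Q)$ is a Lagrangian subspace for the intersection form, hence of dimension $g$, so $\dim_\Q H_1(A;\Q)\geq g$, and symmetrically for $B$; since the dimensions sum to $2g$, each equals $g$. In the integral case, $H_1(A;\Z)$ then appears as a direct summand of $\Z^{2g}$ of rank $g$, hence is itself free of rank $g$. Combined with $H_0(A)=\Z$, $H_2(A)=0$, and $H_3(A)=0$, this identifies $A$ as a genus $g$ handlebody in the appropriate category, and likewise for $B$. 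The main obstacle is precisely this last step: Mayer--Vietoris alone only detects the sum of the ranks of $H_1(A)$ and $H_1(B)$, and distributing it equally between the two pieces requires the (standard but non-formal) half-lives-half-dies input, while everything else is routine long-exact-sequence bookkeeping using that $\Sigma$ bounds both $A$ and $B$.
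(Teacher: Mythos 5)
Your proof is correct and follows essentially the same route as the paper: a Mayer--Vietoris computation for $M=A\cup_\Sigma B$ gives $H_2(A)=H_2(B)=0$ and $H_1(A)\oplus H_1(B)\cong H_1(\Sigma)$, with half-lives-half-dies (equivalently, $\chi(A)=\tfrac12\chi(\partial A)$) pinning each genus to $g$ and freeness of a direct summand of $\Z^{2g}$ handling the integral case. The only variation is cosmetic: you establish ``exactly two components'' via the Mayer--Vietoris sequence of a bicollar, where the paper argues geometrically that a path joining the two sides of $\Sigma$ would produce a closed curve meeting $\Sigma$ once, contradicting $[\Sigma]=0$ in $H_2(M;\Q)$.
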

\begin{proof}
Any point of $M\setminus\Sigma$ can be connected to a point of $\Sigma\times[-1,1]$ in $M\setminus\Sigma$. 
Since $(\Sigma\times[-1,1])\setminus\Sigma$ 
has two connected components, $M\setminus \Sigma$ has at most two connected components. 
Let $x_1$ and $x_2$ be points of $(\Sigma\times[-1,1])\setminus\Sigma$, one in each connected component. 
If there were a path from $x_1$ to $x_2$ in $M\setminus\Sigma$, we could construct 
a closed curve in $M$ which would meet $\Sigma$ exactly once. Since $M$ is a $\Q$HS, this is not possible. 
Hence $M\setminus\Sigma$ has exactly two connected components. Let $A_1$ and $A_2$ be their closures. Note that 
$\partial A_1=\partial A_2=\Sigma$ (up to orientation). 

For $i=1,2$, we have $H_3(A_i;\Z)=0$ and $H_0(A_i;\Z)=\Z$. The Mayer-Vietoris sequence associated with 
$M=A_1\cup A_2$ yields the exact sequence:
$$H_3(M;\Z) \fl{\partial} H_2(\Sigma;\Z) \longrightarrow H_2(A_1;\Z)\oplus H_2(A_2;\Z) \longrightarrow 0.$$
The map $\partial$ is an isomorphism that identifies the fundamental classes. Thus $H_2(A_1;\Z)=H_2(A_2;\Z)=0$.
It follows that $A_1$ and $A_2$ are $\Q$HH's. Their genus is given by their boundary.

Assume $M$ is a $\Z$HS. The Mayer-Vietoris sequence associated with $M=A_1\cup A_2$ yields an isomorphism 
$H_1(\Sigma;\Z) \cong H_1(A_1;\Z)\oplus H_1(A_2;\Z)$. Hence, for $i=1,2$, $H_1(A_i;\Z)$ is torsion-free, 
thus $A_i$ is a $\Z$HH.
\end{proof}

\begin{lemma} \label{lemmacongZ}
Let $(M,K,\Sigma,\fs,V)$ and $(M',K',\Sigma',\fs',V')$ be $\Q$SK-systems. Let $P$ be an integral symplectic matrix. 
Assume $V'=PVP^t$. Then there is a null LP-surgery $(\frac{B}{A})$ in $M\setminus K$ such that $(M',K')\cong(M,K)(\frac{B}{A})$ and 
the $\tau$-class of the isomorphism $\xi:\Al(K)\to\Al(K')$ induced by this surgery is the $\tau$-class induced by $P$. 
\end{lemma}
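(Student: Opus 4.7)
The plan is to realize the congruence $V' = PVP^t$ by a single null LP-surgery whose $\Q$HH's are the complements of the regular neighborhoods of the Seifert surfaces, and whose LP-identification encodes $P$ topologically.

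I would set $A := \overline{M \setminus T(\Sigma)}$ and $B := \overline{M' \setminus T(\Sigma')}$, where $T(\Sigma) = \hat\Sigma\times[-1,1]$ is the tubular neighborhood from Section~1. By Lemma~\ref{lemmaHH} applied to the closed connected genus-$2g$ surface $\partial T(\Sigma)$, $A$ (and similarly $B$) is a $\Q$HH of genus $2g$. Since any cycle $\gamma$ in $A$ is disjoint from $\Sigma$ and is rationally null-homologous in the $\Q$HS $M$, its linking with $K$, equal to its intersection with any Seifert surface of $K$, vanishes; hence $A$ is null in $M\setminus K$.

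For the LP-identification, I would use two classical facts: the pair $(T(\Sigma), K) = (\hat\Sigma\times[-1,1], \partial\hat\Sigma\times\{0\})$ is determined up to homeomorphism by the genus $g$, and the mapping class group of the compact genus-$g$ surface $\hat\Sigma$ (fixing the boundary) surjects onto $Sp(2g,\Z)$. Together these produce a homeomorphism of triples $h\colon(T(\Sigma),K,\Sigma)\to(T(\Sigma'),K',\Sigma')$ realizing $P$ at the level of the Seifert bases (i.e., $h_*(\fs)$ and $\fs'$ are related by $P$). Take $h|_{\partial T(\Sigma)}$ as the identification $\partial A\cong\partial B$. The surgery $(M,K)(\tfrac{B}{A})=T(\Sigma)\cup_{h|_\partial}B$ is then identified via $h$ with $T(\Sigma')\cup B=M'$ sending $K$ to $K'$, so $(M,K)(\tfrac{B}{A})\cong(M',K')$.

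For the $\tau$-class, I would apply Lemma~\ref{lemmacons1}: $\xi\colon\Al(K)\to\Al(K')$ comes from lifting $h$ to the infinite cyclic cover of the exterior of $T(\Sigma)$ and comparing generators. Tracking the action of $h$ on the meridians $e_i$ of $f_i$ via the duality $lk(e_j,f_k)=\pm\delta_{jk}$ combined with the prescribed action on $\fs$ forces $h_*(e_i)=\sum_k P_{ki}e_k'$ (up to sign and $\tau$-ambiguity), and lifting gives $\xi(b_i)=(b'P)_i$, precisely a representative of the $\tau$-class induced by $P$ in Lemma~\ref{lemmacons2}. The main obstacle will be verifying that $h$ is Lagrangian-preserving on the boundary: via the Mayer-Vietoris decomposition $H_1(\partial A;\Q)=\mathcal{L}_A\oplus\mathcal{L}_{T(\Sigma)}$, the projection of $\mathcal{L}_A$ onto the first factor is determined by $\fs$ and $V$, and one must check that the analogous projection of $h_*(\mathcal{L}_A)$ matches $\mathcal{L}_B$; any residual discrepancy in the $\mathcal{L}_{T(\Sigma')}$-component can be corrected by post-composing $h$ with self-homeomorphisms of $T(\Sigma')$ (such as Dehn twists along handle meridians that extend through their bounding disks in $T(\Sigma')$), which adjust the Lagrangian without disturbing the $\tau$-class.
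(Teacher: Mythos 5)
Your construction is the paper's: take $A=M\setminus Int(T(\Sigma))$ and $B=M'\setminus Int(T(\Sigma'))$, realize $P$ by a homeomorphism $h$ of the thickened Seifert surfaces using the surjection of the mapping class group onto $Sp(2g,\Z)$, and read the $\tau$-class off the meridians. But there is a genuine gap exactly at the point you defer as ``the main obstacle'': the verification that $(h|_{\partial A})_*(\Lag_A)=\Lag_B$ is the \emph{only} place in the whole proof where the hypothesis $V'=PVP^t$ is used, so it cannot be left as a check to be patched afterwards --- it is the content of the lemma. Concretely, $\Lag_A$ is spanned by $\alpha_i=f_i^+-\sum_j V_{ji}e_j$ (these have zero linking with every core $f_j$ of $T(\Sigma)$, hence die in $H_1(A;\Q)$). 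If $h_*$ sends $f_i^+$ to $\sum_k((P^t)^{-1})_{ki}(f'_k)^+$ and hence, by the duality $\langle e_i,f_j^+\rangle=\delta_{ij}$, sends $e_i$ to $\sum_k P_{ki}e'_k$ --- note that the transpose-inverse relationship between the action on the $f_i$ and on their meridians, which you leave ambiguous as ``related by $P$'', must be pinned down here --- then $h_*(\alpha_i)=\sum_k((P^t)^{-1})_{ki}\alpha'_k$ holds if and only if $PV=V'(P^t)^{-1}$, i.e.\ $V'=PVP^t$. Writing out this short computation closes the gap; without it the proof does not use the hypothesis at all.

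Your fallback mechanism would not rescue an incomplete version of this step. A Dehn twist along $e'_k$ extended over its meridian disk in $T(\Sigma')$ fixes every $e'_j$ and changes $h_*(f_i^+)$ only by $\langle h_*(f_i^+),e'_k\rangle\, e'_k$; the corrections reachable by composing such twists form the very restricted family $\mathrm{diag}(n_1,\dots,n_{2g})\cdot(P^t)^{-1}$ acting on the $e'$-components, not an arbitrary ``residual discrepancy''. If $V'\neq PVP^t$ the Lagrangian condition simply fails and no such twisting repairs it; if $V'=PVP^t$ there is nothing to repair. The remainder of your argument (nullity of $A$ via linking numbers with $K$, identification of the surgered pair with $(M',K')$, and the $\tau$-class read from the $e_i$, whose lifts are the generators $b_i$) matches the paper.
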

\begin{proof}
There is a homeomorphism $h:\Sigma\to\Sigma'$, such that the matrix of the induced isomorphism $h_*:H_1(\Sigma;\Z)\to H_1(\Sigma';\Z)$ 
with respect to the bases $(f_i)_{1\leq i\leq 2g}$ and $(f'_i)_{1\leq i\leq 2g}$ is $(P^t)^{-1}$ (see \cite[theorem 6.4]{FM}). 
Let $\hat{\Sigma}$ be obtained from $\Sigma$ by adding a band glued along $\partial \Sigma$, 
so that $\hat{\Sigma}$ is homeomorphic to $\Sigma$, and contains $\Sigma$ and $K$ in its interior. Let $H=\hat{\Sigma}\times[-1,1]$ be 
a regular neighborhood of $\Sigma$. Similarly, define $\hat{\Sigma}'$ and $H'$. 
Extend $h$ to a homeomorphism $h:\hat{\Sigma}\to\hat{\Sigma}'$ , and then extend it by product with the identity to a homeomorphism $h:H\to H'$. 

Let $A=M\setminus Int(H)$ and let $B=M'\setminus Int(H')$. By Lemma \ref{lemmaHH}, $A$ (resp. $B$) is a $\Q$HH, and it is clearly null 
in $M\setminus K$ (resp. $M'\setminus K'$). The $\Q$SK-pair $(M',K')$ is obtained from $(M,K)$ 
by the surgery $(\frac{B}{A})$. Let us prove that the homeomorphism $h_{|\partial A}:\partial A \to \partial B$ preserves the Lagrangian. 

For $1\leq i\leq 2g$, let $e_i\subset\partial H$ be a meridian of $f_i$. 
The Lagrangian $\Lag_{A}$ is generated by the $\alpha_i=f_i^+ -\sum_{1\leq j\leq 2g} V_{ji}e_j$, where $f_i^+$ is the copy of $f_i$ in 
$(\Sigma\times\{1\})\subset H$. Similarly, define the $e_i'$, $(f'_i)^+$ and $\alpha_i'$. Since $h:H\to H'$ is a homeomorphism, 
$(h_{|\partial H})_*(\Lag_H)=\Lag_{H'}$. Hence $(h_{|\partial H})_*(e_i)$ is a linear combination of the $e_j'$. 
Since $\langle h(e_i),h(f_j^+)\rangle_{\partial H'}=\langle e'_i,(f'_j)^+\rangle_{\partial H'}=\delta_{ij}$, 
an easy computation gives $(h_{|\partial H})_*(e_i)=\lbp\begin{pmatrix} e'_1 & \dots & e'_{2g} \end{pmatrix}P\rbp_i$. 
It follows that $(h_{|\partial H})_*(\alpha_i)=\lbp\begin{pmatrix} \alpha'_1 & \dots & \alpha'_{2g} \end{pmatrix}(P^t)^{-1}\rbp_i$. 
Hence $(h_{|\partial A})_*(\Lag_A)=\Lag_B$.

The relation between the $h(e_i)$ and the $e_i'$ shows that the isomorphism $\xi$ induced by the surgery is in the $\tau$-class 
of isomorphisms induced by the congruence matrix $P$.
\end{proof}

The previous proof still works when $\Q$ is replaced by $\Z$. Therefore:
\begin{lemma} \label{lemmaZcongZ}
Let $(M,K,\Sigma,\fs,V)$ and $(M',K',\Sigma',\fs',V')$ be $\Z$SK-systems. Let $P$ be an integral symplectic matrix. 
Assume $V'=PVP^t$. Then there is an integral null LP-surgery $(\frac{B}{A})$ in $M\setminus K$ such that $(M',K')\cong(M,K)(\frac{B}{A})$ and 
the $\tau$-class of the isomorphism $\xi:\Al_\Z(K)\to\Al_\Z(K')$ induced by this surgery is the $\tau$-class induced by $P$. 
\end{lemma}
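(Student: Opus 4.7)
The plan is to mimic the proof of Lemma \ref{lemmacongZ} verbatim, checking at each step that the integrality hypotheses ensure we stay in the $\Z$-category. Since $P$ is integral and symplectic, Theorem~6.4 of \cite{FM} still provides a homeomorphism $h:\Sigma\to\Sigma'$ whose induced map on $H_1(\cdot;\Z)$ has matrix $(P^t)^{-1}$ in the symplectic bases $\fs$ and $\fs'$; this is the same input as in the rational case, so no change is needed there.

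Next I would thicken $\Sigma$ to $\hat\Sigma$ and take the product neighborhood $H=\hat\Sigma\times[-1,1]$, then extend $h$ to a homeomorphism $H\to H'$ by composing the surface map with the identity on $[-1,1]$. Setting $A=M\setminus\mathrm{Int}(H)$ and $B=M'\setminus\mathrm{Int}(H')$, the essential new ingredient is the $\Z$-statement of Lemma \ref{lemmaHH}: because $M$ and $M'$ are $\Z$HS's, the complements $A$ and $B$ are genus-$g$ $\Z$HH's, not merely $\Q$HH's. Nullity of $A$ in $M\setminus K$ (and of $B$ in $M'\setminus K'$) is immediate since $K\subset\mathrm{Int}(H)$ and the generators of $H_1(A;\Z)$ extend, via $H\setminus\Sigma$, to cycles that are nullhomologous after pushing through $H$; the argument given for $\Q$ coefficients in Lemma~\ref{lemmacongZ} is unchanged once $\Z$HH structure is in hand. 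Thus the surgery $(\tfrac{B}{A})$ is an \emph{integral} null LP-surgery.

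To check Lagrangian preservation and compute the induced $\tau$-class, I would copy the linking computation from the proof of Lemma~\ref{lemmacongZ}: writing $\alpha_i=f_i^+-\sum_j V_{ji}e_j$ and analogously $\alpha_i'$, the relation $(h_{|\partial H})_*(e_i)=(e'P)_i$ follows from the intersection pairings $\langle h(e_i),h(f_j^+)\rangle=\delta_{ij}$, and then $V'=PVP^t$ yields $(h_{|\partial A})_*(\Lag_A)=\Lag_B$ directly. Since $P$ is integral, every coefficient appearing in these identities is integral, so the computation lives entirely in $\Zt$, and the induced isomorphism $\xi$ on $\Al_\Z(K)$ is well defined and lies in the prescribed $\tau$-class.

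The only potential obstacle is making sure that Lemma~\ref{lemmaHH} really does upgrade to the $\Z$HH conclusion in this setting—but that is precisely the content of its last paragraph via the Mayer--Vietoris splitting of $H_1(\Sigma;\Z)$. Once that is invoked, every remaining step transports verbatim from the rational proof, so the lemma follows with no additional work beyond recording that every matrix, homeomorphism, and homological identity in sight is integral.
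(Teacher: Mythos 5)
Your proposal is correct and matches the paper exactly: the paper's proof of Lemma \ref{lemmaZcongZ} is simply the remark that the proof of Lemma \ref{lemmacongZ} goes through verbatim with $\Z$ in place of $\Q$, the only substantive point being the one you identify, namely that Lemma \ref{lemmaHH} upgrades $A$ and $B$ to $\Z$HH's when $M$ and $M'$ are $\Z$HS's, while all the matrix identities are already integral because $P$ and $V$ are.
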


\begin{lemma} \label{lemmaenl}
 Let $(M,K,\Sigma,\fs,V)$ be a $\Q$SK-system. Let $W$ be an enlargement of $V$. 
Then there is a $\Q$SK-system $(M',K',\Sigma',\fs',W)$ such that $(M',K')$ can be obtained from $(M,K)$ by a single null LP-surgery, 
and the surgery and the enlargement induce the same $\tau$-class of isomorphisms from $\Al(K)$ to $\Al(K')$.
\end{lemma}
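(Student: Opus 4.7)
I treat the row enlargement case; the column case is analogous. The plan is to realize the enlargement geometrically by attaching a tube to $\Sigma$ in $M$, and then correcting the resulting Seifert matrix by a single null LP-surgery. I start by choosing an arc $\gamma\subset M\setminus(\Sigma\cup K)$ with endpoints on two small disjoint disks of $\Sigma$, thickening $\gamma$ to an annulus, and gluing it in place of these two disks. This produces a genus $g+1$ Seifert surface $\tilde\Sigma$ of $K$ together with an extended symplectic basis $\tilde\fs=(e_1,e_2,f_1,\dots,f_{2g})$, where $e_1$ is a meridian of the new tube (bounding a disk in $M$) and $e_2$ is a longitude running once through the tube. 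The Seifert matrix of $\tilde\Sigma$ in $\tilde\fs$ then has the block form
$$V_0=\begin{pmatrix} 0 & 0 & 0 \\ 1 & x_0 & \rho_0^t \\ 0 & \rho_0 & V \end{pmatrix}$$
with the vanishing first row forced by $e_1$ bounding in $M$, and with $x_0\in\Q$ and $\rho_0\in\Q^{2g}$ depending on the position and framing of $\gamma$.

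Next, I perform a single null LP-surgery on $(M,K)$, supported in a neighborhood of the tube, that preserves $\tilde\Sigma$ and $\tilde\fs$ set-theoretically but shifts the Seifert data from $V_0$ to $W$. Since $V_0$ and $W$ differ only in the entries $(2,2)$, $(2,j+2)$, and $(j+2,2)$, this amounts to rationally adjusting $lk(e_2,e_2^+)$ by $x-x_0$ and each $lk(e_2,f_j^+)=lk(f_j,e_2^+)$ by $\rho_j-(\rho_0)_j$, without affecting the remaining linking numbers. I construct the pair $(A,B)$ of LP-related rational homology handlebodies realizing these adjustments by placing the tube, together with auxiliary curves carrying the desired linking shifts, inside a suitable null $\Q$HH $A\subset M\setminus K$, and exhibiting a replacement $\Q$HH $B$ with the same LP-identified boundary whose internal linking pairing realizes the prescribed rational shifts. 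The existence of such $B$ relies on the flexibility of $\Q$HHs exhibited in the proof of Lemma \ref{lemmarealmat}, which allows arbitrary prescribed rational linking data to be realized inside a rational homology handlebody with fixed boundary and Lagrangian.

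Setting $(M',K')=(M,K)(\frac{B}{A})$, the canonical isomorphism $\xi:\Al(K)\to\Al(K')$ provided by Lemma \ref{lemmacons1} lies, by construction, in the $\tau$-class induced by the enlargement via Lemma \ref{lemmacons2}: under the presentations of the two Alexander modules attached to $V_0$ and $W$ in the common basis $\tilde\fs$, both isomorphisms identify the $i$-th Alexander generator on one side with the $i$-th generator on the other. The main obstacle is the middle step: producing a single null LP-surgery that realizes the required rational shifts in those specific linking numbers while leaving all other entries of the Seifert matrix untouched. This is where working rationally is essential, since integral LP-surgeries lack the flexibility to realize arbitrary rational modifications of this form.
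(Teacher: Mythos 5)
Your first and last steps are fine (tubing $\Sigma$ along an arc does produce an enlarged Seifert matrix of the stated block form, and the bookkeeping of $\tau$-classes is as you say), but the middle step --- the one you yourself flag as ``the main obstacle'' --- is not just unproven, it is impossible as stated. You ask for a single null LP-surgery that ``preserves $\tilde\Sigma$ and $\tilde\fs$ set-theoretically but shifts the Seifert data from $V_0$ to $W$''. If the surgered handlebody $A$ is disjoint from $\tilde\Sigma$ (which is what preserving $\tilde\Sigma$ set-theoretically forces), then the surgery cannot change any entry of the Seifert matrix: an LP-surgery preserves the linking number of every pair of disjoint curves lying in the complement of $A$. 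This is exactly the argument in the proof of Lemma \ref{lemmacons1}: a multiple of $\partial(\Sigma\cap A)$ lies in $\mathcal{L}_A=\mathcal{L}_B$ and can be capped off inside $B$, so $lk(e_2,f_j^+)$ and $lk(e_2,e_2^+)$ are computed by intersections taking place entirely outside $A$ and are unchanged. So no choice of $B$, however flexible, can realize the shifts $x-x_0$ and $\rho-\rho_0$. The appeal to Lemma \ref{lemmarealmat} does not help either: that lemma realizes a prescribed linking matrix of a framed link in \emph{some} $\Q$HS, not prescribed rational changes of linking numbers of curves fixed outside a handlebody with fixed LP-identified boundary.

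The paper resolves this by reversing the order of operations: the curve with fractional linking numbers must be \emph{created inside the new handlebody}, not corrected after the fact. Concretely, writing $\rho_i=c_i/d_i$, one takes unknots $J_i$ disjoint from $\Sigma$ with $lk(J_i,f_j)=\delta_{ij}c_i$ and replaces their solid torus neighborhoods by rational homology tori (from \cite[Lemma 2.5]{M2}) containing curves $\gamma_i$ with $d_i\gamma_i$ homologous to the old longitude, so that $lk(\gamma_i,f_j)=\delta_{ij}c_i/d_i$; a ball is likewise replaced by a rational homology ball containing a curve $\gamma_0$ correcting the self-linking $x$ modulo $\Z$. Only \emph{then} is the tube attached to $\Sigma$, along a band sum of the $\gamma_i$. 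The disjoint surgered pieces are joined by tubes into a single genus $2g$ null $\Q$HH, giving one null LP-surgery. If you want to keep your ``tube first, fix later'' narrative, you would have to let $A$ contain part of the tube's core and replace that part by a new arc inside $B$ with different linking behavior --- which contradicts preserving $\tilde\Sigma$ and is, in substance, the paper's construction.
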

\begin{proof}
We have $$W=\begin{pmatrix} 0 & 0 & 0 \\ 1 & x & \rho^t \\ 0 & \rho & V \end{pmatrix} \textrm{ or }
\begin{pmatrix} 0 & -1 & 0 \\ 0 & x & \rho^t \\ 0 & \rho & V \end{pmatrix}.$$
We want to add a tube to $\Sigma$, whose linking numbers with the $f_i$ are given by $\rho$.
This may not be possible in $M$, so we first modify $M$ by null LP-surgeries.

Set $\rho=\begin{pmatrix} \frac{c_1}{d_1} \\ \vdots \\ \frac{c_{2g}}{d_{2g}} \end{pmatrix}$, where the $c_i$ and $d_i$ are integers,
and $d_i>0$. Consider trivial knots $J_i$, disjoint from $\Sigma$, such that $lk(J_i,f_j)=\delta_{ij} c_i$.
For each $i$, consider a tubular neighborhood $T(J_i)$ of $J_i$. By \cite[Lemma 2.5]{M2}, there are rational homology tori $A_i$ 
that satisfy:
\begin{itemize}
 \item $H_1(\partial A_i;\Z)=\Z \alpha_i \oplus \Z\beta_i$, with $<\alpha_i,\beta_i>=1$,
 \item $\beta_i=d_i\gamma_i$ in $H_1(A_i;\Z)$, where $\gamma_i$ is a curve in $A_i$,
 \item $H_1(A_i;\Z)=\Z\gamma_i\oplus\frac{\Z}{d_i\Z}\alpha_i$.
\end{itemize}
Let $N$ be the manifold obtained from $M$ by the null LP-surgeries $(\frac{A_i}{T(J_i)})$, 
where the identifications $\partial T(J_i)=\partial A_i$ identify $\alpha_i$ with a meridian of $J_i$, and $\beta_i$ with 
a parallel of $J_i$ that does not link $J_i$. We get $lk(\gamma_i,f_j)=\delta_{ij}\frac{c_i}{d_i}$.

In $N$, consider a ball $B$ disjoint from $\Sigma$ and all the $A_i$. Consider a rational homology ball $B'$ that contains
a curve $\gamma_0$ with self-linking $\lbp x-\sum_{1\leq i,j\leq 2g} lk(\gamma_i,\gamma_j)\rbp\ mod\ \Z$. Set $M'=N(\frac{B'}{B})$, and $K'=K$.
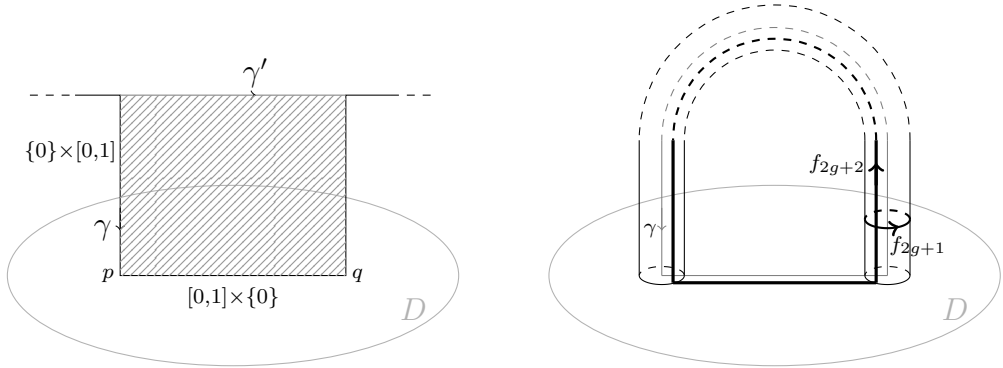
\begin{figure}[htb]
\begin{center}
\begin{tikzpicture}[scale=0.3]
\begin{scope}
\draw[color=gray!60] (5,0) ellipse (10 and 4);
\draw (-2,8) -- (0,8) -- (0,0) -- (10,0) -- (10,8) -- (12,8);
\draw[->] (0,3) -- (0,2);
\draw[dashed] (-4,8)-- (-2,8) (12,8) -- (14,8);
\draw[gray] (0,8) -- (10,8);
\draw[->] (5.9,8) -- (6,8);
\draw (0.2,0) node[left]{$\scriptstyle{p}$};
\draw (9.8,0) node[right]{$\scriptstyle{q}$};
\draw[color=gray!60] (13,-1.5) node{$D$};
\draw (0.1,2) node[left] {$\gamma$};
\draw (6,9) node{$\gamma'$};
\draw (5,-1) node{$\scriptstyle{[0,1]\times\{0\}}$};
\draw (-2.2,5.5) node{$\scriptstyle{\{0\}\times [0,1]}$};
\fill[pattern=north east lines,pattern color=gray!80] (0,8) -- (0,0) -- (10,0) -- (10,8) -- (0,8);
\end{scope}
\begin{scope} [xshift=24cm]
\draw[gray] (0,0) -- (10,0);
\draw[color=gray!60] (5,0) ellipse (10 and 4);
\foreach \a in {0,10}
{\draw[domain=0:180,dashed] plot ({\a+cos(\x)},{0.4*sin(\x)});
\draw[domain=180:360] plot ({\a+cos(\x)},{0.4*sin(\x)});}
\foreach \x in {4,6} 
  {\draw (5-\x,0) -- (5-\x,6);
   \draw[dashed] (5+\x,6) arc (0:180:\x);
   \draw (5+\x,6) -- (5+\x,0);}
\draw[gray] (0,0) -- (0,6);
\draw[gray,dashed] (10,6) arc (0:180:5);
\draw[gray] (10,6) -- (10,0);
\draw[very thick] (0.5,-0.32) -- (9.5,-0.32);
\draw[very thick] (0.5,-0.32) -- (0.5,6);
\draw[thick,dashed] (9.5,6) arc (0:180:4.5);
\draw[very thick] (9.5,6) -- (9.5,-0.32);
\draw[domain=0:180,dashed,thick] plot ({10+cos(\x)},{2.5+0.4*sin(\x)});
\draw[domain=180:360,thick] plot ({10+cos(\x)},{2.5+0.4*sin(\x)});
\draw[very thick,->] (9.5,4) -- (9.5,5);
\draw[very thick,->] (10.4,2.15) -- (10.5,2.18);
\draw[->,gray] (0,3) -- (0,2);
\draw[color=gray!60] (13,-1.5) node{$D$};
\draw (0.25,2) node[left] {$\scriptstyle{\gamma}$};
\draw (9.5,4.9) node[left] {$\scriptstyle{f_{2g+2}}$};
\draw (11.35,1.3) node {$\scriptstyle{f_{2g+1}}$};
\end{scope}
\end{tikzpicture}
\end{center} \caption{Adding a tube to $\Sigma$} \label{figtube}
\end{figure}

Define a curve $\gamma'$ in $M'$ as a band sum of the $\gamma_i$ for $0\leq i\leq 2g$, with bands outside $\Sigma$. 
Consider a disk $D$ in $\Sigma$, and two distinct points 
$p$ and $q$ in $D$. Consider an embedded band $[0,1]\times[0,1]$ in $M'$ such that $[0,1]\times\{0\} =([0,1]\times[0,1])\cap\Sigma$
is a curve from $p$ to $q$ in $D$, $[0,1]\times\{1\}=([0,1]\times[0,1])\cap\gamma'$, and the tangent vector 
to $\{0\}\times[0,1]$ at $\{0\}\times\{0\}$ is the positive normal vector of $\Sigma$ if $W$ is a row enlargement of $V$, 
and the negative one if it is a column enlargement. Figure \ref{figtube} represents the first case. 
Now set $\gamma=(\gamma'\cup\partial ([0,1]\times [0,1]))\setminus (]0,1[\times \{1\})$, and construct a surface $\Sigma'$
by adding a tube around $\gamma\setminus([0,1]\times\{0\})$ to $\Sigma$. The surface $\Sigma'$ is a Seifert surface for $K'$.  
On $\Sigma'$, consider a meridian $f_{2g+1}$ of the tube and a parallel $f_{2g+2}$ of $\gamma$ such that $<f_{2g+1},f_{2g+2}>_{\Sigma'}=1$ 
and $lk(f_{2g+2},\gamma)=x$. Note that the orientation of the meridian depends on the type of enlargement. 
The Seifert matrix associated with $\Sigma'$ with respect to the basis $(f_{2g+1},f_{2g+2},f_1,\dots,f_{2g})$ is $W$.

Since the different $\Q$HH's replaced by surgery are disjoint, they can be connected by tubes. Thus $(M',K')$ can be
obtained from $(M,K)$ by one surgery on a genus $2g$ $\Q$HH. Let $(b_i)_{1\leq i\leq 2g}$ (resp. $(b'_i)_{1\leq i\leq 2g+2}$) 
be a family of generators of $\Al(K)$ (resp. $\Al(K')$) associated with $V$ (resp. $W$). The $b_i'$ can be chosen so that the isomorphism 
$\xi:\Al(K)\to\Al(K')$ induced by the surgery satisfies $\xi(b_i)=b'_{i+2}$.
\end{proof}

\begin{lemma} \label{lemmaenlZ}
 Let $(M,K,\Sigma,\fs,V)$ be a $\Z$SK-system. Let $W$ be an integral enlargement of $V$. 
Then there is a $\Z$SK-system $(M,K,\Sigma',\fs',W)$ and the enlargement induces the $\tau$-class of the identity of $\Al_\Z(K)$.
\end{lemma}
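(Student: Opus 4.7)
The plan is to adapt the proof of Lemma \ref{lemmaenl} to the integral setting. In that proof, the rational character of the enlargement forced two preliminary modifications of $M$: null LP-surgeries replacing tubular neighborhoods $T(J_i)$ by rational homology tori $A_i$ to produce a curve with the desired fractional linking numbers $\rho_j$ with the $f_j$, and a further LP-surgery replacing a ball by a rational homology ball to correct the self-linking $x$. When $W$ is an \emph{integral} enlargement, $\rho\in\Z^{2g}$ and $x\in\Z$, and both preliminary surgeries become unnecessary: the whole construction can be carried out inside the unchanged $\Z$HS $M$, producing a new Seifert surface $\Sigma'$ for the same knot $K$.

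Concretely, I would first build in $M\setminus\Sigma$ a framed knot $\gamma'$ with $lk(\gamma',f_j)=\rho_j$ for all $j$. Since $M$ is a $\Z$HS, any integer is realizable as a linking number, so one can take pairwise disjoint trivial knots $J_i\subset M\setminus\Sigma$ with $lk(J_i,f_j)=\rho_i\delta_{ij}$ and let $\gamma'$ be a band sum of the $J_i$ along bands disjoint from $\Sigma$. Then I would adjust the framing of $\gamma'$ by integer twists so that its self-linking equals $x$; again, this is possible because self-linking in a $\Z$HS takes arbitrary integer values through framing changes.

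Next, exactly as in the proof of Lemma \ref{lemmaenl}, I would attach a tube along $\gamma'$ to $\Sigma$ via a band $[0,1]\times[0,1]$ as in Figure \ref{figtube}, choosing the sign of the tangent vector of $\{0\}\times[0,1]$ at $\Sigma$ to be the positive or negative normal of $\Sigma$ according as $W$ is a row or column enlargement of $V$. This produces a Seifert surface $\Sigma'$ for $K$ inside $M$. Extending $\fs$ by a meridian $f_{2g+1}$ of the tube and a longitude $f_{2g+2}$ parallel to $\gamma'$, with $\langle f_{2g+1},f_{2g+2}\rangle_{\Sigma'}=1$, one obtains a symplectic basis $\fs'$ of $H_1(\Sigma';\Z)$ whose associated Seifert matrix is $W$ by construction, so $(M,K,\Sigma',\fs',W)$ is a $\Z$SK-system.

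For the induced $\tau$-class, one argues as in the enlargement case of Lemma \ref{lemmacons2}: the first two generators $b'_1,b'_2$ of the presentation of $\Al_\Z(K)$ coming from $W$ are meridians of $f_{2g+1}$ and $f_{2g+2}$, and the first two columns of $tW-W^t$ make them trivial in $\Al_\Z(K)$, while the remaining generators $b'_{i+2}$ ($1\leq i\leq 2g$) are the same meridians of the $f_i$ that already generated $\Al_\Z(K)$ in the presentation associated with $V$. Hence the canonical isomorphism induced by the enlargement sends $b_i$ to $b'_{i+2}$, that is, it is the identity of $\Al_\Z(K)$ on the nose. No nontrivial step is expected to be hard here; the only real content is the observation that the integrality of $\rho$ and $x$ removes the need to modify $M$, which is transparent from the construction above.
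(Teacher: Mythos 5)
Your proposal is correct and follows exactly the route the paper takes: its proof of this lemma literally says to repeat the proof of Lemma \ref{lemmaenl} with $\Z$ in place of $\Q$ and to drop the preliminary surgeries, since any integral linking number (and framing) is realizable in a $\Z$HS. Your identification of the induced isomorphism with the identity via the first two columns of $tW-W^t$ matches the enlargement case of Lemma \ref{lemmacons2} as intended.
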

\begin{proof}
 In the previous proof, replace $\Q$ by $\Z$, and remove the definition of the surgery, since any integral linking can be realised 
in any $\Z$HS.
\end{proof}

\proofof{Theorem \ref{thLP}}
Let $V$ and $V'$ be Seifert matrices associated with $(M,K)$ and $(M',K')$ respectively. 
By Theorem \ref{thSeq}, $V'$ can be obtained from $V$ by a sequence of enlargements, reductions, and integral symplectic 
congruences which induces the $\tau$-class of the isomorphism $\xi$. This provides a finite sequence $V_1,V_2,..,V_n$ of Seifert matrices 
such that $V_1=V$, $V_n=V'$, and $V_{i+1}$ is obtained from $V_i$ by one of these equivalences. By Lemma \ref{lemmarealmat}, 
for each $i$, we can fix a $\Q$SK-system $\CS_i$ with Seifert matrix $V_i$. To see that $\CS_{i+1}$ can be obtained from $\CS_i$ 
by one, or two successive, null LP-surgeries, which induce the required $\tau$-class of isomorphisms, apply Lemma \ref{lemmacongZ} 
in the case of a (possibly trivial) congruence, and apply Lemma \ref{lemmaenl} in the case of an enlargement or a reduction.
\fin 

Similarly, Theorem \ref{thLPZ} can be deduced from Theorem \ref{thSeqZ} and Lemmas \ref{lemmarealmat}, \ref{lemmaZcongZ}, and~\ref{lemmaenlZ}.

    \section{Sequences of LP-surgeries} \label{secfin}

In this section, we prove Proposition \ref{propfin}. 

\begin{lemma} \label{lemmaex}
 There exist two knots in $S^3$ which have isomorphic rational Blanchfield forms, and different integral 
Alexander modules.
\end{lemma}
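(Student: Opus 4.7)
The plan is to exhibit two knots $K_1, K_2 \subset S^3$ via explicit integer Seifert matrices (realized through Lemma \ref{lemmarealmat}), chosen so that their common Alexander polynomial $\Delta(t) = 2t^2 - 5t + 2 = -(2t-1)(2-t)$ factors into linear polynomials that are coprime in $\Qt$ but not in $\Zt$: over $\Q$ we have $\tfrac{1}{3}(2t-1) + \tfrac{2}{3}(2-t) = 1$, while over $\Z$ the best relation is $(2t-1) + 2(2-t) = 3$. This discrepancy is exactly what will let the rational Alexander modules coincide while the integral ones differ.

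Concretely, I take
$$V_1 = \begin{pmatrix} 1 & 0 \\ 1 & -2 \end{pmatrix}, \qquad V_2 = \begin{pmatrix} 0 & 1 \\ 2 & 0 \end{pmatrix}.$$
Both satisfy $V_i - V_i^t = J$ and have $\det(tV_i - V_i^t) = -(2t-1)(2-t)$, and by Lemma \ref{lemmarealmat} each is realized by a knot $K_i \subset S^3$. Reading the columns of $tV_i - V_i^t$ as relations, one computes
$$\Al_\Z(K_1) \cong \Zt/(2t^2 - 5t + 2) \quad\textrm{(cyclic)},$$
$$\Al_\Z(K_2) \cong \Zt/(2t-1) \oplus \Zt/(t-2) \quad\textrm{(split)};$$
for $V_1$, the second column relation $b_1 = (2-2t)b_2$ eliminates a generator, while for $V_2$ the antidiagonal presentation yields two independent relations. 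To see these modules are non-isomorphic, reduce modulo $3$: one gets $\Al_\Z(K_1)/3 \cong \mathbb{F}_3[t^{\pm 1}]/(t+1)^2$, on which $(t+1)$ acts as a nonzero nilpotent, versus $\Al_\Z(K_2)/3 \cong \mathbb{F}_3 \oplus \mathbb{F}_3$ (with $t$ acting as $-1$ on each summand), on which $(t+1)$ acts as zero.

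For the rational Blanchfield forms, the Chinese remainder theorem over $\Qt$ gives $\Al(K_1) \cong \Al(K_2) \cong \Qt/(2t-1) \oplus \Qt/(2-t)$ as $\Qt$-modules. The two summands are Galois conjugate under $t \mapsto t^{-1}$ (since $\overline{2t-1}$ and $2-t$ differ by the unit $t^{-1}$ in $\Qt$). I will show that any non-degenerate hermitian form $\phi$ on this module vanishes on each summand: since $e_1$ is annihilated by $(2t-1)$, both $(2t-1)$ and $\overline{(2t-1)} \doteq (2-t)$ must annihilate $\phi(e_1, e_1) \in \Q(t)/\Qt$; but elements killed by $(2t-1)$ are concentrated at the pole $t = 1/2$ while those killed by $(2-t)$ are concentrated at $t = 2$, so the intersection is $\{0\}$. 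Thus $\phi$ is entirely determined by the off-diagonal value $\phi(e_1, e_2) \in \tfrac{1}{2t-1}\Qt/\Qt \cong \Q$, and the module automorphisms ($e_1 \mapsto u e_1$, $e_2 \mapsto v e_2$ for $u,v \in \Q^*$) act by $\phi(e_1,e_2) \mapsto uv\,\phi(e_1,e_2)$, transitively on nonzero values. So the non-degenerate hermitian form is unique up to isomorphism, whence $\phi_{K_1} \cong \phi_{K_2}$ over $\Q$.

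The main subtlety is this last uniqueness: one must argue both that the diagonal entries are forced to vanish (using that the two factor polynomials have distinct roots $1/2$ and $2$ in $\Q$) and that the one remaining off-diagonal scalar is eliminated by the module's automorphism group. Once that is in place, everything else reduces to routine matrix computations from the explicit $V_1, V_2$.
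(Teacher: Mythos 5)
Your construction is correct and is essentially the paper's: two explicit $2\times 2$ integral Seifert matrices with common Alexander polynomial $(2t-1)(2-t)$, one presenting a cyclic integral Alexander module and the other a split one. (The paper uses $\begin{pmatrix} -1 & 0 \\ 1 & 2\end{pmatrix}$ and $\begin{pmatrix} 3 & 1 \\ 2 & 0\end{pmatrix}$, but the matrices play the same roles as your $V_1,V_2$; note a harmless sign slip in your determinant, which is $+(2t-1)(2-t)$.) The two sub-steps are justified differently, though. For the rational Blanchfield forms the paper simply cites the fact that a product of two dual non-symmetric prime factors forces a unique non-degenerate form (\cite[Lemma 3.6]{M1}), whereas you prove this from scratch: the diagonal values vanish because each is annihilated by the two coprime polynomials $2t-1$ and $2-t$, and the remaining off-diagonal scalar in $\tfrac{1}{2t-1}\Qt/\Qt\cong\Q$ is normalized away by the automorphism group $\Q^*\times\Q^*$; this is a correct, self-contained replacement for the citation. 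To distinguish the integral modules, the paper computes the second elementary ideal, generated by $t-2$ and $2t-1$, and evaluates at $t=-1$ to land in $3\Z$; you instead reduce the modules mod $3$ and observe that $t+1$ acts nilpotently but nontrivially on $\mathbb{F}_3[t^{\pm1}]/((t+1)^2)$ versus trivially on $\mathbb{F}_3\oplus\mathbb{F}_3$. These detect the same phenomenon and both are valid; your version avoids elementary ideals at the cost of an explicit factorization mod $3$.
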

\begin{proof}
 In $S^3$, consider a knot $K$ with Seifert matrix $\begin{pmatrix} -1 & 0 \\ 1 & 2 \end{pmatrix}$, and a knot $K'$
with Seifert matrix $\begin{pmatrix} 3 & 1 \\ 2 & 0 \end{pmatrix}$. Their Alexander modules have presentation matrices 
$\begin{pmatrix} 1-t & -1 \\ t & 2t-2 \end{pmatrix}$ and $\begin{pmatrix} 3t-3 & t-2 \\ 2t-1 & 0 \end{pmatrix}$.
Both have Alexander polynomial $\Delta(t)=(2t-1)(2-t)$. Since it is the product of two dual non symmetric prime polynomials,
their rational Blanchfield forms are isomorphic (see \cite[Lemma 3.6]{M1}). 
But $K$ has integral Alexander module $\frac{\Z[t^{\pm1}]}{(\Delta(t))}$, whereas the integral Alexander module of $K'$ has a non 
trivial second elementary ideal (the $k$-th elementary ideal associated with a $\Z[t^{\pm1}]$-module 
is the ideal of $\Z[t^{\pm1}]$ generated by the minors of size $n-k+1$ of a presentation matrix with $n$ generators of the module, 
see \cite[Chapter 6]{Lick}). Indeed, this ideal is generated by $(t-2)$ and $(2t-1)$ in $\Z[t^{\pm1}]$, so the evaluation at $t=-1$ 
maps it onto $3\Z$.
\end{proof}

\proofof{Proposition \ref{propfin}} 
Consider the $\Q$SK-pairs $(S^3,K)$ and $(S^3,K')$ of Lemma \ref{lemmaex}. 
By Theorem \ref{thLP}, $(S^3,K')$ can be obtained from $(S^3,K)$ by a finite sequence of null LP-surgeries. 
Suppose we can restrict to a single surgery $(\frac{B}{A})$. Then $A$ and $B$ would be $\Q$HH's embedded in a $\Z$HS. 
It follows from Lemma \ref{lemmaHH} that $A$ and $B$ would be $\Z$HH's. 
Thus, by Lemma \ref{lemmaintA}, the surgery would preserve the integral Alexander module.\fin

\def\cprime{$'$}
\providecommand{\bysame}{\leavevmode ---\ }
\providecommand{\og}{``}
\providecommand{\fg}{''}
\providecommand{\smfandname}{\&}
\providecommand{\smfedsname}{\'eds.}
\providecommand{\smfedname}{\'ed.}
\providecommand{\smfmastersthesisname}{M\'emoire}
\providecommand{\smfphdthesisname}{Th\`ese}


\begin{thebibliography}{Mou12b}

\bibitem[AL05]{AL}
{\scshape E.~Auclair {\normalfont \smfandname} C.~Lescop} -- {\og Clover
  calculus for homology 3-spheres via basic algebraic topology\fg},
  \emph{Algebr. Geom. Topol.} \textbf{5} (2005), p.~71--106 (electronic).

\bibitem[Bla57]{Bla}
{\scshape R.~C. Blanchfield} -- {\og Intersection theory of manifolds with
  operators with applications to knot theory\fg}, \emph{Ann. of Math. (2)}
  \textbf{65} (1957), p.~340--356.

\bibitem[FM12]{FM}
{\scshape B.~Farb {\normalfont \smfandname} D.~Margalit} -- \emph{A primer on
  mapping class groups}, Princeton Mathematical Series, vol.~49, Princeton
  University Press, Princeton, NJ, 2012.

\bibitem[GGP01]{GGP}
{\scshape S.~Garoufalidis, M.~Goussarov {\normalfont \smfandname} M.~Polyak} --
  {\og Calculus of clovers and finite type invariants of 3-manifolds\fg},
  \emph{Geom. Topol.} \textbf{5} (2001), p.~75--108 (electronic).

\bibitem[GK04]{GK}
{\scshape S.~Garoufalidis {\normalfont \smfandname} A.~Kricker} -- {\og A
  rational noncommutative invariant of boundary links\fg}, \emph{Geom. Topol.}
  \textbf{8} (2004), p.~115--204 (electronic).

\bibitem[GR04]{GR}
{\scshape S.~Garoufalidis {\normalfont \smfandname} L.~Rozansky} -- {\og The
  loop expansion of the {K}ontsevich integral, the null-move and
  {$S$}-equivalence\fg}, \emph{Topology} \textbf{43} (2004), no.~5,
  p.~1183--1210.

\bibitem[Hab00]{Hab}
{\scshape K.~Habiro} -- {\og Claspers and finite type invariants of links\fg},
  \emph{Geom. Topol.} \textbf{4} (2000), p.~1--83 (electronic).

\bibitem[Kea75]{kearton}
{\scshape C.~Kearton} -- {\og Blanchfield duality and simple knots\fg},
  \emph{Trans. Amer. Math. Soc.} \textbf{202} (1975), p.~141--160.

\bibitem[Lic97]{Lick}
{\scshape W.~B.~R. Lickorish} -- \emph{An introduction to knot theory},
  Graduate Texts in Mathematics, vol. 175, Springer-Verlag, New York, 1997.

\bibitem[Mat87]{Mat}
{\scshape S.~V. Matveev} -- {\og Generalized surgeries of three-dimensional
  manifolds and representations of homology spheres\fg}, \emph{Mat. Zametki}
  \textbf{42} (1987), no.~2, p.~268--278, 345.

\bibitem[Mou12a]{M2}
{\scshape D.~Moussard} -- {\og Finite type invariants of rational homology
  3-spheres\fg}, arXiv:1203.1603v1, to appear in Algebraic and Geometric
  Topology, 2012.

\bibitem[Mou12b]{M1}
\bysame , {\og On {A}lexander modules and {B}lanchfield forms of
  null-homologous knots in rational homology spheres\fg}, \emph{J. Knot Theory
  Ramifications} \textbf{21} (2012), no.~5, p.~1250042, 21.

\bibitem[NS03]{NS}
{\scshape S.~Naik {\normalfont \smfandname} T.~Stanford} -- {\og A move on
  diagrams that generates {$S$}-equivalence of knots\fg}, \emph{J. Knot Theory
  Ramifications} \textbf{12} (2003), no.~5, p.~717--724.

\bibitem[Tro73]{Trotter}
{\scshape H.~F. Trotter} -- {\og On {$S$}-equivalence of {S}eifert
  matrices\fg}, \emph{Invent. Math.} \textbf{20} (1973), p.~173--207.

\end{thebibliography}
\end{document}